\newtheorem{theorem}{Theorem}[section]
\newtheorem{lemma}[theorem]{Lemma}
\newtheorem{proposition}[theorem]{Proposition}
\newtheorem{corollary}[theorem]{Corollary}
\newtheorem{conjecture}[theorem]{Conjecture}
\theoremstyle{definition}
\newtheorem{definition}[theorem]{Definition}
\theoremstyle{remark}
\newtheorem{remark}[theorem]{Remark}
\numberwithin{equation}{section}
\DeclareMathOperator*{\esssup}{ess\,sup}
\newcommand{\absval}[1]{\mbox{$|#1|$}}
\def\R{{\mathbb R}}
\newcommand{\norm}[1]{\mbox{$\left\| #1 \right\|$}}
\def\Xint#1{\mathchoice
  {\XXint\displaystyle\textstyle{#1}}%
  {\XXint\textstyle\scriptstyle{#1}}%
  {\XXint\scriptstyle\scriptscriptstyle{#1}}%
  {\XXint\scriptscriptstyle\scriptscriptstyle{#1}}%
  \!\int}
\def\XXint#1#2#3{{\setbox0=\hbox{$#1{#2#3}{\int}$}
    \vcenter{\hbox{$#2#3$}}\kern-.5\wd0}}
\def\avgint{\Xint-}
\newcommand{\vertiii}[1]{{\left\vert\kern-0.25ex\left\vert\kern-0.25ex\left\vert #1 
    \right\vert\kern-0.25ex\right\vert\kern-0.25ex\right\vert}}
\numberwithin{equation}{section}
\begin{document}

\title[ Weighted Poincar\'e]{  Degenerate Poincar\'e-Sobolev inequalities }

\author{Carlos P\'erez}
\address[Carlos P\'erez]{ Department of Mathematics, University of the Basque Country, IKERBASQUE 
(Basque Foundation for Science) and
BCAM \textendash  Basque Center for Applied Mathematics, Bilbao, Spain}
\email{carlos.perezmo@ehu.es}

\author{Ezequiel Rela}
\address[Ezequiel Rela]{Department of Mathematics,
Facultad de Ciencias Exactas y Naturales,
University of Buenos Aires, Ciudad Universitaria
Pabell\'on I, Buenos Aires 1428 Capital Federal Argentina} \email{erela@dm.uba.ar}

\thanks{C. P. is supported by grant  MTM2017-82160-C2-1-P of the Ministerio de Econom\'ia y Competitividad (Spain), grant IT-641-13 of the Basque Government, and IKERBASQUE}
\thanks{E.R. is partially supported by grants UBACyT 20020130100403BA, PIP (CONICET) 11220110101018, by the Basque Government through the BERC 2014-2017 program, and by the Spanish Ministry of Economy and Competitiveness MINECO: BCAM Severo Ochoa accreditation SEV-2013-0323.}

\subjclass{Primary: 42B25. Secondary: 42B20.}

\keywords{Poincar\'e - Sobolev inequalities. Muckenhoupt weights.}

\begin{abstract}
We study weighted Poincar\'e and Poincar\'e-Sobolev type  inequalities with an explicit analysis on the dependence on the $A_p$ constants of the involved weights. We obtain inequalities of the form
\begin{equation*}
\left (\frac{1}{w(Q)}\int_Q|f-f_Q|^{q}w\right )^\frac{1}{q}\le C_w\ell(Q)\left (\frac{1}{w(Q)}\int_Q |\nabla f|^p w\right )^\frac{1}{p},
\end{equation*}
with different quantitative estimates for both the exponent $q$ and the constant $C_w$.
We will derive those estimates together with a large variety of related results as a consequence of a general selfimproving property shared by functions 
satisfying the inequality
$$
\avgint_Q |f-f_Q| d\mu \le a(Q),
$$
for all cubes $Q\subset\mathbb{R}^n$ and where $a$ is some functional that obeys a specific discrete geometrical summability condition.  We introduce a Sobolev-type exponent $p^*_w>p$ associated to the weight $w$ and obtain further improvements involving $L^{p^*_w}$ norms on the left hand side of the inequality above. For the endpoint case of $A_1$ weights we reach the classical critical Sobolev exponent $p^*=\frac{pn}{n-p}$ which is the largest possible and provide different type of quantitative estimates for $C_w$. We also show that this best possible estimate cannot hold with an exponent on the $A_1$ constant smaller than $1/p$.  

As a consequence of our results (and the method of proof) we obtain further extensions to two weights Poincar\'e inequalities and to the case of higher order derivatives. Some other related results in the spirit of the work of Keith and Zhong on the open ended condition of Poincar\'e inequality are obtained using extrapolation methods.  We also apply our method to obtain similar estimates in the scale of Lorentz spaces.

We also provide an argument based on extrapolation ideas showing that there is no $(p,p)$, $p\geq1$, Poincar\'e inequality valid for the whole class of $RH_\infty$ weights by showing their intimate connection with the failure of Poincar\'e inequalities, $(p,p)$ in the range $0<p<1$.

\end{abstract}

\maketitle

\begin{center}
\today 
\end{center}
\normalem

\section{Introduction and Main Results }

The celebrated Moser iteration method (see for instance \cite{HKM,Saloff-Coste-LMS-LN}) is a powerful and flexible devise to prove  the local H\"older regularity of the weak solutions of elliptic PDE due independently, and by different methods, to De Giorgi and Nash.  This method has  two  important key steps.  One is the $(2,2)$ Poincar\'e inequality and the other is its correspondent Poincar\'e-Sobolev  $(2^*,2)$ inequality where $2^*$ is the classical Sobolev exponent.   In \cite{FKS} it is considered this problem within the context of degenerate elliptic PDE, namely it is considered the 
operator $Lu= \text{div}(A(x) \nabla u)$ where $A$ is an $n \times n$ real symmetric matrix in $\mathbb{R}^n$ satisfying the ``degenerate" elliptic condition 
 $$  A(x)\xi.\xi  \approx |\xi|^2 w(x), $$
where the ``degeneracy" is given by a weight $w$ in the $A_2$ class. To do this it is proven in \cite{FKS} appropriate weighted Poincar\'e and Poincar\'e-Sobolev inequalities (cf. also \cite{HKM}).  One of the main purposes of this paper is to improve some of the main results from \cite{FKS}.  To be more precise we are interested in proving \emph{weighted} Poincar\'e-Sobolev inequalities of the form
\begin{equation*}
\left (\frac{1}{w(Q)}\int_Q|f-f_Q|^{q}w\right )^\frac{1}{q}\le C_w\ell(Q)\left (\frac{1}{w(Q)}\int_Q |\nabla f|^p w\right )^\frac{1}{p},
\end{equation*}
where  $1\leq p\leq q\le \infty$ and $w$ is a weight function in the $A_p$ class of Muckenhoupt (see Section \ref{sec:prelim} for the precise definitions). We will improve these results in two ways: 
1) By producing a quantitative  control of the constant  $C_w$ and 2) by producing a more precise control of the exponent $q$ as a function of $p,n$ and, often, the $A_p$ constant of the weight. To accomplish this  we will follow the general framework introduced in  \cite{FPW98} which allowed to produce in a unified manner the main results of  \cite{FKS} and many others. These results are obtained in the context of Spaces of Homogeneous Type where the underlying measure is doubling. The main results were improved  in \cite{MacManus-Perez-98}  and were also further exploited in the context of nondoubling meausures in \cite{OP-nondoubling}.  In the current  work we introduce new techniques to continue using this method with some novelties which allow to sharpen and improve the main results from the articles mentioned above.  As a consequence, we will also show that our new method contains a different proof of the John-Nirenberg estimate for generalized BMO functions in a more precise way.

Consider as a starting point an inequality of the form

\begin{equation}
\avgint_Q |f-f_Q| d\mu \le a(Q),
\end{equation}
where $a:\mathcal{Q}\to (0,\infty)$ is a general functional defined over the family of cubes in $\mathbb{R}^n$ with sides parallell to the coordinate axes, denoted by $\mathcal{Q}$,   satisfying an appropriate  extra discrete summability geometric  condition. Then there is a self-improving  phenomenon of the inequality above that allows to obtain an $L^r$ estimate for some $r>1$ depending on the choice of the functional $a$. The first result of this type was obtained by L. Saloff-Coste in \cite{Saloff-Coste-IMRN} and later on, in a the context of metric spaces with a doubling measure $(X,d,\mu)$, was obtained in \cite{HaK1} by P. Hajlasz and P. Koskela for the more standard situation, 
$$
a(B)=   \frac{r(B)}{ \mu(B)}\int_B  g d\mu.
$$
We refer to  \cite{HaK2} for a general account of the relevance of Poincar\'e type inequalities in such  general contexts.

A different and more flexible approach was introduced in \cite{FPW98}. The key point of this paper is the use of the following geometric type hypothesis that recalls  Carleson's condition.

\begin{definition} \label{def:Dp}
Let $w$ be any weight. We say that the functional $a$ satisfies the weighted $D_p$ condition for $0<p<\infty$ if there is a constant $C$ such that for any cube $Q$ and any family $\Lambda$ of pairwise disjoint subcubes of $Q$, the following inequality holds:
\begin{equation}\label{eq:Dp}
\sum_{P\in\Lambda}a(P)^pw(P)\le C^p a(Q)^pw(Q).
\end{equation}
The best possible constant $C$ above is denoted by $\|a\|$ and also we will write in this case that $a\in D_p(w)$.
\end{definition}

We include here the main theorem from \cite{FPW98} in the simplest context  which is the initial result in this theory. We will use the following notation for the  weak weighted normalized norm over a cube $Q\subset \mathbb{R}^n$.
\begin{equation*}
\|f\|_{L^{p,\infty}_{Q, \frac{wdx}{w(Q)} }}:=\sup_{\lambda>0}\lambda \left (\frac{w(\{x\in Q: |f(x)|>\lambda\})}{w(Q)}\right )^{1/p}.
\end{equation*}

\begin{theorem}[\cite{FPW98}]\label{thm:general-a(Q)}
Let $w$ be an $A_\infty$ weight and let $a$ be a functional satisfying the weighted $D_p$ condition \eqref{eq:Dp} for some $p>0$. Let $f$ be a locally integrable function such that 
\begin{equation*}
\frac{1}{|Q|}\int_Q|f-f_Q|\le  a(Q).
\end{equation*}
Then the following weak type inequality holds:
\begin{equation}\label{eq:weak-a(Q)}
\|f-f_Q\|_{L^{p,\infty}_{Q,\frac{wdx}{w(Q)}}}\le C\|a\| a(Q).
\end{equation}
\end{theorem}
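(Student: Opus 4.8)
The plan is to prove the equivalent distributional inequality: for every cube $Q$ and every $\lambda>0$,
\[
w\big(\{x\in Q:\ |f(x)-f_Q|>\lambda\}\big)\le\Big(\frac{C\,\|a\|\,a(Q)}{\lambda}\Big)^{p}w(Q),
\]
from which \eqref{eq:weak-a(Q)} follows by taking the supremum over $\lambda$. Testing \eqref{eq:Dp} with the one-element family $\Lambda=\{Q\}$ gives $\|a\|\ge1$, so the inequality is trivial when $\lambda\le\|a\|a(Q)$; hence we may assume $\lambda$ large. Truncating $f$ at a height $N$ (a $1$-Lipschitz operation, under which the hypothesis persists with $2a$ in place of $a$, and $\|2a\|=\|a\|$) we may also assume $f$ bounded, so that the left-hand side above is a priori some finite multiple of $w(Q)$; this legitimizes the bootstrap below, and the general case follows by letting $N\to\infty$.

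First I would run a Calder\'on-Zygmund stopping time. Since $\avgint_Q|f-f_Q|\le a(Q)<\lambda$, take the maximal dyadic subcubes $\{Q_j\}$ of $Q$ with $\avgint_{Q_j}|f-f_Q|>\lambda$; by maximality $\lambda<\avgint_{Q_j}|f-f_Q|\le2^{n}\lambda$, hence $|f_{Q_j}-f_Q|\le2^{n}\lambda$, and by Lebesgue differentiation $\{x\in Q:\ |f-f_Q|>\lambda\}\subseteq\bigcup_jQ_j$ off a null set. The idea is then to iterate, building a tree of cubes as in the John-Nirenberg argument. The reason the functional $a$ must enter is that at each new generation one has to recenter at the mean over the current cube $R$, and $\avgint_R|f-f_R|$ is controlled only by $a(R)$, not by $\lambda$; so at every stage I would split the selected subcubes into those on which $a$ is large (comparable to the ambient level) --- declared \emph{stopping cubes} --- and those on which it is small, which generate the next generation.

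Two mechanisms then combine. On the non-stopping part, a Chebyshev estimate together with $\avgint_R|f-f_R|\le a(R)$ shows that the offspring of $R$ occupy at most half of $|R|$, so the $m$-th generation has Lebesgue measure $\le2^{-m}|Q|$, and since $w\in A_\infty$ its $w$-measure is $\le C2^{-m\varepsilon}w(Q)$; as the mean drifts by only a controlled amount per step, the superlevel set $\{|f-f_Q|>\lambda\}$ is contained --- up to the stopping cubes --- in a generation of depth comparable to $\lambda/(\|a\|a(Q))$, giving a faster-than-polynomial tail. The stopping cubes are instead handled by the $D_p$ hypothesis: they form an antichain inside $Q$, so $\sum_{\text{stopping }P}a(P)^{p}w(P)\le\|a\|^{p}a(Q)^{p}w(Q)$; re-applying the (a priori finite) estimate inside each $P$ and summing the resulting $a(P)^{p}$ by this bound closes the recursion and supplies a contribution of the right order. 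Adding the two pieces gives the displayed distributional inequality with $C$ depending only on $n$, $p$ and the $A_\infty$ constant of $w$, and hence \eqref{eq:weak-a(Q)}.

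The delicate point --- which I expect to be the main obstacle --- is reconciling the two competing requirements on the stopping threshold: the geometric decay of the generations wants it to scale like $\|a\|a(Q)$, so that many generations precede level $\lambda$, whereas the $D_p$ estimate for the stopping cubes wants it as large as $\lambda$, so that their total weighted mass really is $\lesssim(\|a\|a(Q)/\lambda)^{p}w(Q)$ and not merely a fixed fraction of $w(Q)$. Making these compatible --- by recursing into the stopping cubes rather than discarding them, and by using that \eqref{eq:Dp} is strictly stronger than a Carleson packing condition (every antichain below $Q$, not just every dyadic generation, has $a^{p}w$-mass at most $\|a\|^{p}a(Q)^{p}w(Q)$) --- is where the argument is subtle and where the sharp first power of $\|a\|$ has to be extracted.
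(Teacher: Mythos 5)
The paper does not actually supply a proof of this theorem; it quotes it directly from \cite{FPW98}, so there is no in-paper argument against which to line up your steps. Judged on its own merits, your outline correctly identifies the three ingredients that must enter (a Calder\'on--Zygmund/John--Nirenberg tree, the $A_\infty$ absolute continuity for the non-stopping generations, and the $D_p$ antichain bound for the stopping cubes), and your remark that $\|a\|\ge 1$ and that the stopping cubes across generations form a single pairwise-disjoint family are both right. But the argument is not closed, and you essentially say so yourself in the last paragraph.

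Here is why the scheme, as written, cannot be pushed through without a new idea. In the bootstrap you implicitly work with the a priori finite quantity
\begin{equation*}
X\ :=\ \sup_{Q}\ \sup_{\lambda> a(Q)}\ \frac{\lambda^{p}\,w\bigl(\{x\in Q:\,|f-f_{Q}|>\lambda\}\bigr)}{a(Q)^{p}\,w(Q)} ,
\end{equation*}
and on each stopping cube $P$ you re-apply this very bound to $|f-f_{P}|$ at a level $c\lambda$ with $0<c<1$ (where $1-c$ accounts for the accumulated drift $|f_{P}-f_{Q}|$). Summing over the stopping antichain and invoking $D_p$ then gives
\begin{equation*}
\lambda^{p}\,w(\Omega_{\lambda})\ \le\ \frac{X}{c^{p}}\,\|a\|^{p}\,a(Q)^{p}\,w(Q)\ +\ \text{(exponentially small tail from the deep generations)},
\end{equation*}
which upon taking suprema yields an inequality of the form $X\le \kappa X+C$ with $\kappa=\|a\|^{p}/c^{p}$. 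Since $\|a\|\ge 1$ and $c<1$, one necessarily has $\kappa>1$, so this cannot be absorbed; the ``recursing into the stopping cubes'' step therefore manufactures a self-referential estimate that never contracts. This is not a bookkeeping issue but the heart of the matter: the $A_\infty$ hypothesis has to be injected \emph{inside} the stopping-cube estimate as well (for instance via a reverse-H\"older/Chebyshev gain that replaces the blunt factor $X a(P)^{p}/\lambda^{p}$ by something genuinely smaller than $w(P)$), and your sketch does not indicate how to do that while still extracting the first power of $\|a\|$ advertised in \eqref{eq:weak-a(Q)}. Until that mechanism is made precise, the proposal remains a plausible programme rather than a proof.
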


Although this result is very flexible and useful as can be shown in \cite{Saari} or \cite{PPSS},   the method does not provide a good control of the bound $C$ and $r$ specially in the weighted situation.  We also refer to the interesting paper \cite{BKM} where a connection with the  so called John-Nirenberg spaces can be found. A model example of a functional $a$ is the one defined as
\begin{equation}\label{eq:a} 
a(Q,f):=  \ell(Q)\,            \left(\frac{1}{w(Q)}\int_Q |\nabla f(x)|^p\,wdx \right)^{\frac1p}.
\end{equation}
Combining Theorem \ref{thm:general-a(Q)} with the truncation method, also called \emph{weak implies strong} (see Section \ref{truncation}), we can prove that 
\begin{equation}
\left (\frac{1}{w(Q)}\int_Q|f-f_Q|^{p}w\right )^\frac{1}{p}
\leq Ca(Q,f),
\end{equation}
 with $w\in A_p$. However, this method is not so precise since we loose control of the $A_p$ constant $[w]_{A_p}$.  
We use a different general approach which allows to prove directly
\begin{equation}\label{eq:2weightApPI p-p}
\left( \int_Q|f- f_{Q}|^{p}u\right )^\frac{1}{p}\leq c_{n} [u,v]^{1/p}_{A_p}\, \ell(Q)  \left (\int_Q |\nabla f|^p v\right )^\frac{1}{p},
\end{equation}
as a consequence of Theorem \ref{thm:Lp(w)-a(Q)-clean} below. This estimate is not so well known and can be obtained by standard methods using fractional and maximal operators, combined with the truncation method. Also, using a variant of  the proof of of Theorem \ref{thm:Lp(w)-a(Q)-clean}  we can consider generalized Poincar\'e type inequalities related to higher order derivatives. Indeed, we will show the following two weighted estimate in Corollary \ref{cor:Poincare(pp)-higher} as a consequence of Theorem \ref{thm:higher-a(Q)}:
\begin{equation}\label{eq:Poincare(pp)-higher}
\left( \int_Q|f- P_{Q}f |^{p}u\right )^\frac{1}{p}\leq c_{m,n} [u,v]^{1/p}_{A_p}\, \ell(Q)^m  \left (\int_Q |\nabla^m f|^p v\right )^\frac{1}{p},
\end{equation}
where $P_{Q}f$ is an appropriate polynomial of order $m-1$. This estimate seems not be known since the truncation method  cannot be used in the case of higher order derivatives.

To do this we will be assuming a more precise geometric hypothesis on the functional $a(Q)$ stated in Definition \ref{def:SDp}.
Here we will impose that  the functional $a$ preserves some sort of ``smallness". This variant of the $D_p$ condition will produce more refined results as those obtained in \cite{FPW98} and subsequent papers  \cite{MacManus-Perez-98, LernerPerez-poin-scandi}.

We start by introducing the notion of  ``smallness'' of a family of pairwise disjoint subcubes of a given cube $Q$.

\begin{definition}\label{def:L-small}
Let $L>1$ and let $Q$ be a cube. We will say that a family of pairwise disjoint subcubes $\{Q_i\}$ of $Q$ is $L$-small if
\begin{equation}\label{eq:L-small}
\sum_{i}|Q_i|\le \frac{|Q|}{L}.
\end{equation}
We will also denote $\{Q_i\}\in S(L)$
\end{definition}

Now, the correct notion of a $D_p$-type  condition in this context is the following.

\begin{definition}\label{def:SDp}
Let $w$ be any weight and let $s>1$. We say that the functional $a$ satisfies the weighted $SD^s_p(w)$ condition for $0\le p<\infty$ if there is a constant $C$ such that for any cube $Q$ and any family $\{Q_i\}$ of pairwise disjoint subcubes of $Q$ such that $\{Q_i\}\in S(L)$, the following inequality holds:
\begin{equation}\label{eq:SDp}
\sum_{i}a(Q_i)^pw(Q_i)\le C^p \left (\frac{1}{L}\right )^{\frac{p}{s}}a(Q)^pw(Q).
\end{equation}
The best possible constant $C$ above is denoted by $\|a\|$ and also we will write in this case that $a\in SD^s_p(w)$. We say in this case that the functional $a$ ``preserves'' the smallness condition of the family of cubes.
\end{definition}

At this point, we should present an example of a functional satisfying the $SD_p^{s}(w)$ condition. The following very general model for $a(Q)$ fulfills the requirements. Let $\mu$ be any Radon measure and define the fractional functional
\begin{equation}\label{eq:general-a(Q)}
a(Q)=\ell(Q)^{\alpha}\left(\frac{1}{w(Q)}\mu(Q) \right)^{1/p}, \qquad \alpha, p>0.
\end{equation}
More specific examples are 
\begin{equation*}\label{eq:gradients-a(Q)}
a(Q)=\ell(Q)\left(\frac{1}{w(Q)}\int_Q g\right)^{1/p},\quad \quad  a(Q)=\ell(Q)^m\left(\frac{1}{w(Q)}\int_Q |\nabla^m f|^p\right)^{1/p},
\end{equation*}
where $m=1,2,\dots$. We will include in Lemma \ref{lem:L-small} a very simple computation showing that the functional \eqref{eq:general-a(Q)}
satisfies the $SD_p^{n/\alpha}(w)$ condition.

\subsection{Generalized \texorpdfstring{$(p,p)$}{(p,p)} Poincar\'e}

Our first main result is the next theorem which is an important improvement of Theorem \ref{thm:general-a(Q)} obtaining the same inequality for the \emph{strong} norm.

\begin{theorem}\label{thm:Lp(w)-a(Q)-clean}

Let $w$ be any $A_\infty$ weight. Consider also the functional   $a$ such that for some $p\ge 1$ it satisfies the weighted condition $SD^s_p(w)$ from \eqref{eq:SDp} with $s>1$ and constant $\|a\|$. 
Let $f$ be a locally integrable function such that
\begin{equation*}
\frac{1}{|Q|}\int_{Q} |f-f_{Q}| \le a(Q),
\end{equation*}
for every cube $Q$. Then, there exists a dimensional constant $C_n$ such that for any cube $Q$
\begin{equation}\label{eq:First main estimate}
\left( \frac{1}{ w(Q)  } \int_{ Q }   |f -f_{Q}|^p     \,wdx\right)^{\frac{1}p}   \leq  C_n\, s \|a\|^s  a(Q).
\end{equation}

\end{theorem}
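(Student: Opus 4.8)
The plan is to run a Calderón–Zygmund stopping-time argument at the level of the cube $Q$, iterated dyadically, and to exploit the $SD^s_p(w)$ condition to sum a geometric-type series in the \emph{smallness parameter} $L$ rather than in the number of generations. First I would fix $Q$ and, replacing $f$ by $f-f_Q$, reduce to estimating the weighted $L^p$ average of $|f-f_Q|$ over $Q$. The engine is the weak-type inequality \eqref{eq:weak-a(Q)} of Theorem \ref{thm:general-a(Q)}: since the $SD^s_p(w)$ condition trivially implies the plain $D_p(w)$ condition (take $L$ arbitrarily close to $1$, or rather observe any disjoint family is $1$-small), we already have $\|f-f_Q\|_{L^{p,\infty}_{Q,wdx/w(Q)}}\le C\|a\|\,a(Q)$. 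The issue is upgrading weak to strong \emph{with the right power} $\|a\|^s$ and the linear-in-$s$ constant, and the honest way to get the sharp exponent is to redo the layer-cake estimate by hand using the refined $SD^s_p$ summability.

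Concretely, I would perform a Calderón–Zygmund decomposition of $f-f_Q$ at height $\lambda_0 = \kappa\, a(Q)$ inside $Q$ (with $\kappa$ a large dimensional constant), producing maximal dyadic subcubes $\{Q_j^{(1)}\}$ on which the average of $|f-f_Q|$ exceeds $\lambda_0$; on the complement $|f-f_Q|\lesssim \lambda_0$ a.e.\ by the Lebesgue differentiation theorem. The key Lebesgue-measure estimate $\sum_j |Q_j^{(1)}| \le (1/\kappa)|Q|$ says precisely that $\{Q_j^{(1)}\}\in S(\kappa)$, so the $SD^s_p(w)$ condition gives $\sum_j a(Q_j^{(1)})^p w(Q_j^{(1)}) \le \|a\|^p \kappa^{-p/s} a(Q)^p w(Q)$. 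Now iterate: inside each $Q_j^{(1)}$ repeat the decomposition relative to $f_{Q_j^{(1)}}$ at height $\kappa\, a(Q_j^{(1)})$, using the triangle inequality $|f_{Q_j^{(1)}} - f_Q| \lesssim \kappa\, a(Q)$ to keep track of the running oscillation. After $k$ generations one controls $w(\{x\in Q: |f-f_Q| > C k \kappa\, a(Q)\})$ by $\sum_{\text{gen }k} w(Q_j^{(k)}) \le (\text{const})\,\|a\|^{p}\kappa^{-p/s}\cdots$; more precisely the weight mass of the generation-$k$ cubes decays like $\bigl(\|a\|^p \kappa^{-p/s}\bigr)^{k} w(Q)$ — here one needs $a$ to behave well under the tower, which is exactly what $SD^s_p$ delivers by telescoping disjoint families up the tree. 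Choosing $\kappa$ so that $\|a\|^p\kappa^{-p/s} = 1/2$, i.e. $\kappa \simeq \|a\|^{s}$, gives $w(\{|f-f_Q|>C\|a\|^s a(Q)\,k\}) \le 2^{-k} w(Q)$, and then
\[
\frac{1}{w(Q)}\int_Q |f-f_Q|^p\, w\, dx \;=\; \frac{p}{w(Q)}\int_0^\infty \lambda^{p-1} w(\{|f-f_Q|>\lambda\})\, d\lambda \;\lesssim\; (C\|a\|^s a(Q))^p \sum_{k\ge 0} (k+1)^p\, 2^{-k}.
\]
The sum $\sum_k (k+1)^p 2^{-k}$ is $\lesssim \Gamma(p+1) \lesssim C_n$ — and this is where the factor $s$ in the statement enters if one tracks constants carefully through the number-of-generations bookkeeping, since the oscillation grows linearly in the number of steps and $s$ governs how fast $\kappa$ must be taken; taking $p$-th roots yields \eqref{eq:First main estimate}.

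The main obstacle is the bookkeeping across generations: one must verify that the generation-$k$ cubes, although nested under different parents, assemble into configurations to which $SD^s_p(w)$ applies so that the weight masses genuinely contract by the factor $\kappa^{-p/s}$ \emph{at every step}, not just the first. This requires the $SD^s_p$ inequality to be applied to the family of children inside each fixed parent and then summed over parents (a disjoint family inside $Q$), and it requires the smallness parameter to be \emph{reproduced} at each stage — i.e.\ the maximal cubes of the CZ decomposition at height $\kappa\,a(P)$ inside $P$ are again $\kappa$-small in $P$ — which holds because the CZ selection is done with respect to Lebesgue measure and the functional hypothesis $\frac{1}{|P|}\int_P|f-f_P|\le a(P)$ holds for \emph{every} cube $P$. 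A secondary subtlety is that $a$ need not be monotone, so to control $f_{Q_j^{(k)}} - f_Q$ one sums $\sum_{i=1}^{k} \kappa\, a(Q^{(i)})$ along the chain and must show this telescoping sum is still $\lesssim \kappa\, k\, a(Q)$ using the $D_p$-type control along the tower — again a consequence of \eqref{eq:SDp}.
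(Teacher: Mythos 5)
Your plan is genuinely different from the paper's argument, but as written it has several gaps that I do not think can be closed without essentially reverting to the paper's strategy.

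The paper does \emph{not} iterate the Calder\'on--Zygmund decomposition across generations. Instead it performs a \emph{single} CZ stopping at level $L$ of $|f-f_Q|/a(Q)$ inside $Q$, splits into good/bad parts \`a la Journ\'e, and bounds the contribution of the bad part by
\[
X\cdot\left(\frac{\sum_i a(Q_i)^p w(Q_i)}{a(Q)^p w(Q)}\right)^{1/p}\le X\cdot\frac{\|a\|}{L^{1/s}},
\qquad
X:=\sup_Q\left(\frac{1}{w(Q)}\int_Q\frac{|f-f_Q|^p}{a(Q)^p}\,w\,dx\right)^{1/p}.
\]
This is the Journ\'e/John--Nirenberg ``absorption'' trick: the entire recursion is swallowed into the finite supremum $X$, and one then solves the scalar inequality $X\le 2^nL+X\|a\|/L^{1/s}$ by taking $L\simeq\|a\|^s$, producing the factor $s\|a\|^s$. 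Crucially, proving that $X<\infty$ is itself nontrivial and requires a preliminary pass with the perturbed functional $a_\varepsilon=a+\varepsilon$ and the $A_\infty$ reverse H\"older inequality (Lemma~\ref{lem:SDps-a(Q)+eps}); the clean quantitative estimate is then obtained in a second pass with $a$ itself.

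Your generation-by-generation layer-cake plan hits at least three real obstacles, of which you name only one. (i) You claim $w(\{|f-f_Q|>Ck\kappa\,a(Q)\})\le 2^{-k}w(Q)$. What the iterated $SD^s_p(w)$ estimate actually gives is $\sum_{\text{gen }k}a(Q^{(k)})^p w(Q^{(k)})\le(\|a\|^p\kappa^{-p/s})^k a(Q)^p w(Q)$. To pass from that to $\sum_{\text{gen }k}w(Q^{(k)})\le 2^{-k}w(Q)$ you would need a lower bound $a(Q^{(k)})\gtrsim a(Q)$, which $SD^s_p$ does not supply and which fails for the model Poincar\'e functionals (where $a$ \emph{decreases} on subcubes). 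Appealing instead to the Lebesgue-smallness $|\text{gen }k|\le\kappa^{-k}|Q|$ plus $A_\infty$ would introduce a $[w]_{A_\infty}$-dependent constant, which the theorem explicitly does not have. (ii) The cumulative oscillation issue you flag as ``secondary'' is not secondary: along a chain $Q=Q^{(0)}\supset\cdots\supset Q^{(k)}$ one has $|f_{Q^{(k)}}-f_Q|\le 2^n\kappa\sum_{i<k}a(Q^{(i)})$, and the single-cube instance of \eqref{eq:SDp} gives only $a(P)\le\|a\|L^{-1/s}(w(Q)/w(P))^{1/p}a(Q)$, where the weight ratio is unbounded; there is no ``$D_p$-type telescoping'' that turns this into $\lesssim k\,a(Q)$. (iii) Even a corrected $L^p$-iteration (expand $\int_Q|f-f_Q|^p w$ over the tree and sum the geometric series) needs an a priori finiteness argument to show the tail term vanishes, and you do not address this; the paper's $a_\varepsilon$ approximation is exactly what supplies it. In short, the key missing idea is the self-improving absorption through the finite supremum $X$: once you have that, a single CZ pass suffices and the layer-cake machinery becomes unnecessary.
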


\begin{remark}
An important point is that the $A_{\infty}$ constant does not appear in the result, estimate \eqref{eq:First main estimate}, even though it is 
assumed that the weight $w$ is an $A_\infty$ weight. We believe that the result holds without assuming any condition on the weight $w$.
\end{remark}

Note that obtaining the strong inequality is relevant,  since there is no need to use the truncation method and then,  many other  functionals beyond the case of the gradient can be considered. As an example, we can derive the $(p,p)$ Poincar\'e inequality \eqref{eq:2weightApPI p-p} directly, in a different way than the one presented in Proposition \ref{pro:two-weights} which requires fractional and maximal operators.

We now present several corollaries of this first main theorem.

\begin{corollary}\label{cor:Poincare(p,p)-for measures}
Let $p>1$ and $\alpha>0$. Let $\mu$ be  a measure and let $w$ be an $A_\infty$ weight. Suppose that $f$ is a locally integrable function such that for some constant $a>0$, we have 
\begin{equation*}
\frac{1}{|Q|}\int_{Q} |f-f_{Q}| \leq a\, \ell(Q)^{\alpha}\left(\frac{1}{w(Q)}\mu(Q) \right)^{1/p},
\end{equation*}
for every cube $Q$. Then 
$$
\left (\frac{1}{w(Q)}\int_Q |f-f_Q|^p\, w\ dx\right )^{1/p}\le \frac{c_n a}{\alpha} \ell(Q)^{\alpha}\left(\frac{1}{w(Q)}\mu(Q) \right)^{1/p},
$$
where $c_n$ is a dimensional constant.
\end{corollary}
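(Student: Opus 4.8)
The plan is to read the hypothesis as the model situation already covered by Theorem~\ref{thm:Lp(w)-a(Q)-clean} and then merely track constants. Introduce the functional
\[A(Q):=a\,\ell(Q)^{\alpha}\left(\frac{\mu(Q)}{w(Q)}\right)^{1/p},\]
so that the assumption on $f$ is exactly $\frac{1}{|Q|}\int_Q|f-f_Q|\le A(Q)$ for every cube $Q$. Everything then reduces to verifying that $A$ belongs to $SD^s_p(w)$ for an appropriate $s>1$ with a controlled norm, which is the content of Lemma~\ref{lem:L-small} applied to the model functional \eqref{eq:general-a(Q)}.

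For completeness I would redo that short computation. Fix a cube $Q$ and an $L$-small family $\{Q_i\}$ of pairwise disjoint subcubes, i.e.\ $\sum_i|Q_i|\le |Q|/L$. Since $\ell(Q_i)^{\alpha p}=|Q_i|^{\alpha p/n}$ and the cubes are disjoint subsets of $Q$,
\[\sum_i A(Q_i)^pw(Q_i)=a^p\sum_i|Q_i|^{\alpha p/n}\mu(Q_i)\le a^p\Big(\sup_i|Q_i|\Big)^{\alpha p/n}\sum_i\mu(Q_i)\le a^p\left(\frac{|Q|}{L}\right)^{\alpha p/n}\mu(Q),\]
and the right-hand side equals $L^{-\alpha p/n}A(Q)^pw(Q)$. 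Hence $A$ satisfies \eqref{eq:SDp} with exponent $s=n/\alpha$ and constant $\|A\|\le 1$; the scalar $a$ drops out because \eqref{eq:SDp} is homogeneous of degree $p$ in the functional. If $\alpha<n$ this exponent is admissible ($s>1$); if $\alpha\ge n$ the same inequality shows $A\in SD^s_p(w)$ with $\|A\|\le1$ for every $s\ge n/\alpha$, in particular for any fixed $s>1$.

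It remains to invoke Theorem~\ref{thm:Lp(w)-a(Q)-clean} with this functional $A$, with the given exponent ($p>1$, so in particular $p\ge1$), with $s=n/\alpha$, and using $w\in A_\infty$. It yields
\[\left(\frac{1}{w(Q)}\int_Q|f-f_Q|^p\,w\,dx\right)^{1/p}\le C_n\,s\,\|A\|^s\,A(Q)\le C_n\,\frac{n}{\alpha}\,A(Q)=\frac{c_n a}{\alpha}\,\ell(Q)^{\alpha}\left(\frac{\mu(Q)}{w(Q)}\right)^{1/p}\]
with $c_n:=C_n n$, which is exactly the asserted estimate; in the range $\alpha\ge n$ one uses a fixed $s>1$ instead and obtains the same inequality with an absolute constant in place of $n/\alpha$.

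I do not expect a genuine obstacle: this is a corollary, and the only real task is the bookkeeping. The point worth emphasizing is that the $SD^{n/\alpha}_p(w)$ verification is purely geometric (it uses only disjointness, $L$-smallness, and $\ell(Q)^n=|Q|$, together with $\mu$ being a measure), so the $A_\infty$ hypothesis on $w$ enters exclusively through Theorem~\ref{thm:Lp(w)-a(Q)-clean}; and it is precisely the exact value $s=n/\alpha$ of the exponent, combined with the unit constant in the $SD$ condition, that makes the factor $s$ in \eqref{eq:First main estimate} come out as the clean $1/\alpha$ dependence claimed in the corollary.
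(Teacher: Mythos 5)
Your proof is correct and follows exactly the route the paper intends: identify the hypothesis with the model functional \eqref{eq:general-a(Q)} (with $\alpha$ general and the scalar $a$ absorbed), verify the $SD^{n/\alpha}_p(w)$ condition as in Lemma~\ref{lem:L-small}, and apply Theorem~\ref{thm:Lp(w)-a(Q)-clean} with $s=n/\alpha$ to read off the $c_n/\alpha$ constant.

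Two small observations. First, your verification of the smallness condition is actually a bit cleaner than Lemma~\ref{lem:L-small}: instead of H\"older's inequality you pull out $\bigl(\sup_i|Q_i|\bigr)^{\alpha p/n}\le (|Q|/L)^{\alpha p/n}$, which works uniformly in $\alpha$ and avoids the case split on whether $\alpha p/n<1$. Second, you correctly flag that the hypothesis $s>1$ of Theorem~\ref{thm:Lp(w)-a(Q)-clean} limits the clean $n/\alpha$ bound to the range $\alpha<n$; for $\alpha\ge n$ the theorem with a fixed $s>1$ gives a dimensional constant rather than $c_n/\alpha$. This discrepancy is in the paper's statement rather than in your argument (and in the unweighted model the hypothesis already forces $f$ to be constant for $\alpha>1$, so the range where $c_n/\alpha$ improves on a dimensional constant is not where the corollary has content); if you wanted to be fully faithful to the asserted constant you would need to note this restriction or observe, as you implicitly do, that $c_n/\alpha$ and a dimensional constant differ only by the factor $\max\{1,\alpha/n\}$, which is bounded on the range of interest.
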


As a consequence we derive the following  two weight Poincar\'e inequality where we don't use the truncation method.

\begin{corollary}\label{cor:Poincare(p,p)-twoweight}
Let $(u,v)\in A_p$,  $u\in A_\infty$. Then the following Poincar\'e $(p,p)$ inequality holds
$$
\left (\frac{1}{u(Q)}\int_Q |f-f_Q|^p\, u\ dx\right )^{1/p}\le c_n [u,v]^{\frac{1}{p}}_{A_p}\ell(Q)\left (\frac{1}{u(Q)}\int_Q|\nabla f|^{p} \, v \ dx\right )^{1/p},
$$
where $c_n$ is a dimensional constant.
\end{corollary}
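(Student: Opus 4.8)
The plan is to obtain this directly from Corollary~\ref{cor:Poincare(p,p)-for measures}, applied with the weight $w=u$, the exponent $\alpha=1$, and the measure $d\mu=|\nabla f|^p\,v\,dx$. For these choices the conclusion of that corollary reads
$$
\left(\frac{1}{u(Q)}\int_Q|f-f_Q|^p\,u\,dx\right)^{1/p}\le c_n\,a\,\ell(Q)\left(\frac{1}{u(Q)}\int_Q|\nabla f|^p\,v\,dx\right)^{1/p},
$$
so everything reduces to producing an admissible constant $a$, i.e.\ to verifying the hypothesis
$$
\frac{1}{|Q|}\int_{Q}|f-f_{Q}|\,dx\le a\,\ell(Q)\left(\frac{1}{u(Q)}\int_Q|\nabla f|^p\,v\,dx\right)^{1/p}
$$
for every cube $Q$, with $a$ of the size $c_n[u,v]_{A_p}^{1/p}$.

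To do so I would start from the classical unweighted $(1,1)$ Poincar\'e inequality on a cube,
$$
\frac{1}{|Q|}\int_{Q}|f-f_{Q}|\,dx\le c_n\,\ell(Q)\,\frac{1}{|Q|}\int_Q|\nabla f|\,dx,
$$
and then, writing $|\nabla f|=\bigl(|\nabla f|\,v^{1/p}\bigr)\,v^{-1/p}$ and applying H\"older's inequality with exponents $p$ and $p'=p/(p-1)$ (assume for now $p>1$), bound
$$
\frac{1}{|Q|}\int_Q|\nabla f|\,dx\le\left(\frac{1}{|Q|}\int_Q|\nabla f|^p\,v\,dx\right)^{1/p}\left(\frac{1}{|Q|}\int_Q v^{\,1-p'}\,dx\right)^{1/p'}.
$$
The two-weight condition $(u,v)\in A_p$ gives $\bigl(\frac{1}{|Q|}\int_Q v^{\,1-p'}\,dx\bigr)^{p-1}\le[u,v]_{A_p}\bigl(\frac{1}{|Q|}\int_Q u\,dx\bigr)^{-1}$; raising this to the power $\frac{1}{p-1}$ and using the identity $\frac{1}{p'}\cdot\frac{1}{p-1}=\frac1p$ bounds the last factor in the previous display by $[u,v]_{A_p}^{1/p}\bigl(|Q|/u(Q)\bigr)^{1/p}$. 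Substituting back, every power of $|Q|$ cancels and one is left with exactly the required estimate with $a=c_n[u,v]_{A_p}^{1/p}$; Corollary~\ref{cor:Poincare(p,p)-for measures}, whose standing hypothesis $u\in A_\infty$ is precisely the $A_\infty$ assumption made here, then closes the argument.

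I do not expect a genuine obstacle here: the only real content is one application of H\"older combined with the defining inequality of the two-weight $A_p$ class, and the single point needing care is the exponent bookkeeping, arranged so that the power of $[u,v]_{A_p}$ comes out exactly $1/p$ and all factors of $|Q|$ disappear. The only remaining subtlety is the endpoint $p=1$, where H\"older is replaced by the elementary bound $\int_Q|\nabla f|\,dx\le(\essinf_Q v)^{-1}\int_Q|\nabla f|\,v\,dx$ together with the $A_1$ inequality $\frac{1}{|Q|}\int_Q u\,dx\le[u,v]_{A_1}\,\essinf_Q v$; this again produces the hypothesis above with $a=c_n[u,v]_{A_1}$, and one then concludes as before --- invoking Theorem~\ref{thm:Lp(w)-a(Q)-clean} directly, with the functional $a(Q)=\ell(Q)\,\mu(Q)/u(Q)\in SD^n_1(u)$, if one prefers to bypass the restriction $p>1$ implicit in Corollary~\ref{cor:Poincare(p,p)-for measures}.
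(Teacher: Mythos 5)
Your proof is correct and essentially follows the paper's own argument: start from the unweighted $(1,1)$ Poincar\'e inequality, pass to the right-hand side involving $v$ via the two-weight $A_p$ estimate (your H\"older computation reproduces exactly the inequality \eqref{two weight Ap charact} that the paper cites as known), and then invoke the general self-improving theorem through the functional $a(Q)=\ell(Q)\bigl(\frac{1}{u(Q)}\int_Q|\nabla f|^p v\bigr)^{1/p}$. Routing through Corollary~\ref{cor:Poincare(p,p)-for measures} rather than directly through Theorem~\ref{thm:Lp(w)-a(Q)-clean} is a cosmetic difference, and your remark on the $p=1$ endpoint is a sensible (and correct) observation about the stated hypothesis $p>1$ in that corollary.
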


The proof of Theorem \ref{thm:Lp(w)-a(Q)-clean} was inspired by the beautiful argument used by Journ\'e in \cite{Journe} to prove John-Nirenberg's theorem. In fact, from our Theorem \ref{thm:Lp(w)-a(Q)-clean} we can derive the following corollary, which easily implies the celebrated John-Nirenberg's inequality with a different argument.

\begin{corollary}\label{cor:pre-JN}
Let $f$ be a locally integrable function and suppose that there is an {\bf increasing} functional $a$ such that 
\begin{equation*}
\frac{1}{|Q|}\int_{Q} |f-f_{Q}| \leq a(Q),
\end{equation*}
for every cube $Q$. Then, there exists a dimensional constant $c_n$ such that for any cube $Q$
\begin{equation*}
\norm{f -f_{Q}}_{\exp L(Q, \frac{dx}{|Q|})} \leq c_n\,a(Q)
\end{equation*}
\end{corollary}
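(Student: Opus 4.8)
The plan is to derive this directly from Theorem~\ref{thm:Lp(w)-a(Q)-clean} applied with the trivial weight $w\equiv 1$ (which of course lies in $A_\infty$): first extract from it a whole scale of $L^{k}$ moment estimates whose constants grow only linearly in $k$, and then sum the resulting exponential power series.

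First I would check that an \emph{increasing} functional (meaning $a(P)\le a(Q)$ whenever $P\subseteq Q$) automatically satisfies the strongest smallness-preserving condition with respect to Lebesgue measure. Indeed, if $\{Q_i\}$ is a family of pairwise disjoint subcubes of $Q$ with $\{Q_i\}\in S(L)$ as in Definition~\ref{def:L-small}, that is $\sum_i|Q_i|\le |Q|/L$, then monotonicity gives
\[
\sum_i a(Q_i)^{p}|Q_i|\le a(Q)^{p}\sum_i|Q_i|\le \frac1L\,a(Q)^{p}|Q|.
\]
Taking $s=p$, so that $(1/L)^{p/s}=1/L$, this is exactly the condition \eqref{eq:SDp}, i.e.\ $a\in SD^{p}_{p}(1)$ with $\|a\|\le 1$, and this holds for \emph{every} $p>1$.

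Next I would invoke Theorem~\ref{thm:Lp(w)-a(Q)-clean} with $w\equiv1$ and the pair $(p,s)=(k,k)$ for each integer $k\ge 2$; since $w\equiv1$ the averages there are taken with respect to $dx/|Q|$, and because $\|a\|\le1$ the conclusion reads
\[
\left(\frac{1}{|Q|}\int_Q|f-f_Q|^{k}\,dx\right)^{1/k}\le C_n\,s\,\|a\|^{s}\,a(Q)\le C_n\,k\,a(Q),\qquad k=2,3,\dots,
\]
with $C_n$ a dimensional constant which we may assume to satisfy $C_n\ge1$, so that the case $k=1$ is simply the hypothesis. Recalling that for a probability measure $\mu$ the bound $\|g\|_{\exp L(\mu)}\le c$ is equivalent to $\int\exp(|g|/c)\,d\mu\le2$, I would then expand the exponential, insert the moment bounds termwise, and use $k^{k}\le e^{k}k!$:
\[
\frac{1}{|Q|}\int_Q\exp\!\left(\frac{|f-f_Q|}{c_n\,a(Q)}\right)dx\le 1+\sum_{k\ge1}\frac{(C_n\,k\,a(Q))^{k}}{k!\,(c_n\,a(Q))^{k}}\le 1+\sum_{k\ge1}\left(\frac{C_n\,e}{c_n}\right)^{k},
\]
which is $\le2$ as soon as $c_n=2eC_n$, giving $\left\|f-f_Q\right\|_{\exp L(Q,\frac{dx}{|Q|})}\le c_n\,a(Q)$; the classical John--Nirenberg inequality then follows by specializing to the constant functional $a(Q)\equiv\|f\|_{BMO}$.

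The only genuinely delicate step is that Theorem~\ref{thm:Lp(w)-a(Q)-clean} has to be applied uniformly over all exponents $k$ with one and the same functional, and this is precisely where the monotonicity hypothesis pays off: it forces $\|a\|\le1$ independently of $k$, so that the sole $k$-dependence of the right-hand side of the moment estimate is the harmless linear factor $s=k$, which the factorial in the exponential series absorbs without difficulty. Everything else is routine; in particular the $A_\infty$ constant of $w\equiv1$ equals $1$ and disappears, consistently with the remark following Theorem~\ref{thm:Lp(w)-a(Q)-clean}.
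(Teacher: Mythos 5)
Your proof is correct and follows essentially the same route as the paper: observe that monotonicity of $a$ yields $a\in SD^p_p(1)$ with $\|a\|\le 1$, apply Theorem~\ref{thm:Lp(w)-a(Q)-clean} with $w\equiv1$ and $s=p$ to get $\|f-f_Q\|_{L^p(Q,\,dx/|Q|)}\le C_n p\,a(Q)$, and then pass from linear growth of $L^p$-norms to exponential integrability. The only difference is that the paper cites the $L^p$-to-$\exp L$ extrapolation as a known fact, while you verify it directly by expanding the exponential series and using $k^k\le e^k k!$; this is a harmless expansion of the same argument.
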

 Here we used the usual Orlicz type norm: 
$$
\norm{g}_{\exp L(Q, \frac{dx}{|Q|})} =\inf \{\lambda>0\,:\, \frac{1}{|Q|}\int_Q\Phi \left (\frac{g(x)}{\lambda }\right )\,dx\leq 1 \}    \qquad g \geq 0
$$
with $\Phi(t)=e^t -1       $

For the proof we observe readily that if $a$ is increasing, namely  $P\subset Q$ implies $a(P)\leq a(Q)$, then $a$ satisfies the unweighted $SD^p_p$ 
for any $p>0$ with $\|a\| \leq 1$. In particular if $1<p<\infty$, by \eqref{eq:First main estimate} in Theorem \ref{thm:Lp(w)-a(Q)-clean}, there exists a dimensional constant $c_n$ such that for any cube $Q$,
\begin{equation*}
\left( \frac{1}{ |Q|  } \int_{ Q }   |f -f_{Q}|^p     \,dx\right)^{\frac{1}p}   \leq  c_n \,p\,  a(Q). 
\end{equation*}
Now, we use the following well know estimate: let  $(X,\mu)$ be a probability space and let a function $g$  such that for some $p_0\geq 1$, $c>0$, and $\alpha>0$ we have that 
$$
\norm{g}_{L^p(X,\mu)} \leq c\,p^{\alpha} \qquad p\geq p_0.
$$
Then for a universal multiple of $c$,  
$$
\norm{g}_{\exp (L)^{\frac{1}{\alpha}},(X,\mu)} \leq c.\,
$$
We conclude by considering  $X=Q$, $d\mu=\frac{dx}{|Q|}$,   $g=\frac{|f-f_Q|}{a(Q)}$ and $\alpha=1$. Now, specializing the above corollary for the case $a\equiv 1$, we recover John-Nirenberg's theorem.

\begin{remark}

A variant of the method we use also produce the following weighted estimate 
\begin{equation*}
\norm{f -f_{Q}}_{\exp L(Q, \frac{wdx}{w(Q)} )} \leq C_n\,\|f\|_{BMO}\, [w]_{A_{\infty}}\,a(Q) 
\end{equation*}
when $a$ is an increasing functional.
\end{remark}

As a consequence of Theorem \ref{thm:Lp(w)-a(Q)-clean}, using ideas from extrapolation theory from  \cite{Duo} \cite{CMP-Book}    and we are able to derive in Corollary \ref{cor:K-Z phenomenon} a result in the spirit of the celebrated theorem of Keith and Zhong \cite{KZ} on the open ended property of Poincar\'e inequalities. The proof of this corollary will be presented in Section \ref{sec:KZ}.

\begin{corollary}\label{cor:K-Z phenomenon}
Let $w\in A_{p_0}$ and $1<p_0<\infty$ and let also $\varphi: [1,\infty)\to (0,\infty)$ be non-decreasing. Let  $(f,g)$ be a couple of functions  satisfying  the following Poincar\'e $(1,p_0)$  for any $w\in A_{p_0}$, 
\begin{equation}\label{eq:KZ-hyp}
\frac{1}{|Q|}\int_Q|f-f_Q|dx \leq \varphi([w]_{A_{p_0}})  \ell(Q) \left( \frac{1}{w(Q)} \int_Q g^{p_0}\,wdx \right )^{\frac{1}{p_0}},
\end{equation}

Then, for any $p$ such that $1<p< p_0$ the following estimate holds for any $w\in A_p$: 
\begin{equation*}
\left (\frac{1}{w(Q)}  \int_Q|f-f_{Q}|^{p} \,wdx  \right )^\frac{1}{p}\leq c\,\varphi( c_{p,p_0,n}\,  [w]_{A_{p}}^{\frac{p_{0}-1}{p-1}} )
  \ell(Q) \left(\frac{1}{w(Q)}   \int_Q g^{p}\,wdx  \right)^{\frac{1}{p}} 
\end{equation*}
where $c$ is a constant depending on $p,p_0$ and the dimension.
\end{corollary}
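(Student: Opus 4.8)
The plan is to derive Corollary \ref{cor:K-Z phenomenon} from Theorem \ref{thm:Lp(w)-a(Q)-clean} together with Rubio de Francia extrapolation. The hypothesis \eqref{eq:KZ-hyp} says that the functional
\begin{equation*}
a(Q) = \varphi([w]_{A_{p_0}})\,\ell(Q)\left(\frac{1}{w(Q)}\int_Q g^{p_0}\,w\,dx\right)^{1/p_0}
\end{equation*}
controls the oscillation of $f$ in $L^1$-average over every cube. The first step is to set up the extrapolation: fix $p$ with $1<p<p_0$ and a weight $w\in A_p$. I want to produce the $(p,p)$ conclusion by an extrapolation-type argument that, roughly, ``raises'' the integrability of $g$ at the cost of replacing $w$ by a suitable auxiliary weight. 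The natural move is to write, for a given $w\in A_p$, a factorization or a limited-range extrapolation passing from the initial $(1,p_0)$ inequality (valid for all $A_{p_0}$ weights) to a $(1,p)$-type inequality for all $A_p$ weights, keeping explicit track of how the $A_{p_0}$ constant of the auxiliary weight is bounded by a power of $[w]_{A_p}$. Concretely, given $w \in A_p$, one constructs (via the Rubio de Francia algorithm applied to the Hardy--Littlewood maximal operator on an appropriate measure/exponent, or via Jones' factorization $w = w_1 w_2^{1-p}$ with $w_1,w_2\in A_1$) a weight $\widetilde{w}$ with $w\,dx$ absolutely continuous with respect to $\widetilde w\,dx$ in a controlled way and with $[\widetilde w]_{A_{p_0}} \lesssim_{p,p_0,n}[w]_{A_p}^{(p_0-1)/(p-1)}$; this exponent $(p_0-1)/(p-1)$ is exactly the one appearing in the statement and comes from the standard sharp extrapolation bookkeeping (see \cite{Duo,CMP-Book}).

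Once the auxiliary weight $\widetilde w$ is in hand, the second step is to apply the hypothesis \eqref{eq:KZ-hyp} with $\widetilde w$ in place of $w$, so that with
\begin{equation*}
\widetilde a(Q) = \varphi([\widetilde w]_{A_{p_0}})\,\ell(Q)\left(\frac{1}{\widetilde w(Q)}\int_Q g^{p_0}\,\widetilde w\,dx\right)^{1/p_0}
\end{equation*}
we have $\frac{1}{|Q|}\int_Q|f-f_Q|\,dx\le\widetilde a(Q)$ for every cube. The third step is to verify that this functional $\widetilde a$ satisfies the $SD^s_p(\widetilde w)$ condition of Definition \ref{def:SDp} for an appropriate $s>1$. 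This is the analogue of Lemma \ref{lem:L-small}: the functional $\ell(Q)\,(\mu(Q)/\widetilde w(Q))^{1/p_0}$ with $\mu = g^{p_0}dx$ is of the general form \eqref{eq:general-a(Q)} with $\alpha = 1$ but with the exponent $p_0$ rather than $p$ in the root. Because $p < p_0$, raising to the $p$-th power and summing over an $L$-smallness family is \emph{easier} than the borderline case, so one obtains $\widetilde a\in SD^{s}_p(\widetilde w)$ with, say, $s = n$ (any $s>1$ compatible with the computation works, and the constant $\|\widetilde a\|$ is dimensional); here one uses that $\widetilde w\in A_\infty$, which holds since $\widetilde w\in A_{p_0}$, and one may also invoke Hölder with exponent $p_0/p>1$ on the measure $\mu$ against $\widetilde w$ to pass from the $L^{p_0}$-average of $g$ to an $L^p$-average, absorbing the loss into the constant. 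Then Theorem \ref{thm:Lp(w)-a(Q)-clean} applies verbatim and yields
\begin{equation*}
\left(\frac{1}{\widetilde w(Q)}\int_Q|f-f_Q|^p\,\widetilde w\,dx\right)^{1/p}\le C_n\,s\,\|\widetilde a\|^s\,\widetilde a(Q).
\end{equation*}

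The final step is to undo the extrapolation: translate the $L^p(\widetilde w)$ estimate back into the desired $L^p(w)$ estimate. In the standard extrapolation scheme this is automatic — the conclusion of the Rubio de Francia machinery is precisely that an inequality of the form ``$(p,p)$ with constant $\Phi([\,\cdot\,]_{A_p})$ for every $A_p$ weight'' follows from the corresponding ``$(p_0,p_0)$ for every $A_{p_0}$ weight'' provided $\Phi$ is non-decreasing; applying this with the pair $(f,g)$ and unwinding the constant gives
\begin{equation*}
\left(\frac{1}{w(Q)}\int_Q|f-f_Q|^p\,w\,dx\right)^{1/p}\le c\,\varphi\!\left(c_{p,p_0,n}\,[w]_{A_p}^{(p_0-1)/(p-1)}\right)\ell(Q)\left(\frac{1}{w(Q)}\int_Q g^p\,w\,dx\right)^{1/p},
\end{equation*}
since $\widetilde a(Q)$ is $\varphi([\widetilde w]_{A_{p_0}})$ times the right-hand-side expression and $\varphi$ non-decreasing lets us replace $[\widetilde w]_{A_{p_0}}$ by its upper bound $c_{p,p_0,n}[w]_{A_p}^{(p_0-1)/(p-1)}$. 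The main obstacle I expect is the bookkeeping in the extrapolation step: one must run the Rubio de Francia iteration \emph{off-diagonal} (from exponent $p_0$ down to $p$ while keeping the gradient term's exponent tied to the weight-averaging exponent) and extract the precise power $(p_0-1)/(p-1)$ on the $A_p$ constant, which requires the sharp quantitative form of extrapolation rather than its qualitative version; a secondary technical point is checking that the $SD^s_p$ constant of $\widetilde a$ is genuinely dimensional (independent of $[\widetilde w]_{A_\infty}$), which is what makes the final constant depend on $\varphi$ only through the stated argument and not through an extra weight-dependent factor.
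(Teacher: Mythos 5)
Your high-level plan---combine Theorem \ref{thm:Lp(w)-a(Q)-clean} with Rubio de Francia extrapolation---is the right idea, but the logical order of the two ingredients is reversed in a way that does not close, and the intermediate claim about the $SD$ condition is not the one the argument needs.

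The paper's proof first applies Theorem \ref{thm:Lp(w)-a(Q)-clean} \emph{at the exponent $p_0$}: since $a(Q)=\varphi([w]_{A_{p_0}})\,\ell(Q)\,(\mu(Q)/w(Q))^{1/p_0}$ with $\mu=g^{p_0}dx$ belongs to $SD^n_{p_0}(w)$ by Lemma \ref{lem:L-small} (the root exponent and the $SD$-exponent match), one obtains the full $(p_0,p_0)$ Poincar\'e for every $A_{p_0}$ weight, with constant $\varphi([w]_{A_{p_0}})$. Only \emph{then} does extrapolation enter, and it enters concretely: for $w\in A_p$ and the specific function $g$, one inserts $R(\chi_Q g)^{-\alpha p}R(\chi_Q g)^{\alpha p}$ (with $R$ the Rubio de Francia operator built from $M$ on $L^p(w)$ and $\alpha=(p_0-p)/p_0$) and applies H\"older with exponents $p_0/p$ and $(p_0/p)'$. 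The factor $I$ is handled by the $(p_0,p_0)$ inequality applied to the $g$-dependent weight $v=R(\chi_Q g)^{-(p_0-p)}w\in A_{p_0}$ together with the pointwise bound $g\le R(\chi_Q g)$; the factor $II$ is handled by the $L^p(w)$-boundedness of $R$. Both the construction of the auxiliary $A_{p_0}$ weight from $g$ and the two-sided H\"older split are indispensable, and your ``undo the extrapolation'' step is precisely this machinery under a different name---it is not a separate automatic step.

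Your proposed route has two concrete gaps. First, you want to apply Theorem \ref{thm:Lp(w)-a(Q)-clean} directly at exponent $p<p_0$, claiming $\widetilde a\in SD^s_p(\widetilde w)$ because ``raising to the $p$-th power is easier.'' This is not automatic: one is summing $|Q_i|^{p/n}\mu(Q_i)^{p/p_0}\widetilde w(Q_i)^{1-p/p_0}$, and the extra positive power $1-p/p_0$ of $\widetilde w(Q_i)$ does not superadditivize; a three-factor H\"older needs $p/n+p/p_0\le 1$, which can fail, so the claim requires a genuine argument and is in any case not the condition the paper's proof uses. Second, even granting an $L^p(\widetilde w)$ estimate, you cannot pass from $L^p(\widetilde w)$ to $L^p(w)$ by any monotonicity or ``unwinding'': the two weights are not comparable (indeed $v=R(\chi_Q g)^{-(p_0-p)}w\le w$ only where $R(\chi_Q g)\ge 1$), and the right-hand side of the $\widetilde w$-estimate still carries $g^{p_0}$ with the wrong exponent. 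What bridges the gap is exactly the H\"older factorization with the two $R^{\pm\alpha p}$ factors, one feeding the weight change and the other feeding the $L^p(w)$ bound for $R$. In short: you have the right tools in mind, but you need to obtain the self-improved $(p_0,p_0)$ inequality \emph{first} and then carry out the quantitative Rubio de Francia argument in full; as written, the proposal neither establishes the $SD^s_p$ condition it relies on nor explains how the weight change is undone.
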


\subsection{Generalized \texorpdfstring{$(p_w^*,p)$}{p*w,p} Poincar\'e-Sobolev}

In this section we want to move further in the direction of Poincar\'e-Sobolev inequalities. In the case of Lebesgue measure and for the particular case of the functional given by the gradient, the critical index is $p^*=\frac{pn}{n-p}>p$ (see \cite{Sobolev,Gagliardo,Morrey}).

Our next results provide further improvements for $(p_{w}^*,p)$ Poincar\'e type inequalities for $A_q$ weights, $1\leq q\leq p$. Here we introduce the Sobolev index $p_{w}^* >p$ to
obtain a wider range of exponents. In addition, we obtain sharper estimates on the dependence on the $A_q$ constants. We will obtain inequalities of the form
\begin{equation*}
\left \| f-f_Q\right \|_{L^{p_{w}^*}{(Q,\frac{wdx}{w(Q)})}}\le C_n \varphi(w)\ell(Q) \left \|\nabla f\right \|_{L^{p}{(Q,\frac{wdx}{w(Q)})}},
\end{equation*}
where the exponent $p_{w}^*$ depends on the weight $w\in A_q$. Note that we fix the value of $p$ on the right hand side and pursue the best possible exponent $p_{w}^*$. There will be some sort of balance between our best $p_{w}^*$ and the sharper quantitative estimate for $\varphi(w)$.  The main difficulty to overcome is to obtain the ``smallness preservation'' for the functional $a$ when dealing with higher exponents. In that case, it will be crucial to use some extra geometric consequences of the membership of the weight into the $A_q$ class. 

We have the following theorem.

\begin{theorem} \label{thm:ptimes-Aq}
Let $1 \leq p < n $ and let $w\in A_q$ with $1\le q\le p$. Let also $p_w^* $ be defined by 
\begin{equation}\label{eq:ptimes-Aq}
\frac{1}{p} -\frac{1}{ p_w^* }=\frac{1}{n(q+\log [w]_{A_q})}.
\end{equation}

Let $a$ be the functional defined by
$$a(Q)=\ell(Q)\left(\frac{1}{w(Q)}\mu(Q) \right)^{1/p},
$$
where $\mu $ is any Radon measure. Suppose that  $f$ satisfies 
\begin{equation}
\frac{1}{|Q|}\int_{Q} |f-f_{Q}| \le a(Q)
\end{equation}
for every cube $Q$. Then, there exists a dimensional constant $C$ such that for any cube $Q$
\begin{equation*}
\left( \frac{1}{ w(Q)  } \int_{ Q }   |f -f_{Q}|^{p_w^*}     \,wdx\right)^{\frac{1}{p_w^*}}  \, \leq C a(Q).
\end{equation*}
\end{theorem}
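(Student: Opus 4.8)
The plan is to derive Theorem \ref{thm:ptimes-Aq} as a consequence of the generalized $(p,p)$ result, Theorem \ref{thm:Lp(w)-a(Q)-clean}, by checking that the functional $a(Q)=\ell(Q)\left(\frac{1}{w(Q)}\mu(Q)\right)^{1/p}$ satisfies the smallness-preservation condition $SD^s_{p_w^*}(w)$ for the specific Sobolev exponent $p_w^*$ defined in \eqref{eq:ptimes-Aq}, and with a constant $\|a\|$ that is controlled independently of the weight in a way that makes $C_n\, s\,\|a\|^s$ a dimensional constant. More precisely, once we know $a\in SD^s_{p_w^*}(w)$ with suitable $s$, then since $a(Q)^{p_w^*}$ has the form $\ell(Q)^{p_w^*}\big(\frac{\mu(Q)}{w(Q)}\big)^{p_w^*/p}$ we are in the setting where Theorem \ref{thm:Lp(w)-a(Q)-clean} applies with exponent $p_w^*$ in place of $p$ (note $p_w^*>p\ge 1$), yielding exactly the claimed $L^{p_w^*}(w)$ estimate.

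The heart of the matter is therefore the verification of $SD^s_{p_w^*}(w)$, i.e. showing that for an $L$-small family $\{Q_i\}$ of pairwise disjoint subcubes of $Q$,
\begin{equation*}
\sum_i a(Q_i)^{p_w^*}w(Q_i)\le C^{p_w^*}\left(\frac{1}{L}\right)^{p_w^*/s}a(Q)^{p_w^*}w(Q).
\end{equation*}
The strategy is to first establish the base case: the functional \eqref{eq:general-a(Q)} with $\alpha=1$ and exponent $p$ satisfies $SD^{n}_p(w)$ with $\|a\|$ a dimensional constant — this is Lemma \ref{lem:L-small}, whose proof only uses disjointness, the elementary bound $\sum_i\ell(Q_i)^n=\sum_i|Q_i|\le |Q|/L$, and Hölder. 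Then to pass from exponent $p$ to the larger $p_w^*$ one writes, for each $Q_i$,
\begin{equation*}
a(Q_i)^{p_w^*}w(Q_i)=\left[a(Q_i)^p w(Q_i)\right]\cdot a(Q_i)^{p_w^*-p}=\left[a(Q_i)^p w(Q_i)\right]\ell(Q_i)^{p_w^*-p}\left(\frac{\mu(Q_i)}{w(Q_i)}\right)^{(p_w^*-p)/p},
\end{equation*}
and one must absorb the extra factor. The key geometric input is that $w\in A_q$ gives a reverse-doubling / volume comparison: there is $\delta=\delta(n,[w]_{A_q})>0$ such that $\frac{w(Q_i)}{w(Q)}\lesssim\left(\frac{|Q_i|}{|Q|}\right)^{\delta}$, or rather the dual statement controlling $\frac{|Q_i|}{|Q|}$ by a power of $\frac{w(Q_i)}{w(Q)}$; tracking the exponent $\delta$ through the $A_q$ characteristic is what forces the precise form $\frac{1}{p}-\frac{1}{p_w^*}=\frac{1}{n(q+\log[w]_{A_q})}$, since the effective gain in summing $\ell(Q_i)^{p_w^*-p}$ over an $L$-small family is a power $L^{-c}$ with $c$ proportional to $(p_w^*-p)\cdot\frac{1}{\text{(something like }n q\log[w]_{A_q})}$. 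I expect this bookkeeping — extracting from $[w]_{A_q}$ a quantitative open-ended exponent of the form $1/(q+\log[w]_{A_q})$ and combining it correctly with the $\ell(Q_i)^n$ summability — to be the main obstacle; it is the step where ``some extra geometric consequences of membership in $A_q$'' alluded to in the introduction must be made explicit, presumably via the precise quantitative reverse Hölder inequality for $A_\infty$ (or $A_q$) weights, with constants tracked carefully.

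Once the $SD^s_{p_w^*}(w)$ condition is in hand with $s\approx n(q+\log[w]_{A_q})$ and $\|a\|$ dimensional, I would plug into \eqref{eq:First main estimate}. One subtlety: the bound there is $C_n\, s\,\|a\|^s a(Q)$, and $s$ itself depends on $[w]_{A_q}$, so a priori the constant is not dimensional; I would therefore arrange the argument so that the exponent $s$ appearing in $SD^s_{p_w^*}(w)$ can be taken comparable to $n(q+\log[w]_{A_q})$ while simultaneously $\|a\|\le 1$ (or a dimensional constant), and then observe that the dependence of $C_n\, s\,\|a\|^s$ on the weight gets absorbed precisely because we have \emph{sacrificed} it into the definition of $p_w^*$ rather than into the constant — this is the ``balance between our best $p_w^*$ and the quantitative estimate'' mentioned before the theorem. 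Finally, one checks the hypothesis $\frac{1}{|Q|}\int_Q|f-f_Q|\le a(Q)$ is exactly what is assumed, so Theorem \ref{thm:Lp(w)-a(Q)-clean} applies verbatim and delivers the conclusion.
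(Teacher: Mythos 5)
Your overall architecture is exactly the paper's: derive the result by verifying a smallness-preservation condition for the functional at the higher exponent and then invoking Theorem \ref{thm:Lp(w)-a(Q)-clean}, with the weight-dependence ``sacrificed'' into the definition of $p_w^*$ rather than the constant. The paper carries this out via Lemma \ref{lem:smallAq}, introducing an auxiliary parameter $M>1$, defining $p^*_M$ by $\frac1p-\frac1{p^*_M}=\frac1{nqM}$, and showing that $a\in SD^{nM'}_{p^*_M}(w)$ with $\|a\|=[w]_{A_q}^{1/(nqM)}$; the only weight input is the direct $A_q$ consequence $(|E|/|Q|)^q\le [w]_{A_q}\,w(E)/w(Q)$ together with one application of H\"older, not the reverse H\"older inequality you suggest.

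Where your proposal goes astray is in the bookkeeping of how $s$ and $\|a\|$ combine. You propose arranging things so that $s\approx n(q+\log[w]_{A_q})$ and $\|a\|\le 1$; but in the setting of Lemma \ref{lem:smallAq}, the quantity $n(q+\log[w]_{A_q})$ is the product $nqM$ (obtained by choosing $M=1+\log[w]_{A_q}^{1/q}$), which determines the Sobolev exponent $p_w^*$, \emph{not} the smallness exponent $s$. The smallness exponent is $s=nM'$, which is \emph{bounded} (comparable to $n$ for large $[w]_{A_q}$), while $\|a\|=[w]_{A_q}^{1/(nqM)}\geq 1$, never $\le 1$ for a nontrivial weight. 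The dimensional constant then emerges from the identity
\begin{equation*}
\|a\|^s=[w]_{A_q}^{\frac{nM'}{nqM}}=[w]_{A_q}^{\frac{1}{q(M-1)}}=[w]_{A_q}^{\frac{1}{\log[w]_{A_q}}}=e,
\end{equation*}
so that $C_n\,s\,\|a\|^s\approx C_n\,n\,e$. Your proposed regime (large $s$, small $\|a\|$) is not the one that the computation delivers, and you have no mechanism producing $\|a\|\le 1$. You also do not address the case of small $A_q$ constant (roughly $[w]_{A_q}< e^q$), where the choice of $M$ degenerates and the paper falls back on Theorem \ref{thm:general-a(Q)} combined with the open-ended property of $A_q$ and Kolmogorov's inequality. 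So: right strategy, right identification of where the $A_q$ geometry must enter, but the quantitative balance — which is the entire point of introducing $M$ — is not correctly identified, and without it the constant is not dimensional.
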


As an immediate corollary we obtain a weighted Poincar\'e-Sobolev $(p_w^*,p)$ inequality. 

\begin{corollary}\label{cor:ptimes-Aq}
Let $1 \le p < n $ and let $w\in A_q$ with $1\le q\le p$. Let $p_w^*$ as in the previous theorem. Then the following inequality holds.
\begin{equation*}\label{eq:ptimes-Aq-gradient}
\left (\frac{1}{w(Q)}\int_Q |f-f_Q|^{p_w^*}wdx\right )^{\frac{1}{p_w^*}} \leq [w]^{\frac{1}{p}}_{A_p}\ell(Q)\left (\frac{1}{w(Q)}\int_Q|\nabla f|^{p} w dx\right )^{1/p}.
\end{equation*}
\end{corollary}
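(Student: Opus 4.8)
\textbf{Proof plan for Corollary~\ref{cor:ptimes-Aq}.}
The plan is to deduce the Poincaré–Sobolev gradient inequality from Theorem~\ref{thm:ptimes-Aq} by a single well-chosen specialization of the Radon measure $\mu$, followed by a standard normalization. The point is that Theorem~\ref{thm:ptimes-Aq} is already stated for an arbitrary Radon measure, so all one needs is to pick $\mu$ so that the functional $a$ becomes (a multiple of) the gradient functional appearing on the right-hand side.

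First I would set $d\mu = |\nabla f|^p\,w\,dx$, which is clearly a (locally finite) Radon measure. With this choice,
$$
a(Q) = \ell(Q)\left(\frac{1}{w(Q)}\int_Q |\nabla f|^p\,w\,dx\right)^{1/p},
$$
which is exactly the model functional from the introduction. The next step is to verify the hypothesis of Theorem~\ref{thm:ptimes-Aq}, namely that $f$ satisfies $\frac{1}{|Q|}\int_Q |f-f_Q|\,dx \le a(Q)$ for every cube $Q$. This is precisely the classical (unweighted) $L^1$ Poincaré inequality after inserting the weight: one writes $\frac{1}{|Q|}\int_Q|f-f_Q| \le c_n\,\ell(Q)\,\frac{1}{|Q|}\int_Q|\nabla f|$ and then passes to the weighted average using Hölder's inequality together with the $A_p$ condition on $w$, which produces exactly the factor $[w]_{A_p}^{1/p}$. (Equivalently, invoke the two-weight estimate \eqref{eq:2weightApPI p-p} with $u=v=w$.) Thus $f$ satisfies the hypothesis of Theorem~\ref{thm:ptimes-Aq} with the functional $\tilde a(Q) := [w]_{A_p}^{1/p} a(Q)$, noting that since $1\le q\le p$ we have $w\in A_q\subset A_p$, and the definition of $p_w^*$ is unchanged because it only involves $[w]_{A_q}$ and $q$, not $p$.

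Applying Theorem~\ref{thm:ptimes-Aq} to $f$ and the functional $\tilde a$ — which has the same structural form, just with $\mu$ replaced by $[w]_{A_p}\mu$ — yields
$$
\left(\frac{1}{w(Q)}\int_Q |f-f_Q|^{p_w^*}\,w\,dx\right)^{1/p_w^*} \le C\,\tilde a(Q) = C\,[w]_{A_p}^{1/p}\,\ell(Q)\left(\frac{1}{w(Q)}\int_Q |\nabla f|^p\,w\,dx\right)^{1/p},
$$
which is the claimed inequality (the stated form absorbs the dimensional constant $C$, or one tracks it through Theorem~\ref{thm:ptimes-Aq}). I expect no real obstacle here: the only mild subtlety is making sure that the $A_p$ constant enters only through the initial reduction to the functional $a$ and not again through $p_w^*$, which is fine since $p_w^*$ is governed by $q$ and $[w]_{A_q}$; and one should note the harmless point that if $\int_Q|\nabla f|^p w = \infty$ the inequality is trivial, while if it is finite the measure $\mu$ is genuinely finite on $Q$ and Theorem~\ref{thm:ptimes-Aq} applies verbatim.
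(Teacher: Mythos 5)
Your proof is correct and follows essentially the same route as the paper: start from the unweighted $(1,1)$ Poincar\'e inequality, upgrade it via the $A_p$ condition to produce the functional $a(Q)=C[w]_{A_p}^{1/p}\ell(Q)\bigl(\tfrac{1}{w(Q)}\int_Q|\nabla f|^p w\bigr)^{1/p}$, and then apply Theorem~\ref{thm:ptimes-Aq}. The only small slip is your remark that $p_w^*$ ``does not involve $p$''---it does (through $1/p-1/p_w^*$); what matters and is true is that it does not involve $[w]_{A_p}$, so rescaling $\mu$ by $[w]_{A_p}$ leaves $p_w^*$ unchanged.
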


Let us discuss briefly the different results depending on which class of weights is $w$. The constant on the inequality is always $[w]^{\frac{1}{p}}_{A_p}$ and this will not improve by assuming $q<p$ in the corollary above. The difference will appear on the value of $p_w^*$. For example, when $q=1$ we obtain an $A_1$ inequality of the form 
\begin{equation*}
\left (\frac{1}{w(Q)}\int_Q |f-f_Q|^{p_w^*}\, w\ dx\right )^{\frac{1}{p_w^*}} \leq [w]^{\frac{1}{p}}_{A_p}\ell(Q)\left (\frac{1}{w(Q)}\int_Q|\nabla f|^{p} \, w \ dx\right )^{1/p}
\end{equation*}
with 
$$
\frac{1}{p} -\frac{1}{ p_w^* }=\frac{1}{n(1+\log [w]_{A_1})}.
$$

This result should be compared to Corollary \ref{thm:P(p,p*)-local-avg-A1}, where we obtain by means of other arguments,  that $p^*=\frac{pn}{n-p}$ (which is equivalent to $\frac{1}{p}-\frac{1}{p^*}=\frac{1}{n}$)  instead of $p_w^*$ and linear dependence on the $A_1$ constant $[w]_{A_1}$ on the right hand side of the inequality.
Here we improve the constant by replacing the linear $[w]_{A_1}$ constant by $[w]^{\frac{1}{p}}_{A_p}$ but, on the other hand, we do not reach the usual Sobolev exponent $p^*$ associated to the constant weight. We will further improve on this in Corollary \ref{cor:MainCoro}.

\subsection{Generalized    Poincar\'e-Sobolev and the good-\texorpdfstring{$\lambda$}{lambda} method} 

The Sobolev exponent obtained in Theorem \ref{cor:ptimes-Aq} can be improved, namely, we can obtain  larger values of $p_w^*$ for the  $(p_w^*,p)$ Poincar\'e type inequalities. However, we have to pay with some extra powers of the $A_q$ constants in front. The reason behind that is that we will be using the very well known method of ``good-$\lambda$'' inequalities of Burkholder and Gundy  in a similar way as done in \cite{FLW, MacManus-Perez-98}.

We have the following theorem. Since we will be using both $D_r(w)$ and $SD_p^n(w)$ conditions on the functional $a$, we emphasize the difference between them by using the notation $\|a\|_{D_r(w)}$ for the first.
\begin{theorem}\label{thm:goodL-aQ}
Let $a$ be a functional satisfying: 
\begin{enumerate}
\item For some $p\ge 1$ it satisfies condition $SD^n_p(w)$ from \eqref{eq:SDp} with norm 1. Namely, for any cube $Q$ and any family $\{Q_i\}$ of pairwise disjoint subcubes of $Q$ such that $\{Q_i\}\in S(L)$ with $L>1$, the following inequality holds:
\begin{equation*}
\sum_{i}a(Q_i)^pw(Q_i)\le  \left (\frac{1}{L}\right )^{\frac{p}{n} }a(Q)^pw(Q).
\end{equation*}
\item For some $r>p$ the functional $a$ satisfies the $D_{r}(w)$ condition \eqref{eq:Dp}, that is, there is a constant $\|a\|_{D_r(w)}$ such that for any cube $Q$ and any family $\Lambda$ of pairwise disjoint subcubes of $Q$, the following inequality holds:
\begin{equation*}
\sum_{P\in\Lambda}a(P)^{r}w(P)\leq \|a\|^{r}_{D_r(w)} a(Q)^{r}w(Q).
\end{equation*}
\end{enumerate}

Let $f$ be a locally integrable function such that
\begin{equation*}
\frac{1}{|Q|}\int_{Q} |f-f_{Q}| \le a(Q)
\end{equation*}
for every cube $Q$. 

Let $w\in A_p$. Then, there exists a constant $c$ such that for any cube $Q$
\begin{equation*}
\| f- f_{Q} \|_{ _{L^{r,\infty}(Q, \frac{wdx}{w(Q)}  )} } \leq c\, \|a\|_{D_r(w)} [w]^{\frac1p}_{A_p}\,a(Q).
\end{equation*}
\end{theorem}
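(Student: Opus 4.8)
The plan is to combine the strong $L^p(w)$ estimate from Theorem~\ref{thm:Lp(w)-a(Q)-clean} with a good-$\lambda$ inequality relating the level sets of the sharp-type maximal function built from $a$ to those of $f-f_Q$. First I would set up the localized dyadic Calderón--Zygmund machinery inside the fixed cube $Q$: for a height $\lambda$ large compared to the average $\frac{1}{w(Q)}\int_Q|f-f_Q|\,wdx$ (or rather compared to $a(Q)$), perform a Calderón--Zygmund stopping-time decomposition of $|f-f_Q|$ on $Q$ at level $\lambda$ with respect to Lebesgue measure, producing the maximal dyadic subcubes $\{Q_i\}$ on which the Lebesgue average of $|f-f_Q|$ exceeds $\lambda$. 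On each $Q_i$, the hypothesis $\frac{1}{|Q_i|}\int_{Q_i}|f-f_{Q_i}|\le a(Q_i)$ and the pointwise comparison $|f_{Q_i}-f_Q|\lesssim\lambda$ reduce matters, via Theorem~\ref{thm:Lp(w)-a(Q)-clean} applied with exponent $p$ on each $Q_i$, to estimating $\sum_i a(Q_i)^p w(Q_i)$.

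The second step is the good-$\lambda$ estimate itself. Fix $\gamma,\varepsilon>0$ small. On the set where $|f-f_Q|>(1+\gamma)\lambda$ but a suitable maximal function of $a$ (the dyadic maximal average of $a(P)$ over $P\ni x$, or simply $a(Q_i)$ on $Q_i$) is $\le\varepsilon\lambda$, one runs the standard Burkholder--Gundy argument: the portion of each $Q_i$ contributing to this set is an $L$-small family with $L\sim\varepsilon^{-1}$-ish after using the Calderón--Zygmund Lebesgue bound $\sum_i|Q_i|\le\lambda^{-1}\int_Q|f-f_Q|$ and Chebyshev, so condition $SD^n_p(w)$ with norm $1$ gives the gain $(1/L)^{p/n}$. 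Integrating the resulting good-$\lambda$ inequality $w(\{|f-f_Q|>(1+\gamma)\lambda\})\le C(\varepsilon)\,w(\{M_Q^{\mathrm{dy}} a>\varepsilon\lambda\}) + (\text{error from }SD^n_p)$ against $\lambda^{r-1}\,d\lambda$ and choosing $\varepsilon$ appropriately yields
\begin{equation*}
\|f-f_Q\|_{L^{r,\infty}(Q,\frac{wdx}{w(Q)})}\le c\,\|M_Q^{\mathrm{dy}}a\|_{L^{r,\infty}(Q,\frac{wdx}{w(Q)})}.
\end{equation*}

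The third step is to control $\|M_Q^{\mathrm{dy}}a\|_{L^{r,\infty}(Q,\frac{wdx}{w(Q)})}$ by $\|a\|_{D_r(w)}\,[w]_{A_p}^{1/p}\,a(Q)$. Here the $D_r(w)$ condition is exactly the Carleson-type packing condition that makes the dyadic maximal function of the sequence $\{a(P)\}$ bounded: a standard stopping-time argument (choosing the maximal dyadic $P_j$ with $a(P_j)>\alpha$) plus \eqref{eq:Dp} with exponent $r$ gives $w(\{M_Q^{\mathrm{dy}}a>\alpha\})\le \alpha^{-r}\|a\|_{D_r(w)}^r a(Q)^r w(Q)$ directly in weak form, which is even cleaner. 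The factor $[w]_{A_p}^{1/p}$ enters because we must pass from the natural object (a dyadic maximal operator acting relative to Lebesgue measure, since the $SD^n_p$ gain was phrased via $\sum|Q_i|\le|Q|/L$) to the weighted $L^{r,\infty}(w)$ scale; comparing $w$-measures of Lebesgue-Calderón--Zygmund cubes to the relevant averages costs precisely one factor of $[w]_{A_p}^{1/p}$, via the $A_p$ inequality $\frac{|E|}{|Q_i|}\le [w]_{A_p}^{1/p}\big(\frac{w(E)}{w(Q_i)}\big)^{1/p}$ applied to the subset $E\subset Q_i$ arising in the good-$\lambda$ step.

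The main obstacle I expect is the bookkeeping in the good-$\lambda$ step: the $SD^n_p(w)$ condition is a \emph{weighted} smallness-preservation, but the smallness ($L$-smallness, $\sum|Q_i|\le|Q|/L$) is measured in Lebesgue measure, whereas the quantity we ultimately integrate is $w$-distribution; reconciling these forces the $A_p$ hypothesis and the exponent $1/p$, and one has to be careful that the ``small family'' of truncated cubes really is $L$-small with $L$ comparable to $\varepsilon^{-1}$ uniformly, which is where the Calderón--Zygmund bound $\sum_i |Q_i|\le \lambda^{-1}\int_Q|f-f_Q|\,dx$ together with the already-established strong bound at exponent, say, $p$ (to know $\int_Q|f-f_Q|\lesssim a(Q)w(Q)/\dots$) must be fed back in. Once the good-$\lambda$ inequality is in place with the right parameters, integrating against $r\lambda^{r-1}d\lambda$, absorbing the $L^{r,\infty}(w)$ term of $f-f_Q$ on the left (legitimate after a standard truncation-at-height-$N$-and-let-$N\to\infty$ argument), and invoking the weak-type bound for $M_Q^{\mathrm{dy}}a$ from $D_r(w)$ finishes the proof.
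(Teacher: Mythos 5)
Your overall architecture is the same as the paper's---Calder\'on--Zygmund decomposition of $|f-f_Q|$ at level $t$ inside $Q$, invoking Theorem~\ref{thm:Lp(w)-a(Q)-clean} on each stopping cube $Q_i$, a good-$\lambda$ dichotomy based on whether $a(Q_i)$ is below or above $\varepsilon t$, and the $D_r(w)$ packing condition to handle the cubes where $a(Q_i)$ is large. However, there are two genuine conceptual errors in how you propose to close the argument.

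First, the mechanism you give for producing $[w]_{A_p}^{1/p}$ is wrong. You attribute it to ``passing from a Lebesgue-measured small family to $w$-measure,'' citing the inequality $\frac{|E|}{|Q_i|}\le [w]_{A_p}^{1/p}\big(\frac{w(E)}{w(Q_i)}\big)^{1/p}$. That inequality goes the \emph{wrong} direction: it bounds $w(E)$ from \emph{below}, not above, and cannot convert a Lebesgue smallness into $w$-smallness. In the paper, the good-$\lambda$ is run on $\Omega_t=\{x\in Q: M(f-f_Q)(x)>t\}$ rather than on the bare level sets, so that $\Omega_{qt}\cap Q_i$ is controlled by $E_{Q_i}=\{x\in Q_i: M((f-f_{Q_i})\chi_{Q_i})(x)>t\}$, and $[w]_{A_p}^{1/p}$ enters as the weak-$(p,p)$ operator norm of $M$ on $L^p(w)$ (Muckenhoupt): $w(E_{Q_i})\le \frac{[w]_{A_p}}{t^p}\int_{Q_i}|f-f_{Q_i}|^p w$, after which Theorem~\ref{thm:Lp(w)-a(Q)-clean} yields $w(E_{Q_i})\le c_n[w]_{A_p}\,a(Q_i)^p t^{-p}w(Q_i)\le c_n[w]_{A_p}\varepsilon^p w(Q_i)$ on the good cubes. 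The $1/p$ exponent on $[w]_{A_p}$ then comes out of the final optimization in $\varepsilon$ (solving $q^r[w]_{A_p}\varepsilon^p=\tfrac12$), not from any measure-comparison step.

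Second, you claim that ``condition $SD^n_p(w)$ with norm $1$ gives the gain $(1/L)^{p/n}$'' directly in the good-$\lambda$ step, after deducing $L$-smallness from the Calder\'on--Zygmund covering. This is not how the hypothesis is used: the $SD^n_p(w)$ condition is consumed entirely inside Theorem~\ref{thm:Lp(w)-a(Q)-clean}, which is what you need to hold \emph{on each $Q_i$} in order to run the Chebyshev/weak-$(p,p)$ estimate above. There is no separate $(1/L)^{p/n}$ factor appearing in the good-$\lambda$ inequality itself; the small parameter in the recursion is $\varepsilon^p$ (from $a(Q_i)<\varepsilon t$), not $L^{-p/n}$. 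A related minor point: you should not need to integrate against $r\lambda^{r-1}\,d\lambda$---since the conclusion is only a weak $L^{r,\infty}$ bound, the paper works directly with $\varphi(N)=\sup_{0<t<N}t^r\,w(\Omega_t)/w(Q)$, absorbs, and lets $N\to\infty$; integrating would give a strong $L^r$ bound, which is more than is claimed and would require more than the $D_r(w)$ hypothesis permits.
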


Now, combining this result with the weak-implies-strong argument, we obtain the following corollaries on Poincar\'e Sobolev inequalities.

\begin{corollary}\label{cor:MainCoro}

Let $1 \le p < n $. Let $w\in A_q$ with $1\le q\le p$. Define the exponent $p_w^*$ by the formula

$$
\frac{1}{p} -\frac{1}{p_w^* }=\frac{1}{nq}.
$$
Then the following Poincar\'e-Sobolev $(p_w^*,p)$,inequality holds
$$
\left (\frac{1}{w(Q)}\int_Q |f-f_Q|^{p_w^*} w dx\right )^{\frac{1}{p_w^*}} \leq C [w]^{ \frac{1}{nq}}_{A_q}  
[w]_{A_p}^{ \frac{2}{p}} 
 \ell(Q)\left (\frac{1}{w(Q)}\int_Q|\nabla f|^{p}  w  dx\right )^\frac{1}{p},
$$
where $C=C_{n,p}$.
\end{corollary}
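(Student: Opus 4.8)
The plan is to apply Theorem \ref{thm:goodL-aQ} to the fractional functional $a(Q)=\ell(Q)\left(\frac{1}{w(Q)}\int_Q|\nabla f|^p\,wdx\right)^{1/p}$ and then upgrade the resulting weak-type estimate to the strong estimate by the truncation (weak-implies-strong) method of Section \ref{truncation}. The first step is to verify the two hypotheses of Theorem \ref{thm:goodL-aQ}. For hypothesis (1), I would invoke Lemma \ref{lem:L-small}: the general functional \eqref{eq:general-a(Q)} with exponent $\alpha=1$ satisfies $SD_p^{n}(w)$ with norm $1$, which is exactly the $SD^n_p(w)$ condition with the stated normalization. For hypothesis (2), I need a $D_r(w)$ bound for some $r>p$; the natural choice is the exponent $r=p_w^*$ determined by $\frac1p-\frac1{p_w^*}=\frac1{nq}$. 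Here the key geometric input is that $w\in A_q$: for $w\in A_q$ one has the reverse-doubling / ``power bump'' estimate $\frac{w(P)}{w(Q)}\le [w]_{A_q}\left(\frac{|P|}{|Q|}\right)^{1/q}$ for $P\subset Q$ (equivalently $w\,dx$ has the $A_\infty$-type lower mass bound with exponent $1/q$), and summing $a(P)^{p_w^*}w(P)$ over a disjoint family inside $Q$ against this, together with the disjointness $\sum_P\int_P|\nabla f|^p w\le\int_Q|\nabla f|^p w$ and $\sum_P|P|\le|Q|$, yields $\sum_P a(P)^{p_w^*}w(P)\le [w]_{A_q}^{?}\,a(Q)^{p_w^*}w(Q)$, i.e. $\|a\|_{D_{p_w^*}(w)}\le [w]_{A_q}^{1/p_w^*}$ or a similar small power. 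The precise bookkeeping of the exponent on $[w]_{A_q}$ in this step is where I should be careful, and I expect it to come out as $[w]_{A_q}^{1/(nq)}$ after converting the $L^{p_w^*}$-normalization, matching the first factor in the claimed constant.

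Next, with both hypotheses in hand and using $w\in A_q\subset A_p$ (so $[w]_{A_p}\le[w]_{A_q}$, but I will keep $[w]_{A_p}$ as stated), Theorem \ref{thm:goodL-aQ} gives the weak-type bound
\begin{equation*}
\|f-f_Q\|_{L^{p_w^*,\infty}(Q,\frac{wdx}{w(Q)})}\le c\,\|a\|_{D_{p_w^*}(w)}\,[w]_{A_p}^{1/p}\,a(Q).
\end{equation*}
Then I would apply the truncation method: the functional $a(Q,f)=\ell(Q)\left(\frac{1}{w(Q)}\int_Q|\nabla f|^p w\right)^{1/p}$ behaves well under the truncations $f_{\lambda_1}^{\lambda_2}=\min\{\max\{f-\lambda_1,0\},\lambda_2-\lambda_1\}$ because $|\nabla f_{\lambda_1}^{\lambda_2}|\le|\nabla f|\,\chi_{\{\lambda_1<|f-f_Q|<\lambda_2\}}$ pointwise a.e., and the hypothesis $\frac1{|Q|}\int_Q|f-f_Q|\le a(Q)$ is inherited (up to dimensional constants) by the truncated functions on each cube. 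Feeding the truncated functions into the weak-type inequality and summing the dyadic level contributions converts $L^{p_w^*,\infty}$ on the left into $L^{p_w^*}$ while the right-hand side remains controlled by $a(Q,f)$; this costs only a dimensional/$p$-dependent factor. Collecting constants, the total is $C_{n,p}\,\|a\|_{D_{p_w^*}(w)}\,[w]_{A_p}^{1/p}$, and since the weak-implies-strong step typically reintroduces one more factor of $[w]_{A_p}^{1/p}$ (the $A_p$ constant enters once in Theorem \ref{thm:goodL-aQ} and once more in handling the tails/normalization in the truncation argument in the weighted setting), one arrives at the exponent $[w]_{A_p}^{2/p}$ in the statement.

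The main obstacle is the second hypothesis of Theorem \ref{thm:goodL-aQ}, i.e. establishing the $D_{p_w^*}(w)$ condition with the right exponent $p_w^*=p\,nq/(nq-p)$ and a controlled power of $[w]_{A_q}$. This is precisely the point flagged in the introduction as ``the main difficulty to overcome is to obtain the smallness preservation for the functional $a$ when dealing with higher exponents,'' and it requires using the quantitative $A_q$ consequence $w(P)/w(Q)\lesssim[w]_{A_q}(|P|/|Q|)^{1/q}$ rather than mere doubling. A secondary technical point is ensuring the truncation method applies cleanly in the weighted setting: one must check that the weighted averages $\frac1{|Q|}\int_Q|f_{\lambda_1}^{\lambda_2}-(f_{\lambda_1}^{\lambda_2})_Q|$ are still dominated by the truncated functional, which follows from the pointwise gradient bound above but needs the standard comparison between $f_Q$-centered and median-centered oscillations. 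Everything else — Lemma \ref{lem:L-small}, the good-$\lambda$ machinery inside Theorem \ref{thm:goodL-aQ}, and the final constant bookkeeping — is routine given the stated results.
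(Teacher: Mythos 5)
Your overall plan is the same as the paper's: build the functional $a$, verify hypotheses (1) and (2) of Theorem \ref{thm:goodL-aQ} via Lemma \ref{lem:L-small} and a $D_{p_w^*}(w)$ computation, apply the good-$\lambda$ theorem, and finish with weak-implies-strong. However, there are two slips that would block a literal execution of your sketch.

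First, the ``key geometric input'' from $A_q$ is stated in the wrong direction and with the wrong exponent. You wrote $\frac{w(P)}{w(Q)}\le[w]_{A_q}\left(\frac{|P|}{|Q|}\right)^{1/q}$ and called it a lower mass bound, but what you need (and what the paper's Lemma \ref{lem:new-smallAp} actually uses, via \eqref{eq:prop-Ap-set}) is the opposite inequality
\begin{equation*}
\frac{|P|}{|Q|}\;\le\;[w]_{A_q}^{1/q}\left(\frac{w(P)}{w(Q)}\right)^{1/q},
\qquad\text{equivalently}\qquad
\frac{w(P)}{w(Q)}\;\ge\;[w]_{A_q}^{-1}\left(\frac{|P|}{|Q|}\right)^{q}.
\end{equation*}
This matters because in $\sum_i a(Q_i)^{p_w^*}w(Q_i)=\sum_i\ell(Q_i)^{p_w^*}\mu(Q_i)^{p_w^*/p}w(Q_i)^{1-p_w^*/p}$ the exponent $1-p_w^*/p$ is negative, so to bound the sum from above you need a \emph{lower} bound on each $w(Q_i)$; an upper bound on $w(Q_i)$ runs the wrong way. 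With the correct inequality one factors out $\left(|Q|/w(Q)^{1/q}\right)^{p_w^*/n}$, uses $\sum_i\mu(Q_i)^{p_w^*/p}\le\mu(Q)^{p_w^*/p}$ since $p_w^*>p$, and obtains $\|a\|_{D_{p_w^*}(w)}\le[w]_{A_q}^{1/(nq)}$ exactly as you anticipate.

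Second, the accounting of the $[w]_{A_p}^{2/p}$ factor is misattributed. The truncation step (Lemma \ref{lem:weak-strong}) is weight-free and contributes no power of $[w]_{A_p}$. The second factor $[w]_{A_p}^{1/p}$ comes earlier, from establishing the starting hypothesis $\frac1{|Q|}\int_Q|f-f_Q|\le a(Q)$ itself: you begin from the unweighted $(1,1)$ Poincar\'e inequality and then apply \eqref{eq:prop-Ap-function} to replace the unweighted gradient average by the weighted $L^p(w)$ average, which costs $[w]_{A_p}^{1/p}$; this factor is therefore built into the functional $a(f,Q)=C[w]_{A_p}^{1/p}\ell(Q)\left(\frac1{w(Q)}\int_Q|\nabla f|^p w\right)^{1/p}$ (this is \eqref{eq:mixed-L1Lp-2}). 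Theorem \ref{thm:goodL-aQ} then contributes the other $[w]_{A_p}^{1/p}$, and $\|a\|_{D_{p_w^*}(w)}\le[w]_{A_q}^{1/(nq)}$ supplies the remaining factor. With these two corrections your proposal coincides with the paper's proof.
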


\begin{remark}
We remark that this corollary improves the main result for $A_p$ weights from section 15.26 of \cite{HKM}. 
\end{remark}

\begin{remark}
We also remark here that in the case of $w\in A_1$, namely under a stronger condition, we recover the classical Sobolev index $p_w^*=p^*=\frac{np}{n-p}$,
$$
\left (\frac{1}{w(Q)}\int_Q |f-f_Q|^{p^*} w dx\right )^{\frac{1}{p^*}} \leq C [w]^{ \frac{1}{n}}_{A_1}  
[w]_{A_p}^{ \frac{2}{p}} 
 \ell(Q)\left (\frac{1}{w(Q)}\int_Q|\nabla f|^{p}  w  dx\right )^\frac{1}{p},
$$
with $C=C_{n,p}$. That is, the $A_1$ class of weights  behaves in that aspect as the Lebesgue measure. 
This improves the result given in Lemma 15.30 p. 308 of \cite{HKM} since we are able  to reach both optimal endpoints, namely the exponent $p=1$  and the (unweighted) Sobolev exponent $p^{*}=\frac{pn}{n-p}$, for any $1 \le p <n$.  This result should be compared with Corollary \ref{thm:P(p,p*)-local-avg-A1} below where we also derive by different methods a similar estimate with a worst constant in the range 
$2n'<p<n$. 

\end{remark}

The proof of  Corollary  \ref{cor:MainCoro}  essentially reduces to check that the generic functional given in \eqref{eq:general-a(Q)} (which includes the case of the gradient) satisfies the hypothesis $(1)$ and $(2)$ from Theorem \ref{thm:goodL-aQ} involving an explicit estimate of $\|a\|$. The fact that the functional 
\begin{equation*}
a(Q)=\ell(Q)\left(\frac{1}{w(Q)}\mu(Q) \right)^{1/p}
\end{equation*}
satisfies condition (1) will be obtained in Lemma \ref{lem:L-small}. An appropriate value of $r$ satisfying condition (2)  and the norm estimate for the functional $a$ is the content of Lemma \ref{lem:new-smallAp} below.

\begin{lemma} \label{lem:new-smallAp} 

Let $1\leq  p<n$    and let $a(Q)$ defined as in \eqref{eq:general-a(Q)} with $\alpha=1$, namely 
$$a(Q)=\ell(Q)\left(\frac{1}{w(Q)}\mu(Q) \right)^{1/p}.
$$
Let  $w \in A_q$ with $1\leq q \leq p$.  Define the exponent $p^*_w$ by the formula
$$
\frac{1}{p} -\frac{1}{p_w^*  }=\frac{1}{nq}.
$$
Then for any family $\{Q_i\}$ of pairwise disjoint subcubes of $Q$ the following inequality holds:
\begin{equation}
\sum_{i}a(Q_i)^{ p_w^*   }\,w(Q_i) \leq  [w]_{A_q}^{ \frac{ p_w^*   }{nq} }
\,a(Q)^{ p_w^* } w(Q).
\end{equation}
That is, the functional $a$ satisfies the condition $D_{p_w^*}(w)$ and further we have that $\|a\| \leq [w]_{A_q}^{ \frac{ 1 }{nq} }$.

\end{lemma}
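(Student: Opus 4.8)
The plan is to estimate each term $a(Q_i)^{p_w^*}w(Q_i)$ by transferring the excess exponent $p_w^*-p$ onto the geometric factor $\ell(Q_i)$ and then converting this into a smallness factor $(|Q_i|/|Q|)^{\beta}$ for a suitable $\beta>0$, after which summability of $\sum_i |Q_i|$ closes the estimate. Concretely, write $p_w^* = p + \delta$ where, by the defining relation $\frac1p-\frac1{p_w^*}=\frac1{nq}$, one has $\delta = p_w^*\cdot\frac{p}{nq}$, so that $\frac{p_w^*}{nq}=\frac{\delta}{p}$. Since $a(Q_i)^{p_w^*} = a(Q_i)^{p}\, a(Q_i)^{\delta}$ and $a(Q_i)^p = \ell(Q_i)^p \mu(Q_i)/w(Q_i)$, I would first bound the ``extra'' factor $a(Q_i)^{\delta}=\ell(Q_i)^{\delta}\big(\mu(Q_i)/w(Q_i)\big)^{\delta/p}$. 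The key is that $\ell(Q_i)^{\delta}$ should be compared to $\ell(Q)^{\delta}(|Q_i|/|Q|)^{\delta/n}$, which is an equality, and the fraction $\mu(Q_i)/w(Q_i)$ must be controlled by $\mu(Q)/w(Q)$ times a power of $w(Q)/w(Q_i)$ — this is exactly where the $A_q$ condition enters.

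The main step is therefore the following chain: for a subcube $Q_i\subset Q$, the $A_q$ condition on $w$ gives a lower bound
\begin{equation*}
\frac{w(Q_i)}{w(Q)}\;\geq\; [w]_{A_q}^{-1}\left(\frac{|Q_i|}{|Q|}\right)^{q},
\end{equation*}
the standard ``reverse doubling from below'' consequence of $A_q$. Using this to replace $1/w(Q_i)$ and recombining, each summand becomes
\begin{equation*}
a(Q_i)^{p_w^*}w(Q_i)\;\leq\; [w]_{A_q}^{\frac{\delta}{p}}\,\ell(Q)^{\delta}\left(\frac{|Q_i|}{|Q|}\right)^{\frac{\delta}{n}-\frac{q\delta}{p}\cdot\frac1?}\cdots
\end{equation*}
so I must choose the bookkeeping so that the residual power of $|Q_i|/|Q|$ is exactly $1$, making $\sum_i |Q_i|/|Q|\le 1$ the only fact needed. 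Tracking the exponents: the factor $\ell(Q_i)^{\delta}$ contributes $(|Q_i|/|Q|)^{\delta/n}$, while to pass from $a(Q_i)^p w(Q_i)=\ell(Q_i)^p\mu(Q_i)$ to something proportional to $a(Q)^p w(Q)=\ell(Q)^p\mu(Q)$ I only use $\mu(Q_i)\le\mu(Q)$ and $\ell(Q_i)^p=\ell(Q)^p(|Q_i|/|Q|)^{p/n}$, but I then need to re-insert $w(Q_i)/w(Q)$; the $A_q$ lower bound costs $[w]_{A_q}(|Q_i|/|Q|)^{-q}$ raised to the appropriate power. Balancing $\frac{p_w^*}{n}$ against $\frac{q\,p_w^*}{nq}=\frac{p_w^*}{n}\cdot\frac{q}{?}$ and using the defining identity $\frac1p-\frac1{p_w^*}=\frac1{nq}$ — equivalently $\frac{p_w^*-p}{p}=\frac{p_w^*}{nq}$ — is precisely what makes the leftover power of $|Q_i|/|Q|$ equal to $1$ and the constant equal to $[w]_{A_q}^{p_w^*/(nq)}$.

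The anticipated obstacle is not any deep inequality but the exponent bookkeeping: one has to be careful that the only place the $A_q$ constant appears is through the single application of the lower volume bound $w(Q_i)/w(Q)\ge [w]_{A_q}^{-1}(|Q_i|/|Q|)^q$, and that the powers of $|Q_i|/|Q|$ generated by $\ell(Q_i)^{p_w^*}$ on one side and by $(w(Q)/w(Q_i))^{(p_w^*-p)/p}$ on the other combine to leave the harmless power $1$. After that, $\sum_i a(Q_i)^{p_w^*}w(Q_i)\le [w]_{A_q}^{p_w^*/(nq)}\,a(Q)^{p_w^*}w(Q)\sum_i\frac{|Q_i|}{|Q|}\le [w]_{A_q}^{p_w^*/(nq)}\,a(Q)^{p_w^*}w(Q)$ by disjointness, which is exactly the claimed $D_{p_w^*}(w)$ bound with $\|a\|\le[w]_{A_q}^{1/(nq)}$. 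Finally I would note that the case $q=1$ (the case used for the classical Sobolev exponent) is literally this computation with $w\in A_1$, where the volume bound is the defining inequality of $A_1$ itself.
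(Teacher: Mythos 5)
Your overall strategy is the same one the paper uses: rewrite $a(Q_i)^{p_w^*}w(Q_i)$ so that the only weight appearing is a negative power of $w(Q_i)$, then apply the $A_q$ lower volume bound $(|Q_i|/|Q|)^q \le [w]_{A_q}\,w(Q_i)/w(Q)$, and let the defining identity $\tfrac1p-\tfrac1{p_w^*}=\tfrac1{nq}$, equivalently $\tfrac{p_w^*-p}{p}=\tfrac{p_w^*}{nq}$, drive the exponent bookkeeping. That part of your plan is exactly right. The gap is in the final bookkeeping, and it is fatal as written. Tracking the exponents carefully, one has
\begin{equation*}
a(Q_i)^{p_w^*}w(Q_i)
= \ell(Q_i)^{p_w^*}\,\mu(Q_i)^{p_w^*/p}\,w(Q_i)^{-p_w^*/(nq)}
= \mu(Q_i)^{p_w^*/p}\left(\frac{|Q_i|}{w(Q_i)^{1/q}}\right)^{p_w^*/n},
\end{equation*}
and the $A_q$ bound gives precisely $\dfrac{|Q_i|}{w(Q_i)^{1/q}}\le [w]_{A_q}^{1/q}\dfrac{|Q|}{w(Q)^{1/q}}$. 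So the power $(|Q_i|/|Q|)^{p_w^*/n}$ coming from $\ell(Q_i)^{p_w^*}$ is \emph{exactly} cancelled by the power $(|Q_i|/|Q|)^{-p_w^*/n}$ coming from the $A_q$ step; the residual exponent on $|Q_i|/|Q|$ is $0$, not $1$. Your claimed closing step $\sum_i |Q_i|/|Q|\le 1$ therefore has nothing left to act on. Moreover you also bound $\mu(Q_i)\le\mu(Q)$ termwise; combined with the cancellation, the bound you are left with is $a(Q_i)^{p_w^*}w(Q_i)\le [w]_{A_q}^{p_w^*/(nq)}a(Q)^{p_w^*}w(Q)$ for \emph{each} $i$, which cannot be summed over an arbitrary disjoint family.

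The fix, which is also what the paper does, is not to bound $\mu(Q_i)\le\mu(Q)$ term by term but to keep the factors $\mu(Q_i)^{p_w^*/p}$ and close the sum with the elementary superadditivity inequality $\sum_i \mu(Q_i)^{p_w^*/p}\le\bigl(\sum_i\mu(Q_i)\bigr)^{p_w^*/p}\le\mu(Q)^{p_w^*/p}$, valid because $p_w^*/p\ge 1$ and the $Q_i$ are pairwise disjoint subsets of $Q$. With that replacement your argument becomes correct and essentially identical to the one in the paper.
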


We also will show that our method provides similar results in the scale of Lorentz spaces. More precisely, we obtain similar inequalities as in Corollary \ref{cor:Poincare(p,p)-twoweight}  assuming that the gradient satisfies a stronger condition in the (local) Lorentz space $L^{p,1}$ 
but with a larger class of weights $A_{p,1}$  which contains $A_p$. This class of weights was introduced by Chung-Hunt-Kurtz in \cite{CHK} (see the precise definitions in Section \ref{sec:Lorentz}).

\begin{corollary}\label{cor:Lp1(w)-a(Q)-clean}

Let  $w\in A_{p,1}$, then there exists a dimensional constant $c_n$ such that for any cube $Q$
\begin{equation*}
\left( \frac{1}{ w(Q)  } \int_{ Q }   |f -f_{Q}|^p     \,wdx\right)^{\frac{1}p}   \leq  c_n  \ell(Q) [w]_{A_{p,1}}^{\frac1p} \, \,\Big\| \nabla f \Big\|_{L^{p, 1}(Q, \frac{wdx}{w(Q)} ) }.
\end{equation*}
\end{corollary}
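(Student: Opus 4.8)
The plan is to deduce Corollary \ref{cor:Lp1(w)-a(Q)-clean} from the generalized $(p,p)$ machinery of Theorem \ref{thm:Lp(w)-a(Q)-clean} (or its weak version Theorem \ref{thm:general-a(Q)} together with the truncation method), applied to the functional
\[
a(Q) = \ell(Q)\,[w]_{A_{p,1}}^{1/p}\,\Bigl\|\nabla f\Bigr\|_{L^{p,1}(Q,\frac{wdx}{w(Q)})}.
\]
First I would reduce matters to the known Poincaré inequality at the starting scale: one needs that $\frac{1}{|Q|}\int_Q|f-f_Q| \le C\,a(Q)$, i.e. an $L^1$ Poincaré inequality whose right-hand side is an $L^{p,1}$ norm of the gradient against the weight $w\in A_{p,1}$. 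This is exactly the content of the $(1,1)$-type weighted Poincaré estimate associated with the Chung--Hunt--Kurtz class $A_{p,1}$: since $A_{p,1}$ is the natural class making the fractional integral $I_1$ (equivalently, the Poincaré operator $f\mapsto \ell(Q)\avgint_Q|\nabla f|$) bounded from $L^{p,1}(w)$ into $L^{p,\infty}(w)$ or $L^p(w)$ locally, one gets $\avgint_Q|f-f_Q|\le c_n\,\ell(Q)[w]_{A_{p,1}}^{1/p}\|\nabla f\|_{L^{p,1}(Q,\frac{wdx}{w(Q)})}$ by the standard representation $|f(x)-f_Q|\lesssim I_1(|\nabla f|\chi_Q)(x)$ followed by Hölder/duality in Lorentz spaces. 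This furnishes the pointwise-in-$Q$ comparison $\avgint_Q|f-f_Q|\le a(Q)$ up to a dimensional constant.

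The second step is the geometric summability: I must verify that this $a$ satisfies a $D_p(w)$ (or $SD_p^s(w)$) condition with constant controlled by a dimensional constant, so that Theorem \ref{thm:general-a(Q)} or Theorem \ref{thm:Lp(w)-a(Q)-clean} applies and upgrades the $L^1$ average bound to the strong $L^p(w)$ bound
\[
\Bigl(\frac{1}{w(Q)}\int_Q|f-f_Q|^p\,wdx\Bigr)^{1/p}\le C\,\|a\|\,a(Q).
\]
For a family $\{P\}$ of pairwise disjoint subcubes of $Q$ one estimates $\sum_P a(P)^p w(P) = [w]_{A_{p,1}}\sum_P \ell(P)^p\,\|\nabla f\|_{L^{p,1}(P,\frac{wdx}{w(P)})}^p\,w(P)$. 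Here $\ell(P)^p\le \ell(Q)^p$, and because the Lorentz norm $L^{p,1}$ with $p>1$ satisfies a favorable quasi-superadditivity over disjoint sets (the $L^{p,1}$ "norm" of a sum of functions with disjoint supports dominates, up to a constant depending only on $p$, the $\ell^p$-sum of the pieces' norms — this is where one uses $p\ge1$ and a rearrangement argument), the sum $\sum_P \|\nabla f\|_{L^{p,1}(P,\frac{wdx}{w(P)})}^p\,w(P)/w(P)$ collapses, after renormalizing the measures, into a single $L^{p,1}(Q,\frac{wdx}{w(Q)})$ quantity times $w(Q)$. Thus $\sum_P a(P)^p w(P)\le C_p\,a(Q)^p w(Q)$, giving $a\in D_p(w)$ with $\|a\|\le C_p$.

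The main obstacle is precisely this Lorentz-space summability step: unlike the case of $L^p$, where $\sum_P\int_P g = \int_Q g$ is trivial, the functional $\|g\|_{L^{p,1}}^p$ is not additive over a partition, and one must show the correct "superadditivity" $\sum_i\|g\chi_{E_i}\|_{L^{p,1}(\nu)}^p\lesssim_p \|g\|_{L^{p,1}(\nu)}^p$ for disjoint $E_i$, with the normalization of the reference probability measure changing from $P$ to $Q$. I would handle this by passing to distribution functions: writing the $L^{p,1}$ norm as $\int_0^\infty \lambda_g(t)^{1/p}\,dt$ with $\lambda_g$ the distribution function with respect to $\frac{wdx}{w(P)}$, expanding, and using the elementary inequality $\bigl(\sum a_i\bigr)^{1/p}\le \sum a_i^{1/p}$ in reverse via Minkowski for the layer-cake, together with disjointness of the $E_i$. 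Once the $D_p(w)$ property is in hand with dimensional constant, Theorem \ref{thm:Lp(w)-a(Q)-clean} (noting $w\in A_{p,1}\subset A_\infty$) directly yields the claimed inequality; alternatively, if only the weak form is obtained one invokes the truncation method of Section \ref{truncation}, which is legitimate here since the functional is built from a genuine gradient.
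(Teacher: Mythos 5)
Your overall strategy matches the paper's: produce an $L^1$ starting point of the form $\avgint_Q|f-f_Q|\lesssim\ell(Q)[w]_{A_{p,1}}^{1/p}\|\nabla f\|_{L^{p,1}(Q,\frac{wdx}{w(Q)})}$, set $a(Q)=\ell(Q)\|\nabla f\|_{L^{p,1}(Q,\frac{wdx}{w(Q)})}$, verify a geometric summability condition for $a$, then apply the self-improving theorem. You also correctly identify the central new ingredient, namely the Minkowski-type superadditivity $\sum_i\|g\|^p_{L^{p,1}(Q_i,wdx)}\le\|g\|^p_{L^{p,1}(Q,wdx)}$ for disjoint $Q_i\subset Q$ and $p\ge1$, which is exactly the lemma (cited to Chung--Hunt--Kurtz) that the paper uses. (The paper derives the starting point more directly from the unweighted $(1,1)$ Poincar\'e inequality followed by Lorentz H\"older against the $A_{p,1}$ condition, \eqref{eq:prop-Ap1-function}, rather than via the pointwise representation $|f-f_Q|\lesssim I_1(|\nabla f|\chi_Q)$, but both routes are fine.)

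The genuine gap is in the geometric verification. You only prove $a\in D_p(w)$, using the crude bound $\ell(P)^p\le\ell(Q)^p$; but Theorem \ref{thm:Lp(w)-a(Q)-clean} requires the \emph{smallness-preserving} condition $SD_p^s(w)$ with $s>1$, not merely $D_p(w)$, and the difference is precisely the gain factor $(1/L)^{p/s}$ when $\{Q_i\}\in S(L)$. To get it, one must not discard the side lengths: write $\ell(Q_i)^p=|Q_i|^{p/n}$, apply H\"older with exponent $n/p$ (hence $1\le p<n$ is needed) so that $\sum_i|Q_i|^{p/n}c_i\le(\sum_i|Q_i|)^{p/n}(\sum_i c_i^{(n/p)'})^{1/(n/p)'}\le(|Q|/L)^{p/n}\sum_ic_i$, and only then invoke the Lorentz superadditivity on $c_i=\|\nabla f\|_{L^{p,1}(Q_i,wdx)}^p$; this yields $a\in SD_p^n(w)$ with norm $1$, as in the paper's Lemma \ref{lem:L-small-Ap1}. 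Your fallback of using Theorem \ref{thm:general-a(Q)} plus truncation does not repair this cleanly: the constant in that weak-type theorem depends on $[w]_{A_\infty}$, not just on $[w]_{A_{p,1}}$, so the dimensional bound $c_n[w]_{A_{p,1}}^{1/p}$ in the statement would be lost; and the truncation lemma as written is tailored to an $L^1(\nu)$ right-hand side, since it sums the pieces in $\ell^1$, which is not directly compatible with the $L^{p,1}$ norm without reworking the summation exponent.
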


\begin{remark}

As in the $L^p$ case it would be possible to derive some  Sobolev-Poincar\'e inequalities like in Theorem \ref{thm:ptimes-Aq} or \ref{thm:goodL-aQ} but we will not pursue in this direction. 

\end{remark}

By means of a completely different method, we will present  a special  two weight Poincar\'e-Sobolev inequality  involving the maximal function on the right hand side where no assumption is assumed on the weight. As usual, we will denote  by $p^*$  be the classical Sobolev exponent, 
$$
\frac{1}{p} -\frac{1}{p^*}=\frac{1}{n}   \qquad 1 \leq p< n.
$$
We have the following theorem.

\begin{theorem}\label{thm:PoincareSobolev-2weights}
Let $w$ be a weight in $\mathbb{R}^n$, $n\ge 2$. Then if $1\leq p <n$ we have that
\begin{equation}\label{eq:P(p,p*)-local-avg}
\left (\int_Q|f-f_{Q,w}|^{p^*} wdx \right )^{\frac{1}{p*}} \le C \left (\int_Q |\nabla f|^p \frac{(M^c(w\chi_Q))^\frac{p}{n'}}{w^{p-1}} dx \right )^{\frac{1}{p}}.
\end{equation} 
\end{theorem}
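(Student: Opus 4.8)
The plan is to peel everything off in stages and arrive at a single two-weight norm inequality for the Riesz potential of order one, which one then recognizes as (a localized version of) the sharp ``fractional Fefferman--Stein'' estimate.

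First I would reduce the weighted corrector $f_{Q,w}$ to the ordinary average $f_Q$. Since $p^*=\frac{pn}{n-p}\ge n'\ge 1$, and $c-f_{Q,w}$ is the $w$-average over $Q$ of $c-f$, Hölder's inequality gives $\|c-f_{Q,w}\|_{L^{p^*}(Q,wdx)}\le \|f-c\|_{L^{p^*}(Q,wdx)}$ for every constant $c$, hence $\|f-f_{Q,w}\|_{L^{p^*}(Q,wdx)}\le 2\,\|f-f_Q\|_{L^{p^*}(Q,wdx)}$. So it suffices to bound $\|f-f_Q\|_{L^{p^*}(Q,wdx)}$, and for this I would invoke the classical pointwise Poincaré (subrepresentation) estimate on the convex set $Q$: for a.e. $x\in Q$,
\[
|f(x)-f_Q|\le c_n\int_Q\frac{|\nabla f(y)|}{|x-y|^{n-1}}\,dy=:c_n\,I_1^Q(|\nabla f|)(x).
\]
Thus the theorem follows (taking $g=|\nabla f|$) once one proves the two-weight bound
\[
\Big(\int_Q (I_1^Q g)^{p^*}\,w\,dx\Big)^{1/p^*}\le C_n\Big(\int_Q g^p\,v\,dx\Big)^{1/p},\qquad v:=\frac{(M^c(w\chi_Q))^{p/n'}}{w^{p-1}},
\]
for every $g\ge 0$ supported in $Q$.

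The heart of the matter is this last inequality, and the mechanism I would use is a dyadic annular decomposition. With $A_j(x)=\{y\in Q:2^j\le|x-y|<2^{j+1}\}$ one has $I_1^Q g(x)\approx\sum_j 2^{-j(n-1)}g(A_j(x))$, and Hölder's inequality with weight $v$ on each annulus gives $g(A_j(x))\le\big(\int_{A_j(x)}g^p v\big)^{1/p}\big(\int_{A_j(x)}v^{1-p'}\big)^{1/p'}$, where a short computation using $p/(p-1)=p'$ yields $v^{1-p'}=w\,(M^c(w\chi_Q))^{-p'/n'}$. The crucial ingredient is that on $A_j(x)$ one has $M^c(w\chi_Q)(y)\gtrsim |B(x,2^{j+1})|^{-1}w(B(x,2^{j+1})\cap Q)=:2^{-jn}W_j(x)$ for \emph{all} $y\in A_j(x)$ (test the centered maximal operator at $y$ against $B(y,2^{j+2})\supset B(x,2^{j+1})$), hence $\int_{A_j(x)}v^{1-p'}\lesssim 2^{jnp'/n'}W_j(x)^{1-p'/n'}$. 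Since $n/n'=n-1$, the powers of $2^j$ cancel \emph{exactly} -- this is precisely the algebraic reason the exponent $p/n'$ in $v$ is the correct one -- and, writing $b_j(x)=\int_{A_j(x)}g^p v$ and using $\tfrac1{p'}-\tfrac1{n'}=-\tfrac1{p^*}$,
\[
I_1^Q g(x)\lesssim \sum_j W_j(x)^{-1/p^*}\,b_j(x)^{1/p},\qquad \sum_j b_j(x)=\int_Q g^p v.
\]
It then remains to take the $L^{p^*}(w\,dx)$ norm over $Q$, exploiting that $j\mapsto W_j(x)$ and $j\mapsto\sum_{k\le j}b_k(x)$ are both monotone (they are the $w$- and $g^pv$-masses of $B(x,2^{j+1})\cap Q$), and using the Fefferman--Stein inequality $\int_Q (Mg)^p w\le c_n\int_Q g^p M^c(w\chi_Q)$ to absorb the small annuli.

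The main obstacle is exactly this last passage: controlling the pointwise sum $\sum_j W_j(x)^{-1/p^*}b_j(x)^{1/p}$ in $L^{p^*}(w)$, that is, choosing the cut-off scale $\delta=\delta(x)$ so that the ``local'' part of the potential (governed by $Mg$ and Fefferman--Stein) and the ``tail'' part (governed by Hölder against $v^{1-p'}$ and the telescoping of scales) are \emph{simultaneously} dominated by $\|g\|_{L^p(v)}$, and verifying that the residual summation over scales converges with a purely dimensional constant -- this is where the hypotheses $n\ge 2$ and $p<n$ (equivalently $(n-1)p'>n$) enter. A cleaner, more structural way to settle the two-weight bound is to invoke the sharp two-weight theory for fractional integrals in the style of Sawyer--Wheeden and Pérez: the displayed estimate asserts precisely that the pair $(v,w)$ satisfies a bumped $A_{p,p^*}$-type condition in which the bump on the $w$-side is supplied automatically by $M^c(w\chi_Q)$; this self-improvement of the always-valid (but by itself insufficient) plain $A_{p,p^*}$ condition into a sufficient one is the reason no hypothesis whatsoever is needed on the weight $w$.
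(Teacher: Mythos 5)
Your initial reductions are sound: the passage from $f_{Q,w}$ to $f_Q$ via Jensen, and the subrepresentation $|f(x)-f_Q|\le c_n I_1(|\nabla f|\chi_Q)(x)$, are both correct and are, respectively, the paper's Lemma on vanishing functions plus Jensen, and item 2) of Theorem \ref{thm:equiv-weak-strong-1n'-pointiwise}. You also correctly identify the exponent cancellation on the annuli and why the power $p/n'$ on $M^c(w\chi_Q)$ is exactly right. But the core of your argument is a direct two-weight $L^p(v)\to L^{p^*}(w)$ bound for $I_1$, and this is not established: the annular decomposition reduces the problem to showing $\int_Q\bigl(\sum_j W_j(x)^{-1/p^*}b_j(x)^{1/p}\bigr)^{p^*}w\,dx\lesssim\bigl(\int_Q g^p v\bigr)^{p^*/p}$, which is precisely the hard part, and you explicitly flag it as ``the main obstacle'' without resolving it. The fallback appeal to Sawyer--Wheeden/P\'erez two-weight theory is not a proof either: those sufficiency results require a testing or bump hypothesis, and the point of the theorem is that it holds with \emph{no} assumption on $w$, so one would have to verify that the pair $(v,w)$ automatically satisfies the needed condition --- which is at least as much work as the original claim.

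The paper takes a genuinely different and considerably cheaper route. It never proves a strong-type $(p,p^*)$ fractional integral bound. Instead it only needs the \emph{weak}-type $L^1\to L^{n',\infty}$ two-weight estimate for $I_1$ against $(\mu,(M\mu)^{1/n'})$ (an easy Kolmogorov-type computation, item 3) of Theorem \ref{thm:equiv-weak-strong-1n'-pointiwise}), upgrades it to a strong $(1,n')$ estimate by the weak-implies-strong truncation lemma, and then bootstraps from $(1,n')$ to $(p,p^*)$ by the Chanillo--Wheeden/Dur\'an et al.\ device: pick $\lambda$ so that $\int_Q(f-\lambda)_+^{p^*}=\int_Q(f-\lambda)_-^{p^*}$, note that one of $(f-\lambda)_\pm$ vanishes on a set of $w$-measure $\ge w(Q)/2$, apply the $(1,n')$ inequality to $(f-\lambda)_\pm^{p^*/n'}$, and use H\"older with exponents $p,p'$ on the right, absorbing the $L^{p^*}(w)$ norm of $(f-\lambda)_\pm$ that reappears since $(p^*/n'-1)p'=p^*$. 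This Sobolev-style bootstrap is what replaces your unresolved summation-over-scales. If you want to salvage your route, you would essentially need to carry out that bootstrap yourself (or reprove a Sawyer--Wheeden theorem from scratch); as written, the proposal has a real gap exactly where the mathematical content lies.
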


From that basic estimate, we derive a quantitative Poincar\'e-Sobolev inequality for $A_1$ weights. Further, we explore the sharpness of the result.

\begin{corollary}\label{thm:P(p,p*)-local-avg-A1}
Under the same hypothesis of Theorem \ref{thm:PoincareSobolev-2weights}, if in addition we have that $w\in A_1$, then
\begin{equation}\label{eq:P(p,p*)-local-avg-A1}
\left (\frac{1}{w(Q)}\int_Q|f-f_{Q,w}|^{p*}w\right )^\frac{1}{p*}\leq c\, [w]_{A_1}\ell(Q)\left (\frac{1}{w(Q)}\int_Q |\nabla f|^p w\right )^\frac{1}{p}.
\end{equation} 

 Furthermore, the result is sharp in the case $p=1$ in the sense that $[w]_{A_1}$ cannot be replaced by $\psi([w]_{A_1})$ with $\psi:[1,\infty)\to (0,\infty)$  satisfying $\psi(t)=o(t)$ as $t\rightarrow\infty$. 

\end{corollary}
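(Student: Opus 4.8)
**Proof plan for Corollary \ref{thm:P(p,p*)-local-avg-A1}.**

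The plan is to derive the inequality by feeding the $A_1$ hypothesis into the basic two-weight estimate \eqref{eq:P(p,p*)-local-avg} of Theorem \ref{thm:PoincareSobolev-2weights}, and then to establish sharpness at $p=1$ by an explicit family of weights and test functions. For the positive direction, start from
\begin{equation*}
\left (\int_Q|f-f_{Q,w}|^{p^*} wdx \right )^{\frac{1}{p^*}} \le C \left (\int_Q |\nabla f|^p \frac{(M^c(w\chi_Q))^{\frac{p}{n'}}}{w^{p-1}} dx \right )^{\frac{1}{p}},
\end{equation*}
and use the $A_1$ condition in the pointwise form $M^c(w\chi_Q)(x)\le M(w\chi_Q)(x)\le [w]_{A_1}\,w(x)$ for a.e. $x\in Q$. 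Substituting this bound into the weighted integrand gives
\begin{equation*}
\frac{(M^c(w\chi_Q))^{\frac{p}{n'}}}{w^{p-1}}\le [w]_{A_1}^{\frac{p}{n'}}\,\frac{w^{\frac{p}{n'}}}{w^{p-1}}=[w]_{A_1}^{\frac{p}{n'}}\,w,
\end{equation*}
since $\frac{p}{n'}-(p-1)=p(1-\frac1n)-p+1=1-\frac pn=\frac{p}{p^*}\cdot\frac{1}{1}$... more simply $\frac{p}{n'}-(p-1)=1-\frac pn$, and one checks that with $\frac1p-\frac1{p^*}=\frac1n$ this exponent is exactly $1$ when $p=1$; for general $1\le p<n$ one keeps $w^{1-p/n}$ and absorbs it, or rather: the cleanest route is to note $\frac{p}{n'}-(p-1)$ is not $1$ in general, so instead I would bound $M^c(w\chi_Q)\le [w]_{A_1} w$ and write $\frac{(M^c(w\chi_Q))^{p/n'}}{w^{p-1}}\le [w]_{A_1}^{p/n'} w^{p/n'-(p-1)}$, then use that $w\in A_1\subset A_p$ with $[w]_{A_p}\le [w]_{A_1}$ together with a Hölder step to pass from $|\nabla f|^p w^{p/n'-(p-1)}$ to $|\nabla f|^p w$ at the cost of a further power of $[w]_{A_1}$; finally divide both sides by $w(Q)^{1/p^*}$ and use $w(Q)^{1/p^*}=w(Q)^{1/p}w(Q)^{-1/n}$ against the $A_1$ lower bound $w(Q)\gtrsim [w]_{A_1}^{-1}\ell(Q)^n\,\mathrm{ess\,inf}_Q w$ to convert the normalization, tracking that the total power of $[w]_{A_1}$ comes out to $1$. (The exponent bookkeeping here is the delicate point and I would carry it out carefully; the case $p=1$, where $n'=\frac{n}{n-1}$ and the exponent $\frac{1}{n'}=1-\frac1n$ makes $w^{1/n'}/w^{0}=w^{1/n'}$ combine cleanly with the normalization, is the model case.)

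For the sharpness at $p=1$, I would argue by contradiction: suppose \eqref{eq:P(p,p*)-local-avg-A1} held with $[w]_{A_1}$ replaced by $\psi([w]_{A_1})$ for some $\psi(t)=o(t)$. Take $Q=[0,1]^n$ and the classical family of power weights $w_\delta(x)=|x|^{-n(1-\delta)}$ (or $|x-x_0|^{-n(1-\delta)}$ centered in $Q$) for small $\delta>0$, which are $A_1$ with $[w_\delta]_{A_1}\approx \frac{1}{\delta}$ as $\delta\to 0$. Against this weight one tests the inequality with functions $f$ adapted to the singularity — e.g. truncations/regularizations of $|x|$ or of $\log\frac1{|x|}$, or more precisely the extremizers for the weighted Sobolev embedding — and computes both sides explicitly. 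The left side $\big(\frac{1}{w_\delta(Q)}\int_Q |f-f_{Q,w_\delta}|^{1^*}w_\delta\big)^{1/1^*}$ with $1^*=n'=\frac{n}{n-1}$ and the right side $\ell(Q)\frac{1}{w_\delta(Q)}\int_Q|\nabla f|w_\delta$ are both finite and of comparable order in $\delta$, so that the ratio of left to right stays bounded below by a positive constant as $\delta\to 0$, forcing the constant in the inequality to be $\gtrsim 1$, hence $\psi([w_\delta]_{A_1})\gtrsim 1$, i.e. $\psi(1/\delta)\gtrsim 1$, contradicting $\psi(t)=o(t)$ — wait, that only gives $\psi$ bounded below, not a contradiction with $o(t)$. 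The correct mechanism must instead produce a \emph{growing} lower bound: one chooses $f=f_\delta$ so that the ratio (left side)/(right side) itself blows up like $[w_\delta]_{A_1}=1/\delta$ as $\delta\to 0$; concretely, with $w_\delta$ as above and $f_\delta$ the associated weighted Sobolev extremal, one shows $\text{(left)}/\text{(right)}\approx \frac1\delta\approx [w_\delta]_{A_1}$, so any valid constant $\psi([w_\delta]_{A_1})$ must satisfy $\psi(1/\delta)\gtrsim 1/\delta$, i.e. $\psi(t)\gtrsim t$, contradicting $\psi(t)=o(t)$.

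The main obstacle is the sharpness half: one needs a weight family whose $A_1$ characteristic is precisely understood (the power weights, with $[w_\delta]_{A_1}\sim 1/\delta$) together with a matching family of test functions that saturates the $(1^*,1)$ weighted Poincaré--Sobolev inequality and exhibits the linear-in-$[w]_{A_1}$ loss — computing both integrals to the right precision, including the correct normalization by $w_\delta(Q)$, and verifying that no cancellation in $f_\delta - (f_\delta)_{Q,w_\delta}$ destroys the lower bound. The positive direction is, by contrast, essentially a substitution into Theorem \ref{thm:PoincareSobolev-2weights} plus careful tracking of exponents of $[w]_{A_1}$, which I expect to be routine once the bookkeeping is set up.
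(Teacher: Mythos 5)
The positive direction is correct and, once the exponent bookkeeping is carried out, equivalent to the paper's argument: using $M^c(w\chi_Q)\lesssim [w]_{A_1}w$ and then the $A_1$ lower bound $w\gtrsim w(Q)/([w]_{A_1}|Q|)$ on $Q$, you collect powers $[w]_{A_1}^{1/n'}\cdot[w]_{A_1}^{1/n}=[w]_{A_1}$, and the geometric factors $(|Q|/w(Q))^{1/n}\cdot w(Q)^{1/p-1/p^*}=\ell(Q)$ come out cleanly. The paper reorganizes this by writing $w(Q)^{1/n}\le\ell(Q)\inf_Q(Mw)^{1/n}$ and pushing the infimum into the integral, producing $(Mw)^{p/n+p/n'}/w^{p}\cdot w=(Mw/w)^pw\le[w]_{A_1}^pw$ in one stroke, but the substance is the same. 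Your digression through ``H\"older and $A_1\subset A_p$'' is a red herring: the essential-infimum bound alone suffices and is lossless, so delete that step.

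For the sharpness half you have correctly identified the weight $w_\delta(x)=|x|^{\delta-n}$ with $[w_\delta]_{A_1}\sim1/\delta$ and the need for a linear blow-up of the ratio, but you leave the test function vague (``weighted Sobolev extremal'') and never compute. The paper takes $Q=(-1,1)^n$ and a piecewise-affine bump $f$ equal to $1$ on $(-\varepsilon,\varepsilon)^n$ and vanishing outside $(-2\varepsilon,2\varepsilon)^n$; after removing the average via Lemma \ref{lem:vanish}, the left-hand side is $\gtrsim\varepsilon^{\delta/p^*}$ while the normalized right-hand side without the $A_1$ factor is $\lesssim\delta^{1/p}\varepsilon^{\delta/p-1}$, so the ratio tends to $\varepsilon\,\delta^{-1/p}=\varepsilon\,[w_\delta]_{A_1}^{1/p}$ as $\delta\to0$ with $\varepsilon$ fixed; for $p=1$ this is linear in $[w_\delta]_{A_1}$, forcing $\psi(t)\gtrsim t$ and contradicting $\psi(t)=o(t)$. (Note the blow-up rate is $[w_\delta]_{A_1}^{1/p}$, not $[w_\delta]_{A_1}$, which is precisely why the corollary's sharpness claim is restricted to $p=1$ and why Proposition \ref{pro:LowerBoundBeta} only yields $\beta\ge1/p$ in general.) The logical skeleton of your contradiction argument is right; the explicit construction and the integral estimates are the substantive missing content.
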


 As mentioned before,  the conclusion of Corollary  \ref{thm:P(p,p*)-local-avg-A1} should be compared to the result in Corollary \ref{cor:MainCoro}, where this result is improved for large enough values of  $n$ and $p$ (more precisely $p\in (2n',n)$, $n>3$). It should be also compared to the necessary condition given in Proposition \ref{pro:LowerBoundBeta} regarding  the best possible exponent on the $A_1$ constant  in the case $p>1$. We will present an example showing that the best possible exponent for the $A_1$ constant in an inequality like \eqref{eq:P(p,p*)-local-avg-A1} is $1/p$. That  suggests that the conjecture on weighted Poincar\'e inequalities for $A_1$ weights should be the following:

\begin{conjecture}\label{conj:A1conjecture}
Let $w$ be an $A_1$ weight in $\mathbb{R}^n$, $n\ge 2$. Then if $1\leq p <n$ 
$$
\left (\frac{1}{w(Q)}\int_Q|f-f_{Q,w}|^{p*}w\right )^\frac{1}{p*}\le C [w]^{\frac{1}{p}}_{A_1}\ell(Q)\left (\frac{1}{w(Q)}\int_Q |\nabla f|^p w\right )^\frac{1}{p}.
$$
\end{conjecture}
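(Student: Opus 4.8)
Since Conjecture \ref{conj:A1conjecture} is open we limit ourselves to describing the strategy we would pursue and to isolating the step where the genuine difficulty lies. The first observation is that the $(p,p)$ content of the inequality is \emph{not} the obstruction: applying Corollary \ref{cor:Poincare(p,p)-twoweight} with $u=v=w$, and using that $A_1\subset A_p$ with $[w]_{A_p}\le[w]_{A_1}$, one already has
$$
\left(\frac{1}{w(Q)}\int_Q|f-f_Q|^{p}\,w\,dx\right)^{1/p}\le c_n\,[w]_{A_1}^{1/p}\,\ell(Q)\left(\frac{1}{w(Q)}\int_Q|\nabla f|^{p}\,w\,dx\right)^{1/p},
$$
which already has the conjectured dependence on $[w]_{A_1}$. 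Hence the whole problem is to carry out the Sobolev self-improvement $p\to p^{*}$ \emph{at no additional cost in the $A_1$ constant}. Since $\frac1p=\frac1n+\frac1{p^{*}}$, the admissible power $[w]_{A_1}^{1/p}$ should split as $[w]_{A_1}^{1/n}$, arising from the geometry of the class $A_1$, times $[w]_{A_1}^{1/p^{*}}$, arising from the real-variable passage from a local weak-type estimate to the strong one.

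The natural vehicle is the functional framework developed above, with $\mu=|\nabla f|^{p}\,w\,dx$ and $a(Q)=\ell(Q)\big(\tfrac{1}{w(Q)}\mu(Q)\big)^{1/p}$. Lemma \ref{lem:new-smallAp} applied with $q=1$ supplies exactly the first half: it yields the \emph{full} Sobolev condition $D_{p^{*}}(w)$ for $a$ with $\|a\|\le[w]_{A_1}^{1/n}$, while $a$ satisfies $SD^{n}_{p}(w)$ with norm $1$ by Lemma \ref{lem:L-small}. One would then feed these two facts into the good-$\lambda$ argument of Theorem \ref{thm:goodL-aQ} followed by the truncation step. The trouble is that, as it is currently run, Theorem \ref{thm:goodL-aQ} produces the factor $\|a\|\,[w]_{A_p}^{1/p}$ at the weak-type level and the truncation loses a further $[w]_{A_p}^{1/p}$, giving the constant $[w]_{A_1}^{1/n}[w]_{A_p}^{2/p}$ of Corollary \ref{cor:MainCoro}, which overshoots $[w]_{A_1}^{1/p}$ by $[w]_{A_1}^{1/n+1/p}$ in the worst case; the other available estimate, the constant $c\,[w]_{A_1}$ of Corollary \ref{thm:P(p,p*)-local-avg-A1}, overshoots by $[w]_{A_1}^{1/p'}$ and is nearly sharp only as $p\to1$ --- the endpoint $p=1$ being precisely the one where the conjecture is already a theorem.

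The plan is therefore to re-examine the good-$\lambda$ and weak-implies-strong arguments under the \emph{stronger} hypothesis $w\in A_1$ and to show that in that regime the passage $D_{p^{*}}(w)\Rightarrow$ strong $(p^{*},p)$ costs only $[w]_{A_1}^{1/p^{*}}$. The $A_p$ factors in Theorem \ref{thm:goodL-aQ} enter through a Calder\'on--Zygmund level-set comparison of the type $w(E)\gtrsim[w]_{A_p}^{-1}(|E|/|Q|)^{p}w(Q)$ and through the weak-type endpoint; for $A_1$ weights one has instead the linear comparison $w(E)\ge[w]_{A_1}^{-1}(|E|/|Q|)\,w(Q)$, and the idea is to exploit this linear estimate only to a fractional power, interpolating it against the $L^{p^{*}}$ scale so as to bring the relevant exponent down to the critical $1/p^{*}$. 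A second, and perhaps cleaner, incarnation of the same idea is to sharpen the two-weight estimate of Theorem \ref{thm:PoincareSobolev-2weights}: its specialization to $A_1$ uses the bound $M^{c}(w\chi_Q)\le[w]_{A_1}\,w$ at full strength, and interpolating this with a trivial bound, or using the sharp reverse H\"older exponent $1+c\,[w]_{A_1}^{-1}$ valid for $A_1$ weights, should allow one to replace $[w]_{A_1}$ by $[w]_{A_1}^{1/p}$ in \eqref{eq:P(p,p*)-local-avg-A1} for $1<p<n$.

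The hard part is exactly this last point, and it is genuinely hard. The classical route --- bounding $|f-f_Q|$ by $\ell(Q)$ times a fractional integral of $|\nabla f|$ and then invoking Hardy--Littlewood--Sobolev --- is unavailable here, since HLS is false with a common $A_1$ weight on both sides (already for the power weights $|x|^{-\beta}$ with $\beta>0$); this is exactly what forces one into the truncation machinery, which, unlike in the unweighted case, uses the non-doubling compensation built into $A_1$ \emph{twice}: once to reach the exponent $p^{*}$ (the $[w]_{A_1}^{1/n}$ above) and once again in the passage from a local weak-type inequality to the strong one. Proposition \ref{pro:LowerBoundBeta} shows $1/p$ is the correct floor, so the remaining issue is purely one of efficiency on the upper side: collapsing that double use into a single sharp estimate, or devising a bespoke $A_1$-extrapolation scheme that bootstraps the proven endpoint $p=1$ of Corollary \ref{thm:P(p,p*)-local-avg-A1} to the whole range $1<p<n$ while preserving the dependence $[w]_{A_1}^{1/p}$, is the new ingredient the conjecture requires.
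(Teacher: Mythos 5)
You have correctly recognized that this statement is a \emph{conjecture} in the paper: no proof is given, and the only supporting evidence the authors provide is (i) Proposition \ref{pro:LowerBoundBeta}, showing that $1/p$ is a lower bound for the admissible exponent on $[w]_{A_1}$, and (ii) the proven endpoint $p=1$ from Corollary \ref{thm:P(p,p*)-local-avg-A1}, where the linear dependence $[w]_{A_1}$ coincides with $[w]_{A_1}^{1/p}$. Since there is no ``paper's own proof'' to compare against, the right question is whether your discussion is consistent with, and faithful to, the paper's partial results, and it is.

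Your bookkeeping is accurate and in fact more detailed than what the paper states explicitly. The reduction of the $(p,p)$ case to Corollary \ref{cor:Poincare(p,p)-twoweight} with the bound $[w]_{A_p}\le[w]_{A_1}$ is correct, as is the arithmetic split $\tfrac1p=\tfrac1n+\tfrac1{p^*}$ suggesting the ideal factorization $[w]_{A_1}^{1/p}=[w]_{A_1}^{1/n}\cdot[w]_{A_1}^{1/p^*}$. Your reading of Lemma \ref{lem:new-smallAp} at $q=1$ (giving $D_{p^*}(w)$ with $\|a\|\le[w]_{A_1}^{1/n}$ and $p^*_w=p^*$) and of Lemma \ref{lem:L-small} (giving $SD^n_p(w)$ with norm $1$) is exactly right, and the overshoot computations for Corollary \ref{cor:MainCoro} ($[w]_{A_1}^{1/n+2/p}$ in the worst case, overshooting by $[w]_{A_1}^{1/n+1/p}$) and for Corollary \ref{thm:P(p,p*)-local-avg-A1} (overshooting by $[w]_{A_1}^{1/p'}$) are both correct. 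You have also correctly located the two places where the good-$\lambda$ argument (Theorem \ref{thm:goodL-aQ}) and the subsequent truncation each spend a factor $[w]_{A_p}^{1/p}$. The strategies you sketch (exploiting the linear $A_1$ level-set comparison only to a fractional power, sharpening the $M^c(w\chi_Q)\le[w]_{A_1}w$ bound in Theorem \ref{thm:PoincareSobolev-2weights} via the sharp reverse H\"older exponent, or extrapolating from the proven endpoint $p=1$) are reasonable directions, though the remark about the failure of a one-weight HLS inequality for $A_1$ weights is stated somewhat loosely. Since the conjecture remains open, no more could have been expected here; what you have written is a sound and honest assessment of why the existing machinery falls short and where the missing idea would have to enter.
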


\subsection{ Generalized  Poincar\'e inequalities with polynomials}

As we already mentioned we  also present an extension of our main result in Theorem \ref{thm:Lp(w)-a(Q)-clean} that can be used to obtain self-improving properties within the context of generalized  Poincar\'e inequalities with polynomials. This is intimately related to understanding higher order derivative estimates where  the \emph{truncation method} is not available.

 Let $m\in \mathbb{N}$. We denote by $P_Q f$ the projection of the function $f$ over the space $\mathcal{P}_m$ of polynomials of degree at most $m$ in $n$ variables on $Q$ (see Section \ref{sec:higher} for a more detailed discussion).

\begin{theorem}\label{thm:higher-a(Q)}
Let $w$ be any $A_\infty$ weight. Consider also the functional   $a$ such that for some $p \ge 1$ it satisfies condition $SD^s_p(w)$ from \eqref{eq:SDp} with $s>1$ and constant $\|a\|$. Let $f$ be a locally integrable function such that
\begin{equation*}
\frac{1}{|Q|}\int_{Q} |f-P_{Q}f| \le a(Q),
\end{equation*}
for every cube $Q$. Then, there exists a dimensional constant $C_{n,m}$ such that for any cube $Q$
\begin{equation*}
\left( \frac{1}{ w(Q)  } \int_{ Q }   |f -P_{Q}f|^p     \,wdx\right)^{\frac{1}p}   \leq  C_{n,m} 2^\frac{s+1}{p'}s \|a\|^s  a(Q) 
\end{equation*}
where $C_n$ is a dimensional constant. When $p=1$, the factor $2^\frac{s+1}{p'}$ vanishes.
\end{theorem}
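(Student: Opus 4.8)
The plan is to adapt the proof of Theorem \ref{thm:Lp(w)-a(Q)-clean} to the polynomial setting, the only serious changes being the replacement of averages $f_Q$ by projections $P_Q f$ and the bookkeeping of the constants that this substitution introduces. First I would recall the structure of the Journ\'e-type argument behind Theorem \ref{thm:Lp(w)-a(Q)-clean}: one performs, for a fixed cube $Q$ and level $\lambda$, a Calder\'on--Zygmund stopping-time decomposition of $|f-f_Q|$ (dyadic subcubes where the average first exceeds $\lambda a(Q)$), obtains from the stopping condition that the resulting family $\{Q_i\}$ has $\sum_i |Q_i| \le C |Q|/\lambda$, hence is $L$-small with $L \sim \lambda/C$, and then feeds this into the $SD_p^s(w)$ hypothesis to get a gain $(1/L)^{p/s}$ in the recursive estimate for the distribution function of $f-f_Q$ against $w$. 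Iterating this self-improvement and summing a geometric-type series yields the strong $L^p(w)$ bound with the constant $C_n\, s\,\|a\|^s$. I would carry the same three moves over verbatim, with $f_Q$ replaced throughout by $P_Q f$.

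The key modifications are the following. (i) On each stopping cube $Q_i$ one must control $|P_Q f - P_{Q_i} f|$ on $Q_i$ (the analogue of the telescoping $|f_Q - f_{Q_i}| \lesssim \lambda a(Q)$ in the scalar case). This uses the standard fact that the projection operators $P_Q$ are bounded on $L^1$ uniformly and behave well under restriction: for $Q_i \subset Q$ one has $\sup_{Q_i}|P_Q f - P_{Q_i} f| \le C_{n,m}\, \frac{1}{|Q_i|}\int_{Q_i}|f - P_Q f| \le C_{n,m}\,\lambda\,\frac{|Q|}{|Q_i|}\,a(Q)$-type estimates, or more carefully a telescoping along the dyadic chain from $Q_i$ to $Q$ together with the stopping inequality; this is exactly the place where the polynomial machinery of Section \ref{sec:higher} (comparability of $P_Q f$ on nested cubes, $L^1$-$L^\infty$ bounds for polynomials on a cube) must be invoked. (ii) The hypothesis is now $\frac{1}{|Q|}\int_Q |f - P_Q f| \le a(Q)$, so each time we localize to a stopping cube we should compare $\frac{1}{|Q_i|}\int_{Q_i}|f - P_{Q_i} f|$ with $\frac{1}{|Q_i|}\int_{Q_i}|f - P_Q f|$, losing only a dimensional constant. (iii) The extra factor $2^{(s+1)/p'}$: when $p>1$ the recursion now has to absorb, in addition to the $(1/L)^{p/s}$ gain, the error from the polynomial comparison, and tracking this through the geometric summation produces a bounded multiplicative constant which one can arrange to be $2^{(s+1)/p'}$; when $p=1$ the dual exponent $p'=\infty$ makes $2^{(s+1)/p'}=1$, consistently with the stated vanishing.

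Concretely, the steps in order: (1) reduce to proving the weak-type distributional estimate $\frac{w(\{x\in Q:\ |f-P_Qf|>\lambda a(Q)\})}{w(Q)} \le (\text{geometric decay in }\lambda)$, as in the proof of Theorem \ref{thm:general-a(Q)}/\ref{thm:Lp(w)-a(Q)-clean}; (2) run the Calder\'on--Zygmund stopping-time on $\frac{1}{|P|}\int_P |f - P_Q f|$ at height $\lambda a(Q)$ to produce the maximal family $\{Q_i\}$, establish $\sum |Q_i| \le \frac{C_{n,m}}{\lambda}|Q|$ hence $\{Q_i\}\in S(L)$ with $L = c\lambda$; (3) on $Q\setminus\bigcup Q_i$ use the Lebesgue differentiation theorem to bound $|f - P_Q f| \le C_{n,m}\lambda a(Q)$ a.e.; on each $Q_i$ use the polynomial comparison of (i) to reduce $|f - P_Q f|$ to $|f - P_{Q_i} f|$ plus $O(\lambda a(Q))$; (4) apply $SD_p^s(w)$ to the family $\{Q_i\}\in S(L)$ to get $\sum_i a(Q_i)^p w(Q_i) \le \|a\|^p L^{-p/s} a(Q)^p w(Q)$, set up the resulting recursive inequality for $\Phi(Q):=\big(\frac{1}{w(Q)}\int_Q |f - P_Q f|^p w\big)^{1/p}$ (or rather for a truncated/level version of it), iterate, and sum. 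The main obstacle I anticipate is step (i)/(3): making the comparison between $P_Q f$ and $P_{Q_i} f$ quantitatively clean enough — with only dimensional and $m$-dependent constants and with the correct power of $2$ — so that the iteration closes with precisely the stated constant $C_{n,m}\,2^{(s+1)/p'} s\,\|a\|^s$; everything else is a faithful transcription of the proof of Theorem \ref{thm:Lp(w)-a(Q)-clean}. I would isolate the needed properties of the projections $P_Q$ as a preliminary lemma in Section \ref{sec:higher} and then the proof here becomes essentially identical to that of Theorem \ref{thm:Lp(w)-a(Q)-clean}.
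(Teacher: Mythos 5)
Your overall plan is essentially the same as the paper's: Calder\'on--Zygmund stopping time for $|f-P_Qf|/a(Q)$ at a level $L$, derive $L$-smallness of the stopping family, bound the unstopped part by $L$ via Lebesgue differentiation, compare $P_Qf$ with $P_{Q_i}f$ on each stopping cube, feed the $SD^s_p(w)$ condition into a self-referential inequality for $X=\sup_Q\left(\frac{1}{w(Q)}\int_Q|f-P_Qf|^p w\right)^{1/p}$, and optimize $L$. That structure is correct. (One small presentational note: the paper does not iterate a distributional estimate or sum a geometric series; it proves the strong bound directly via a single self-improving inequality for $X$, exactly as you later suggest with $\Phi(Q)$.)

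However, the step you flag as the ``main obstacle'' --- the comparison between $P_Qf$ and $P_{Q_i}f$ on the stopping cubes --- is genuinely where your write-up has a gap, and the first bound you suggest would actually fail. Writing $\sup_{Q_i}|P_Qf-P_{Q_i}f|\lesssim \lambda\,\frac{|Q|}{|Q_i|}\,a(Q)$ is useless because $|Q|/|Q_i|$ is unbounded over the stopping family; and your fallback of ``telescoping along the dyadic chain'' is not needed and is not how the argument should go. The clean resolution is a one-step projection trick: since $P_Qf\in\mathcal P_m$ and $P_{Q_i}$ is the orthogonal projection onto $\mathcal P_m$ on $Q_i$, one has $P_{Q_i}(P_Qf)=P_Qf$ on $Q_i$, and hence
\begin{equation*}
P_{Q_i}f-P_Qf = P_{Q_i}\bigl(f-P_Qf\bigr)\quad\text{on }Q_i .
\end{equation*}
Now apply the uniform $L^1\to L^\infty$ bound for $P_{Q_i}$ (inequality \eqref{eq:Linfty-PQ}) together with the Calder\'on--Zygmund stopping condition $\frac{1}{|Q_i|}\int_{Q_i}|f-P_Qf|\le 2^n L\,a(Q)$: this gives $\sup_{Q_i}|P_{Q_i}f-P_Qf|\le C_{n,m}\,2^n L\,a(Q)$, which is exactly the bound needed to close the recursion. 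You must use the upper bound in the stopping inequality (the analogue of \eqref{eq:CZ3} / \eqref{eq:Poly-CZ3}), not a global bound $\int_{Q_i}|f-P_Qf|\le\int_Q|f-P_Qf|$. Once this is in place, the triangle inequality $|f-P_Qf|^p\le 2^{p-1}\bigl(|f-P_{Q_i}f|^p+|P_{Q_i}f-P_Qf|^p\bigr)$ on each $Q_i$ produces the factor $2^{1/p'}$ in the recursion, and the subsequent optimal choice $L=2e\,2^{s/p'}\|a\|^s$ yields the stated constant $C_{n,m}\,2^{(s+1)/p'}\,s\,\|a\|^s$, with the factor collapsing to $1$ when $p=1$.
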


As a consequence of this theorem, we are now able to present a proof of inequality \eqref{eq:Poincare(pp)-higher} which is an extension of the result in Corollary \ref{cor:Poincare(p,p)-twoweight}.                                                          

\begin{corollary}\label{cor:Poincare(pp)-higher}
Let  \,$(u,v)\in A_p,  u\in A_\infty$.   Then the following Poincar\'e $(p,p)$ inequality holds
$$
\left (\frac{1}{u(Q)}\int_Q |f- P_Qf |^p\, u\ dx\right )^{\frac{1}{p}}\le C[u,v]^{\frac{1}{p}}_{A_p}\ell(Q)^m \left (\frac{1}{u(Q)}\int_Q |\nabla^m f|^{p} \, v \ dx\right )^{\frac{1}{p}},
$$
where $C=C_{n,m}$ is a dimensional constant.
\end{corollary}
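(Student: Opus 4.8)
The plan is to derive Corollary \ref{cor:Poincare(pp)-higher} from Theorem \ref{thm:higher-a(Q)} exactly as Corollary \ref{cor:Poincare(p,p)-twoweight} is derived from Theorem \ref{thm:Lp(w)-a(Q)-clean}, the only new ingredient being a higher-order Poincar\'e inequality with polynomials in place of the classical first-order one. First I would fix the functional
\begin{equation*}
a(Q)=\ell(Q)^m\left(\frac{1}{u(Q)}\int_Q |\nabla^m f|^{p}\,v\,dx\right)^{1/p},
\end{equation*}
which is a particular instance of the generic fractional functional \eqref{eq:general-a(Q)} (with $\alpha=m$, $w=u$, and $d\mu=|\nabla^m f|^p v\,dx$), so by Lemma \ref{lem:L-small} it satisfies $SD^{n/m}_p(u)$ with norm $\|a\|\le 1$, hence in particular $SD^s_p(u)$ with $s=\max\{n/m,2\}>1$ and $\|a\|\le 1$. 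Since $u\in A_\infty$, Theorem \ref{thm:higher-a(Q)} then applies verbatim once we check the starting hypothesis
\begin{equation*}
\frac{1}{|Q|}\int_Q |f-P_Qf|\,dx\le C_{m,n}\,[u,v]^{1/p}_{A_p}\,a(Q).
\end{equation*}

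The second step is to establish precisely this unweighted pointwise-average estimate. Here I would use the classical representation of $f-P_Qf$ by an iterated (Riesz-type) potential of $\nabla^m f$: there is a dimensional constant with
\begin{equation*}
|f(x)-P_Qf(x)|\le C_{m,n}\int_Q \frac{|\nabla^m f(y)|}{|x-y|^{n-m}}\,dy\qquad x\in Q,
\end{equation*}
which is the standard Sobolev representation formula for polynomials (see, e.g., the discussion in Section \ref{sec:higher}). Averaging in $x$ over $Q$ and using Fubini gives
\begin{equation*}
\frac{1}{|Q|}\int_Q|f-P_Qf|\,dx\le C_{m,n}\,\ell(Q)^m\,\frac{1}{|Q|}\int_Q |\nabla^m f|\,dy,
\end{equation*}
since $\int_Q |x-y|^{m-n}\,dx\le C_{m,n}\ell(Q)^m$ uniformly in $y\in Q$. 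It then remains to pass from the unweighted $L^1$ average of $|\nabla^m f|$ to the weighted quantity appearing in $a(Q)$. Writing $|\nabla^m f| = |\nabla^m f| v^{1/p} v^{-1/p}$ and applying H\"older with exponents $p$ and $p'$,
\begin{equation*}
\frac{1}{|Q|}\int_Q |\nabla^m f|\,dy \le \left(\frac{1}{|Q|}\int_Q |\nabla^m f|^p v\right)^{1/p}\left(\frac{1}{|Q|}\int_Q v^{-p'/p}\right)^{1/p'},
\end{equation*}
and then one uses the two-weight $A_p$ condition $[u,v]_{A_p}$ to absorb $\left(\frac{1}{|Q|}\int_Q v^{1-p'}\right)^{1/p'}$ against $\left(\frac{1}{|Q|}\int_Q u\right)^{-1/p}$, producing exactly $[u,v]^{1/p}_{A_p}$ together with $\left(\frac{1}{u(Q)}\int_Q |\nabla^m f|^p v\right)^{1/p}$. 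This yields the required hypothesis with constant $C_{m,n}[u,v]^{1/p}_{A_p}$.

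The final step is to invoke Theorem \ref{thm:higher-a(Q)} with this $a$ and $s=\max\{n/m,2\}$, $\|a\|\le1$, which gives
\begin{equation*}
\left(\frac{1}{u(Q)}\int_Q |f-P_Qf|^p u\right)^{1/p}\le C_{n,m}\, a(Q) \cdot C_{m,n}[u,v]^{1/p}_{A_p},
\end{equation*}
and plugging in the definition of $a(Q)$ gives precisely the claimed inequality with $C=C_{n,m}$. I expect the only genuinely substantive point to be the higher-order Poincar\'e representation formula and the correct bookkeeping of the polynomial projection $P_Q f$ — in particular the fact that $f-P_Qf$ is controlled by the potential of $\nabla^m f$ (rather than lower-order derivatives), which is standard but should be cited from Section \ref{sec:higher}; everything else is the same Fubini/H\"older/$A_p$ routine used in the first-order case, and the passage through Theorem \ref{thm:higher-a(Q)} is automatic since the functional is of the form \eqref{eq:general-a(Q)}.
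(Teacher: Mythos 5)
Your plan is correct and is exactly the route the paper has in mind: cite a higher-order $(1,1)$ Poincar\'e inequality with polynomials, upgrade it to a two-weight starting hypothesis via the $A_p$ condition \eqref{two weight Ap charact}, identify the resulting functional $a$ with the generic fractional form \eqref{eq:general-a(Q)} (with $\alpha=m$, $w=u$, $d\mu=|\nabla^m f|^p v\,dx$) so that Lemma \ref{lem:L-small} yields $a\in SD^{n/m}_p(u)$, and then invoke Theorem \ref{thm:higher-a(Q)}. The paper itself does not spell out this proof; your write-up is a faithful filling-in of the claimed ``consequence.''

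One point to tidy up: the pointwise Riesz-potential bound
\begin{equation*}
|f(x)-P_Qf(x)|\le C_{m,n}\int_Q \frac{|\nabla^m f(y)|}{|x-y|^{n-m}}\,dy
\end{equation*}
is not literally stated in Section \ref{sec:higher}. What the paper records there is the averaged estimate \eqref{higherorderpoincare}, valid for \emph{some} degree-$(m-1)$ polynomial $\pi_Q$, together with the $L^\infty$ bound \eqref{eq:Linfty-PQ} on the projection and the optimality comparison $\avgint_Q|f-P_Qf|\lesssim\inf_\pi\avgint_Q|f-\pi|$. The cleanest path is therefore to use \eqref{higherorderpoincare} directly and then pass from $\pi_Q$ to $P_Qf$ via
\begin{equation*}
\avgint_Q|f-P_Qf|\le\avgint_Q|f-\pi_Q|+\avgint_Q|P_Q(\pi_Q-f)|\lesssim\avgint_Q|f-\pi_Q|,
\end{equation*}
using that $P_Q\pi_Q=\pi_Q$ and \eqref{eq:Linfty-PQ}; after that the H\"older/$A_p$ step and the application of Theorem \ref{thm:higher-a(Q)} proceed exactly as you describe. (Your pointwise version with $P_Qf$ is also true, but it requires the same small projection argument, since the standard representation formula is stated for a different choice of polynomial.) Everything else, in particular your observation that $SD^{n/m}_p(u)\subset SD^s_p(u)$ for $s\ge n/m$ so one may take $s=\max\{n/m,2\}$ when $m\ge n$, is correct.
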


It is also clear that we can derive some results in the spirit of Theorem \ref{thm:ptimes-Aq} but we will not pursue in this direction.

\subsection{  Lack of Poincar\'e inequalities for \emph{all} \texorpdfstring{$RH_{\infty}$}{Ainfty} weights and the \emph{failure} of Poincar\'e inequalities in the range  \texorpdfstring{$p<1$}{p<1}  }

In this paper we also address the problem of characterizing the class of weight functions such that a Poincar\'e $(p,p)$ inequality holds. Far from being able to provide a complete answer, we include the following negative result which says that the class $RH_\infty$ is too big.  We recall that a weight $w$ belongs to the the class $RH_\infty$ if there is a constant $c$ such that
$$
\sup_Q w \leq c\ \avgint_{Q} w. 
$$
This definition means that $w$ satisfies a reverse H\"older's inequality for any exponent and hence $RH_\infty \subset A_\infty$. 
It is well known that $|\pi| \in RH_\infty$ when $\pi$ is a polynomial in $\mathbb{R}^n$.  It is also known that $(M\mu)^{-\lambda} \in RH_{\infty}$ 
(see Lemma \ref{RHinfity})

\begin{theorem}\label{thm:RHinfty} Let $1\le p <\infty$ and suppose that a Poincar\'e $(p,p)$ inequality holds for the class of weights 
$RH_{\infty}$, namely that 

\begin{equation}\label{eq:P(p,p)-RHinfty}
\inf_a \left (\int_Q|f-a|^p \,wdx\right )^\frac{1}{p}\le c\,\ell(Q)\left (\int_Q |\nabla f|^p \,w dx\right )^\frac{1}{p} \qquad w \in RH_{\infty}
\end{equation}
with constant $c$ independent of the cube $Q\subset \mathbb{R}^n$.  Then, for every $0<q<1$ it also holds that 
\begin{equation}\label{eq:P(p,p)-p<1}
\inf_a \left (\int_Q|f-a|^q \ dx\right )^\frac{1}{q}\le c\,\ell(Q)\left (\int_Q |\nabla f|^q \ dx\right )^\frac{1}{q}, 
\end{equation}
with constant $c$ independent of $Q$.  Since this is {\bf false},  \eqref{eq:P(p,p)-RHinfty} cannot hold for \emph{every} $w \in RH_{\infty}$. In particular, it follows that
\eqref{eq:P(p,p)-RHinfty} cannot hold for \emph{every} $w \in A_{\infty}$
\end{theorem}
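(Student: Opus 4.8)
The plan is to show that the Poincaré inequality \eqref{eq:P(p,p)-RHinfty} for the class $RH_\infty$ forces the unweighted $(q,q)$ Poincaré inequality \eqref{eq:P(p,p)-p<1} for every $0<q<1$, and then invoke the known failure of the latter to conclude. The bridge between the two statements is an extrapolation-type argument: the idea is to pick, for each fixed cube $Q$ and each fixed function $f$, a good weight $w \in RH_\infty$ for which the weighted right-hand side $\int_Q |\nabla f|^p w$ is comparable to the $L^q$ norm of $|\nabla f|$ while the weighted left-hand side controls the $L^q$ norm of $|f-a|$. The natural candidate is $w = (M(|\nabla f|^s \chi_Q))^{-\lambda}$ (or a similar power of a maximal function), which lies in $RH_\infty$ with a uniform constant by the lemma cited in the excerpt ($(M\mu)^{-\lambda}\in RH_\infty$), for appropriate small $\lambda>0$.

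First I would fix $0<q<1$ and $1\le p<\infty$, and set $g = |\nabla f|$. Choosing exponents so that $q = p(1-\lambda)/\text{(something)}$ — concretely, writing $w = g_Q^{-\lambda}$ where $g_Q$ is a maximal-type regularization of $g$ on $Q$ and $\lambda$ is chosen so that $p(1-\lambda\cdot\frac{\text{power}}{\ldots}) = q$ — one arranges by Hölder's inequality (with the small exponent) and the $L^1$-boundedness properties of the maximal function the two-sided control
\begin{equation*}
\int_Q |\nabla f|^p w \, dx \lesssim \left(\int_Q |\nabla f|^q dx\right)^{\text{appropriate power}},
\end{equation*}
with the reverse-type bound on the left-hand side handled by reverse Hölder / the pointwise lower bound $w \gtrsim (\text{stuff})$ coming from $w$ being a negative power of a maximal function. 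The delicate bookkeeping is to match all the exponents so that the homogeneity in $f$ is correct and the scaling in $\ell(Q)$ is preserved; since everything is homogeneous of degree one in $f$ this should close, but one must be careful that $\lambda$ can indeed be taken in the admissible range $(0,1)$ (or $(0,1/n')$, whatever the $RH_\infty$ lemma requires) uniformly in the data, which forces a constraint that is satisfied precisely because $q<1<p$ — note this is where the hypothesis $q<1\le p$ is essential.

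After establishing \eqref{eq:P(p,p)-p<1} for all $0<q<1$ with a constant independent of $Q$, the final step is to recall that this is \textbf{false}: the $(q,q)$ Poincaré inequality fails for $0<q<1$, which is classical (one exhibits a function — for instance a suitable truncation/bump — for which $\|f-a\|_{L^q(Q)}$ cannot be controlled by $\ell(Q)\|\nabla f\|_{L^q(Q)}$ for any constant $a$; the obstruction is exactly that $L^q$ with $q<1$ is not normable and the gradient can be made very small in $L^q$ while $f$ oscillates). This contradiction shows \eqref{eq:P(p,p)-RHinfty} cannot hold for all $w\in RH_\infty$, and since $RH_\infty \subset A_\infty$, a fortiori it cannot hold for all $w \in A_\infty$.

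\textbf{Main obstacle.} The heart of the matter — and the step I expect to be most technical — is the construction of the test weight $w\in RH_\infty$ and the verification that it simultaneously (i) has $RH_\infty$ constant bounded uniformly in $Q$ and $f$, (ii) makes the right-hand side weighted integral comparable to a power of the unweighted $L^q$-norm of $\nabla f$, and (iii) makes the left-hand side weighted integral dominate a power of the unweighted $L^q$-norm of $f-a$. Balancing the exponents $p$, $q$, $\lambda$, $n$ so that all three hold at once, with correct homogeneity and correct powers of $\ell(Q)$, is the real content; everything else (the $RH_\infty$-membership lemma, the failure of sub-unit-exponent Poincaré) is quotable.
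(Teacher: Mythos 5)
Your strategy is the right one, and it is in fact the paper's own: build a test weight $w\in RH_\infty$ as a negative power of a (modified) maximal function of $|\nabla f|$, feed it into the hypothesis, and balance exponents to recover the unweighted $(p_0,p_0)$ inequality for $p_0<1$, which is known to be false by \cite[p.~224]{BK94}. Both of the quotable inputs you name --- $(M\mu)^{-\lambda}\in RH_\infty$ with a uniform constant, and the classical failure of sub-unit Poincar\'e --- are exactly what the paper uses. But as you say yourself, the proposal defers the entire quantitative content to ``an appropriate choice of exponents,'' and there is a genuine conceptual slip hiding in how you propose to balance them.

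The slip is your item (iii): you want the single weight $w$ to satisfy both $\int_Q |\nabla f|^p w\lesssim (\int_Q|\nabla f|^{p_0})^{\bullet}$ \emph{and} $\int_Q|f-a|^p w\gtrsim (\int_Q|f-a|^{p_0})^{\bullet}$. The second estimate cannot be had unconditionally: $w$ is built from $\nabla f$ alone, so it may well be tiny precisely where $|f-a|$ is large, and no pointwise lower bound on $w$ in terms of $f-a$ is available. The paper's proof never attempts (iii). Instead it H\"older-factors the unweighted $L^{p_0}$-norm of $f-a$ itself, which routes the ``correction'' through the gradient side where it can be absorbed. Concretely, fix $p_0\in(0,1)$, set $\alpha=\tfrac{p-p_0}{p}$, $\varepsilon=p_0/2$, $M_\varepsilon(h):=M(|h|^\varepsilon)^{1/\varepsilon}$, and $w:=M_\varepsilon(\chi_Q|\nabla f|)^{-\alpha p}\in RH_\infty$ by Lemma~\ref{RHinfity}. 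Write
\begin{equation*}
\Big(\int_Q|f-a|^{p_0}\Big)^{1/p_0}
=\Big(\int_Q|f-a|^{p_0}\,M_\varepsilon(\chi_Q|\nabla f|)^{-\alpha p_0}\,M_\varepsilon(\chi_Q|\nabla f|)^{\alpha p_0}\Big)^{1/p_0}
\le I\cdot II
\end{equation*}
by H\"older with exponent $p/p_0>1$, where $I=\big(\int_Q|f-a|^p\,w\big)^{1/p}$ and $II=\big(\int_Q M_\varepsilon(\chi_Q|\nabla f|)^{\alpha p_0(p/p_0)'}\big)^{\frac{1}{p_0(p/p_0)'}}$. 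Apply the hypothesis to $I$ and then use the pointwise tautology $|\nabla f|\le M_\varepsilon(\chi_Q|\nabla f|)$ on $Q$ (an \emph{upper} bound on $w$, note, the opposite direction from what you wrote) to get $|\nabla f|^p w\le|\nabla f|^{p_0}$, hence $I\le C\ell(Q)(\int_Q|\nabla f|^{p_0})^{1/p}$. For $II$, the choice of $\alpha$ gives $\alpha(p/p_0)'=1$, so the integrand is $M_\varepsilon(\chi_Q|\nabla f|)^{p_0}=M(\chi_Q|\nabla f|^{p_0/2})^2$, and the $L^2$-boundedness of $M$ yields $II\lesssim(\int_Q|\nabla f|^{p_0})^{\frac{p-p_0}{p_0p}}$. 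Since $\frac1p+\frac{p-p_0}{p_0p}=\frac1{p_0}$, the two pieces combine to the $(p_0,p_0)$ inequality, and the contradiction follows. So the fix to your plan is: abandon the hoped-for lower bound (iii), and instead run the Rubio de Francia factorization above --- that is precisely the ``extrapolation-type'' trick you gesture toward.
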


We recall here that there is an example from \cite[p.224]{BK94} proving that \eqref{eq:P(p,p)-p<1} fails in general. 
The proof of Theorem \ref{thm:RHinfty}, which can be seen as an application of some extrapolation type arguments, is presented in Section \ref{sec:noAinfinity}.

\subsection{Outline}\label{sec:outline}
The organization of the paper is as follows. Section \ref{sec:prelim} is devoted to collect some notation and well known results as well as to summarize some previous and auxiliary results.
In Section \ref{sec:Ap} we develop the general quantitative theory of self-improving functionals $a(Q)$ discussed above, proving Theorem \ref{thm:Lp(w)-a(Q)-clean} and it consequences. In Section \ref{sec:KZ} we discuss the results related to Keith-Zhong's theorem. We show what kind of higher exponents can be reached using Theorem \ref{thm:Lp(w)-a(Q)-clean} in Section \ref{sec:PoincareSobolev}. The approach involving the good-$\lambda$ technique is contained in Section \ref{sec:goodL}, where we prove Theorem \ref{thm:ptimes-Aq} and obtain Corollary \ref{cor:MainCoro}. In Section \ref{sec:mixed} we prove the mixed weight Poincar\'e inequality from Theorem \ref{thm:PoincareSobolev-2weights} and derive from it the $A_1$ result providing the proof of Corollary \ref{thm:P(p,p*)-local-avg-A1}. We also discuss the best possible dependence on $[w]_{A_1}$. In Section \ref{sec:higher} we discuss some extensions and applications of our methods to two-weight Poincar\'e inequalities and to inequalities involving higher derivatives. In Section \ref{sec:Lorentz} we present applications to Poincar\'e inequalities on Lorentz spaces. Section \ref{sec:noAinfinity} contains the negative result from Theorem \ref{thm:RHinfty}. 

We also include an appendix in Section \ref{sec:appendix} for completeness. There we present the 
connections between Poincar\'e inequalities and fractional integrals and we also include the so called \emph{truncation method} which yields  that an appropriate weak type estimate implies the corresponding strong inequality.

\section{Preliminaries and some well known results}\label{sec:prelim}

Recall that we are interested in proving \emph{weighted} Poincar\'e inequalities of the form
\begin{equation}\label{eq:TemplatePoincare}
\left (\frac{1}{w(Q)}\int_Q|f-f_Q|^{q}w\right )^\frac{1}{q}\le C(w)\ell(Q)\left (\frac{1}{w(Q)}\int_Q |\nabla f|^p w\right )^\frac{1}{p},
\end{equation}
where  $1\le q,p\le \infty$ and $w$ is a weight function, i.e., a locally integrable non-negative function.  As usual, we will denote by $\avgint_E f \ dx=f_E=\frac{1}{|E|}\int_E f \ dx$ the average of $f$ over $E$ with respect to the Lebesgue measure. For a given measure $\mu$ defined for every cube $Q$, we will denote $f_{Q,\mu} = \avgint_Q f d\mu:= \frac{1}{\mu(Q)}\int_Q fd\mu$. In the particular case of densities given by a weight $w$, we will write $f_{Q,w}=\frac{1}{w(Q)}\int_Q fwdx$.

A brief remark on the oscillation on the left hand side is in order. One should try to prove the inequality for the oscillation with respect to the weighted average $f_{Q,w}$, but we can, essentially, prove inequalities like \eqref{eq:TemplatePoincare} with any constant $c$ instead of the average $f_Q$. This is a consequence of the fact that 
$$
\left (\frac{1}{w(Q)}\int_Q|f-f_{Q,w}|^{q}w\right )^\frac{1}{q}\le 2\inf_{c\in \mathbb{R}}\left (\frac{1}{w(Q)}\int_Q|f-c|^{q}w\right )^\frac{1}{q}.
$$

 We are particularly interested in the quantitative analysis on the constant $C(w)$ when the function $w$ belongs to a certain class of weights. As already mentioned, after the work \cite{FKS},  a natural scenario for this analysis is the class of Muckenhoupt $A_p$ weights defined as follows. 
For $1<p<\infty$ and $p'=p/(p-1)$, a weight $w$ is in $A_p$, $1<p<\infty$, if 
\begin{equation}\label{eq:Ap}
[w]_{A_p}:=\sup_Q \left(\avgint_Q w\,dx\right) \left(\avgint_Q w^{1-p'}\,dx\right)^{p-1} <\infty,
\end{equation}
where the supremum is taken over all cubes $Q\subset \mathbb{R}^n$ with sides parallel to the coordinate axes.
The limiting case of \eqref{eq:Ap} when $p=1$, defines the class $A_1$; that is, the set of weights $w$ such that
\begin{equation*}
[w]_{ A_1}:=\sup_{Q}\bigg(\avgint_Q w\,dx \bigg) \esssup_{Q} (w^{-1})<+\infty.
\end{equation*}
This is equivalent to $w$ having the property
\begin{equation*}
 Mw(x)\le [w]_{A_1}w(x)\qquad \text{ a.e. } x \in \mathbb{R}^n,
\end{equation*}
where $M$ denotes the maximal function:
\begin{equation*}\label{eq:M}
Mf(x) = \sup_{Q\ni x } \avgint_{Q} |f(y)|\ dy, 
\end{equation*}
and the supremum is taken over all cubes $Q\subset \mathbb{R}^n$ with sides parallel to the coordinate axes containing the point $x$. The \emph{centered} version with respect to euclidean balls of this operator is defined as
\begin{equation*}\label{eq:Mc}
M^cf(x) = \sup_{r>0} \avgint_{B(x,r)} |f(y)|\ dy.
\end{equation*}
Since we are considering the euclidean space $\mathbb{R}^n$ endowed with the Lebesgue measure, both maximal operators defined above are pointwise comparable, up to a dimensional constant.

The other limiting case of \eqref{eq:Ap} corresponds to the case $p=\infty$. Although it is very convenient to define this class, also introduced by Muckenhoupt,  by 
$$
A_\infty:=\bigcup_{p\ge 1} A_p,
$$
it is also  characterized by means of this constant 
\begin{equation*}
[w]_{A_\infty}:=\sup_Q\frac{1}{w(Q)}\int_Q M(w\chi_Q )\ dx,
\end{equation*}
(see \cite{HPR1} and \cite{HP}).

An important feature of $A_\infty$ weights is the so called reverse H\"older inequality (RHI) which can be seen as a selfimproving property of the local integrability of the weight $w$. More precisely, the following Theorem was proved in \cite{HP}  (see also \cite{HPR1}  for a simpler proof).

\begin{theorem}\label{thm:RHI}
Let $w\in A_\infty$. Define $r_w=1+\frac{1}{2^{n+1}[w]_{A_\infty}-1}$. Then for any cube $Q$, we have that
\begin{equation}\label{eq:RHI}
\avgint_Q w^{r_w} \le 2 \left (\avgint_Q w\right )^{r_w}.
\end{equation}
\end{theorem}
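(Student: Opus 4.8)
The plan is to run the classical Gehring-type self-improvement argument, but tracked quantitatively so that the exponent $r_w$ and the constant $2$ come out explicitly in terms of $[w]_{A_\infty}$. The starting point is the defining inequality for the $A_\infty$ constant, $\avgint_Q M(w\chi_Q)\,dx \le [w]_{A_\infty}\,\avgint_Q w\,dx$, which one should read as a gain in the Calderón–Zygmund decomposition of $w$ on $Q$. First I would fix a cube $Q$, normalize so that $\avgint_Q w = 1$, and for a height $\lambda \ge 1$ perform a local Calderón–Zygmund stopping-time decomposition of $w$ at level $\lambda$ relative to $Q$, producing maximal dyadic subcubes $\{Q_j\}$ with $\lambda < \avgint_{Q_j} w \le 2^n\lambda$ and $w \le \lambda$ a.e.\ off $\bigcup_j Q_j$. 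The key point is that the $A_\infty$ condition controls the measure of the bad set: $\sum_j |Q_j| \le \frac{1}{\lambda}\int_{\{M(w\chi_Q) > \lambda\}\cap Q} w\,dx$ combined with $\int_Q M(w\chi_Q) \le [w]_{A_\infty}|Q|$ gives, after the usual bookkeeping, an estimate of the form $w(\{w > \lambda\}\cap Q) \lesssim [w]_{A_\infty}\,|\{w>\lambda/2^n\}\cap Q|\cdot(\text{something})$, i.e.\ a ``good-$\lambda$'' relation between the super-level sets of $w$ in $w$-measure and in Lebesgue measure.

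Next I would convert this level-set inequality into the integral bound $\int_{Q} w^{r}\,dx \le C(r,[w]_{A_\infty})\,|Q|$ by multiplying through by $\lambda^{r-2}$ and integrating $d\lambda$ over $[1,\infty)$, using Fubini/layer-cake on both sides. This produces a term of the shape $\frac{1}{r-1}$ (or $\frac{1}{1 - (r-1)\cdot c[w]_{A_\infty}}$) on the right, which is finite and controlled precisely when $r - 1$ is small compared to $1/[w]_{A_\infty}$; choosing $r - 1 = \frac{1}{2^{n+1}[w]_{A_\infty}-1}$, i.e.\ $r = r_w$, is exactly the threshold that makes the geometric series converge and pins the constant at $2$. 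One has to be a little careful to absorb the contribution of the set $\{w \le 1\}$ (which is harmless since $w^{r_w} \le w$ there as $r_w > 1$, no—rather $w^{r_w}\le 1$ there, bounded by $|Q|$) and to handle the normalization cleanly.

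The main obstacle is the quantitative accounting: getting the numerology to line up so that the absorbed geometric series yields exactly the clean constant $2$ and the stated formula $r_w = 1 + \frac{1}{2^{n+1}[w]_{A_\infty}-1}$, rather than some unspecified dimensional mess. This requires being precise about the $2^n$ from the dyadic stopping time, the factor from passing between $M(w\chi_Q)$ super-level sets and the Calderón–Zygmund cubes, and the truncation at level $\lambda = 1$. Since this theorem is quoted from \cite{HP} (with a simpler proof in \cite{HPR1}), I would in practice simply cite those references for the sharp constants; but if a self-contained argument is wanted, the Gehring iteration above, with careful tracking of every constant through the layer-cake integration, is the route. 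Everything else—existence of the stopping cubes, Fubini, finiteness once $r<r_w$—is routine.
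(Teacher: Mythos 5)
The paper does not prove Theorem~\ref{thm:RHI}; it states it as a known result and refers to \cite{HP} and \cite{HPR1} for the argument, so there is no internal proof to compare against. Your closing remark---that in practice you would simply cite those references---is exactly what the paper does, and in that sense your ``proof'' matches the paper's.

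Your sketch of a self-contained argument is conceptually on the right track: the Calder\'on--Zygmund stopping time at level $\lambda$ with the dyadic bound $\lambda < \avgint_{Q_j} w \le 2^n\lambda$, the use of $\int_Q M(w\chi_Q)\,dx \le [w]_{A_\infty}\,w(Q)$ as the sole input from the weight, the layer-cake integration against $\lambda^{\epsilon-1}$ with $\epsilon = r_w-1$, and the absorption that forces $\epsilon$ to be of size $\sim 1/(2^{n+1}[w]_{A_\infty})$ are indeed the ingredients of the proof in \cite{HPR1}. As written, though, the sketch stops short of a proof at precisely the step where the theorem's content lives: the pivotal estimate is left in the form ``$\lesssim [w]_{A_\infty}\,|\{w>\lambda/2^n\}\cap Q|\cdot(\text{something})$'' and ``after the usual bookkeeping,'' and you concede the numerology has not been carried out. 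What is actually needed, after normalizing $\avgint_Q w =1$ and splitting the layer-cake integral at $\lambda = 1$, is a closed absorption inequality for $I := \int_1^\infty \lambda^\epsilon\,|\Omega_\lambda|\,d\lambda$ (with $\Omega_\lambda$ the union of the stopping cubes at height $\lambda$), obtained by feeding the local $A_\infty$ bound $\int_{Q_j}M(w\chi_{Q_j})\,dx \le [w]_{A_\infty}\,w(Q_j)$ on each stopping cube back into the distribution integral; it is exactly the interplay of the dyadic factor $2^n$ with $[w]_{A_\infty}$ there that produces the threshold $\epsilon\bigl(2^{n+1}[w]_{A_\infty}-1\bigr)\le 1$ and the clean constant $2$. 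Since the paper also defers this to the references, your proposal is consistent with what the paper does, but it should be read as an outline of the cited proof rather than a derivation of it.
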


We will need some additional well known properties of $A_p$ weights that we include here. Using H\"older's inequality with $p$ and its conjugate $p'$, we have that for every cube $Q$ and every $g\geq 0$,
\begin{equation}\label{eq:prop-Ap-function}
\frac{1}{|Q|}\int_Q g\ dx\leq[w]^{\frac{1}{p}}_{A_p} \left( \frac{1}{w(Q)}\int_Q g^pw\ dx\right )^{\frac{1}{p}}.
\end{equation}

Specializing inequality \eqref{eq:prop-Ap-function} for $g\equiv\chi_{E}$ we obtain that, for any measurable set $E\subset Q$, that
\begin{equation}\label{eq:prop-Ap-set}
\frac{|E|}{|Q|}\leq[w]^{\frac{1}{p}}_{A_p}\left (\frac{w(E)}{w(Q)}\right )^{\frac{1}{p}}.
\end{equation}

\begin{remark}
We remark here the interesting fact that the condition above characterizes the so called 
$A_{p,1}$ condition, which is strictly weaker than $A_p$. For example, $(M\mu)^{1-p}$ belongs to $A_{p,1}$ but not to $A_p$. Moreover, the $[w]_{ A_{p,1} }$ constant, defined as the best possible constant in the inequality 
\begin{equation*}
\left (\frac{|E|}{|Q|}  \right )^{p}\leq C \frac{w(E)}{w(Q)},
\end{equation*}
could be strictly smaller than $[w]_{ A_{p} }$. This smaller class seems to be the right class to study weighted Poincar\'e inequalities in the context of Lorentz spaces (see Section \ref{sec:Lorentz}). 
\end{remark}

We also will use few times the class of pair of $A_p$ weights. 
\begin{definition}
Given a pair of weights $(u,v)$, we will say that it belongs to the $A_p$ class and denote $(u,v)\in A_p$ if
\begin{equation}\label{eq:Ap-two-weights}
[u,v]_{A_p}:=\sup_Q \left(\avgint_Q u\,dx\right) \left(\avgint_Q v^{1-p'}\,dx\right)^{p-1} <\infty.
\end{equation}
\end{definition}

The following well known result will be used 
\begin{equation}\label{charactAp-two-weights}
\|M\|_{L^p(v)\to L^{p,\infty}(u)} \approx  [u,v]^{1/p}_{A_p}, 
\end{equation}
with a dimensional constant in front.

It is also well know that 
\begin{equation}\label{two weight Ap charact}
\frac{1}{|Q|}  \int_Q f\ dx \leq [u,v]_{A_p}^{ \frac{1}{p} }  
\left( \frac{1}{u(Q)}\int_Q f^p v\ dx\right )^{\frac{1}{p}}   \qquad f \geq 0.
\end{equation}

We finish this section by setting some notation related to certain function spaces and normalized local norms. We will consider first some basics about Lorentz spaces.  Let $\mu$ be a Radon measure on $\mathbb{R}^n$. A function $f$ belongs to the Lorentz space $L^{p,q}(\mu)$, $0 < p,q \le \infty$, if
\[ \norm{f}_{ L^{p,q}(\mu) } = \left[ p\, \int_{0}^{\infty}  \left(
t\, \mu\{  x \in \R^{n}: \absval{f(x)} > t \}^{1/p} \right)^{q}\,
\frac{dt}{t} \right]^{1/q} < \infty,
\]
whenever $q < \infty$, and
\[ \sup_{ 0
< t < \infty}{t\, \mu\{  x \in \R^{n}: \absval{f(x)} > t \}^{1/p}} < \infty,
\]
if $q = \infty$. Since we will be dealing with the measure $\mu=w dx$ given by a weight, we will denote
$$
\|f\|_{L^{p,\infty}_{w}} \leq \|f\|_{L^{p,p}_{w}}\leq c\,\|f\|_{L^{p,1}_{w}}.
$$
We also have Holder's inequality in this setting (see \cite{KS}):
\begin{equation}
\int_{\R^{n}}  \absval{ f(y)  g(y) }\, d\mu(y) \leq \, 4
\norm{f}_{ L^{p,q}(\mu) } \, \norm{f}_{L^{p',q'}(\mu)}.
\label{GHI}
\end{equation}

We will consider the case $q=1$  and $q=\infty$ and the following notation for local averaging will be used
$$
\Big\| g \Big\|_{L^{p, 1}(Q, \frac{wdx}{w(Q)} ) }= \int   \limits_0^{\infty}   \left( \frac{1}{w(Q)}  w\{x\in Q:  g(x)>t\}\right) ^{1/p} \,dt.
$$
and similarly for  $\Big\| g \Big\|_{L^{p, q}(Q, \frac{wdx}{|Q|} ) }$.

There is a class of weights attached to some of these spaces denoted by $A_{p,1}$  that was introduced by Chung-Hunt-Kurtz in \cite{CHK} and that will be used. We will denote by $A_{p,1}$, $p>1$, to the class of weights $w$ for which the quantity
\begin{equation}
[w]_{A_{p,1}}:= \sup_{Q} \left(  \avgint_Q  w(y)\ dy \right)  \Big\| \frac{1}{w}\Big\|_{L^{p',\infty}(Q, \frac{wdx}{|Q|} ) }^{p}.
\label{Ap,1}
\end{equation}
is finite. See also \cite[p. 145]{KK}.  Observe that 
$$
[w]_{A_{p,1}} \leq [w]_{A_p}. 
$$
and then $A_p \subset A_{p,1}$ with strict  inclusion since  if $M\mu$ is a.e. finite and $p>1$,  it is known that $(M\mu)^{1-p} \in A_{p,1}$ with
$$
[(M\mu)^{1-p}]_{A_{p,1}} \leq c_n,
$$
although in general
$$
[(M\mu)^{1-p}]_{A_{p}} =\infty.
$$
It is also known  that 
$ A_{p,1} \subset  \bigcap_{q>p} A_q$.

\section{The proof of Theorem \ref{thm:Lp(w)-a(Q)-clean} and its corollaries}\label{sec:Ap}

Now we are able to present the proof of the main result of this section, Theorem   \ref{thm:Lp(w)-a(Q)-clean}, providing a general self-improving result related to a generic functional $a$.


\begin{proof}[Proof of Theorem \ref{thm:Lp(w)-a(Q)-clean}] 
We recall that the goal is to prove the inequality
\begin{equation}\label{eq:FirstMainEstimate-Proof}
\left( \frac{1}{ w(Q)  } \int_{ Q }   |f -f_{Q}|^p     \,wdx\right)^{\frac{1}p}  \, \leq  c_n s \|a\|^s a(Q),
\end{equation}
provided that 
\begin{equation}
\frac{1}{|Q|}\int_{Q} |f-f_{Q}| \le a(Q),\label{eq:UnWeightedStartingPointL1}
\end{equation}
and that the functional   $a$ is such that for some $p\ge 1$ it satisfies condition $SD^s_p(w)$ from \eqref{eq:SDp} with $s>1$ and constant $\|a\|$.

We will assume that $f$ is bounded.  As  a first step to prove  \eqref{eq:FirstMainEstimate-Proof} we claim  the following a priori estimate 
\begin{equation} \label{eq:a priori estimate}
\sup_Q \left( \frac{1}{ w(Q)  } \int_{ Q }   \frac{|f -f_{Q}|^p}{a(Q)^p}   \,wdx\right)^{\frac{1}{p} }  <\infty. 
\end{equation}
To do this we consider first an approximation 
$$
X_{\varepsilon}=\sup_Q \left (\frac{1}{w(Q)}\int_{Q}\left |\frac{f-f_{Q}}{a_{\varepsilon}(Q)}\right |^p \, w dx \right )^{1/p}.
$$
where $a_{\varepsilon}(Q)=a(Q)+\varepsilon$.

We need the following lemma. Recall that the hypothesis on the functional $a$ is that it satisfies a smallness preservation condition. A difficulty is that  the the perturbed functional $a_\varepsilon$ has a worst smallness exponent.

\begin{lemma}\label{lem:SDps-a(Q)+eps}
Let $w$ be a $A_{\infty}$  and let $a \in SD^s_p(w)$ $s>1$. Suppose that  $\{Q_i\}\in S(L)$, then there are  constants $C$ and $S$ larger than $\|a\|$ and $s$ respectively  such that
\begin{equation*} 
\sum_{i}a_{\varepsilon}(Q_i)^pw(Q_i)\leq C^p \left (\frac{1}{L}\right )^{\frac{p}{S}}a_{\varepsilon}(Q)^pw(Q).
\end{equation*}
\end{lemma}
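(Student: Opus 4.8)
The plan is to show that the perturbed functional $a_\varepsilon(Q) = a(Q) + \varepsilon$ inherits a smallness-preservation condition, albeit with a worse exponent $S > s$ and a slightly larger constant $C > \|a\|$. The issue is purely that the additive constant $\varepsilon$ does not scale, so it cannot on its own satisfy any $SD$ condition (a constant functional is not small-preserving); however, since we only need the conclusion to hold \emph{up to} the worsened exponent, the genuinely decaying part coming from $a$ will absorb the defect.

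First I would fix a cube $Q$ and a family $\{Q_i\} \in S(L)$, so $\sum_i |Q_i| \le |Q|/L$. Using the elementary inequality $(x+y)^p \le 2^{p-1}(x^p + y^p)$, I would split
\begin{equation*}
\sum_i a_\varepsilon(Q_i)^p w(Q_i) \le 2^{p-1}\sum_i a(Q_i)^p w(Q_i) + 2^{p-1}\varepsilon^p \sum_i w(Q_i).
\end{equation*}
For the first sum, the hypothesis $a \in SD^s_p(w)$ gives the bound $2^{p-1}\|a\|^p L^{-p/s} a(Q)^p w(Q) \le 2^{p-1}\|a\|^p L^{-p/s} a_\varepsilon(Q)^p w(Q)$. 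For the second sum, the key point is to control $\sum_i w(Q_i)$ by a \emph{small} multiple of $w(Q)$: since $\{Q_i\}$ are disjoint subcubes of $Q$ with total Lebesgue measure at most $|Q|/L$, the reverse-Hölder inequality for the $A_\infty$ weight $w$ (Theorem \ref{thm:RHI}) yields
\begin{equation*}
\sum_i w(Q_i) = w\Big(\bigcup_i Q_i\Big) \le C_w \left(\frac{|\bigcup_i Q_i|}{|Q|}\right)^{\delta} w(Q) \le C_w \left(\frac{1}{L}\right)^{\delta} w(Q),
\end{equation*}
for some exponent $\delta = \delta([w]_{A_\infty}) \in (0,1)$ coming from the RHI (this is the standard $A_\infty \Rightarrow$ ``$A_\infty$ reverse'' estimate $w(E)/w(Q) \le C(|E|/|Q|)^\delta$). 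Since $\varepsilon \le a_\varepsilon(Q)$, the second term is bounded by $2^{p-1} C_w L^{-\delta} a_\varepsilon(Q)^p w(Q)$.

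Combining the two pieces, I get $\sum_i a_\varepsilon(Q_i)^p w(Q_i) \le \big(2^{p-1}\|a\|^p L^{-p/s} + 2^{p-1} C_w L^{-\delta}\big) a_\varepsilon(Q)^p w(Q)$. Now I choose $S > 1$ large enough that $p/S \le \min\{p/s, \delta\}$ — concretely $S = \max\{s, p/\delta\}$ — so that both $L^{-p/s} \le L^{-p/S}$ and $L^{-\delta} \le L^{-p/S}$ (recall $L > 1$). Then the right-hand side is at most $C^p L^{-p/S} a_\varepsilon(Q)^p w(Q)$ with $C^p = 2^{p-1}(\|a\|^p + C_w)$, which is of the required form with $C > \|a\|$. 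The main obstacle, and the only place the $A_\infty$ hypothesis is genuinely used, is precisely the estimate $\sum_i w(Q_i) \le C_w L^{-\delta} w(Q)$: one needs the quantitative reverse-Hölder / $A_\infty$ self-improvement to convert ``small in Lebesgue measure'' into ``small in $w$-measure,'' and this forces both the dependence of $S$ on $[w]_{A_\infty}$ and the enlargement of the constant. Everything else is bookkeeping with the convexity inequality and the monotonicity $\varepsilon \le a_\varepsilon(Q)$, $a(Q) \le a_\varepsilon(Q)$.
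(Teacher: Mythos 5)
Your proof is correct and follows essentially the same route as the paper: split $a_\varepsilon = a + \varepsilon$, estimate the $a$-part with the $SD^s_p(w)$ hypothesis, and estimate the $\varepsilon$-part by converting Lebesgue-smallness of $\bigcup_i Q_i$ into $w$-smallness via the $A_\infty$ reverse Hölder inequality (Theorem \ref{thm:RHI}), giving $\delta = 1/r_w'$ and hence $S = \max\{s, pr_w'\}$. The only cosmetic difference is that you split with the convexity bound $(x+y)^p \le 2^{p-1}(x^p+y^p)$, whereas the paper uses Minkowski's inequality in $\ell^p$ and only combines the two terms at the end; both lead to the same $C$ and $S$ up to harmless constants.
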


\begin{proof}
We compute the sum from the $SD^s_p(w)$ condition:
\begin{eqnarray*}
 \left( \sum_{i}a_{\varepsilon}(Q_i)^pw(Q_i)  \right)^{\frac1p } & = & 
\left( \sum_{i} (a(Q_i) +\varepsilon )^pw(Q_i) \right)^{\frac1p }\\
&\le & \left( \sum_{i} a(Q_i)^pw(Q_i) \right)^{\frac1p } + \left( \sum_{i} \varepsilon^pw(Q_i) \right)^{\frac1p }\\
&\le & \frac{\|a\|}{L^{1/s}}  a(Q)w(Q)^{\frac1p } + \varepsilon w\left( E_Q \right)^{\frac1p }
\end{eqnarray*}
where $ E_Q =:\bigcup_{i}Q_i \subset Q$. Recall that by the smallness condition we have that
$$
|E_Q|\le \frac{|Q|}{L}.
$$
By Holder's inequality and the RHI for $w$ with the exponent $r$ from Theorem \ref{thm:RHI} (since $w\in A_{\infty}$), 
\begin{equation*}
w(E_Q)\leq 2\,\left(\frac{|E_Q|}{|Q|} \right)^{\frac{1}{r'_w}} \, w(Q).
\end{equation*}
Now, since $r' \approx [w]_{A_{\infty}}$ we obtain 
$$
w(E_Q)\leq 2\,\left(\frac1L \right)^{\frac{1}{r'_w}} \, w(Q).
$$
Putting things together

\begin{eqnarray*}
 \left( \sum_{i}a_{\varepsilon}(Q_i)^pw(Q_i)  \right)^{\frac1p } 
&  \leq & \frac{\|a\|}{L^{1/s}}  a(Q)w(Q)^{\frac1p } + \varepsilon  \frac{2^{\frac1p }}{L^{1/pr'}} \, w(Q)^{\frac1p }\\
&  \leq & \max\{ \frac{\|a\|}{L^{1/s}}, \frac{2^{\frac1p }}{L^{1/pr'}}  \} \,a_{\varepsilon}(Q)w(Q)^{\frac1p } \\
& \leq &    \frac{\max\{ \|a\|, 2^{\frac1p } \}}{   L^{1/\max\{s,pr'\}}      } \,a_{\varepsilon}(Q)w(Q)^{\frac1p }.
 \end{eqnarray*}
The proof of the lemma is now complete.
\end{proof}

We consider the local Calder\'on-Zygmund decomposition  of $\frac{|f-f_{Q}|}{a_\varepsilon(Q)}$ relative to $Q$ at level $L$ on $Q$ for a large universal constant $L>1$ to be chosen. Let $\mathcal{D}(Q)$ be the family of dyadic subcubes of $Q$. The Calder\'on-Zygmund (C-Z) decomposition yields a collection $\{Q_{j}\}$ of cubes such that $Q_j\in \mathcal{D}(Q)$, maximal with respect to inclusion, satisfying

\begin{equation}\label{eq:CZ1}
L  < \frac{1}{|Q_{j}|  }\int_{Q_{j}} \frac{|f-f_{Q}|}{{a_\varepsilon(Q)}} \, dy. 
\end{equation}
Then, if $P$ is dyadic with $P \supset Q_j$ 
\begin{equation}\label{eq:CZ2}
\frac{ 1 }{|P|  }
\int_{P} \frac{|f-f_{Q}|}{{a_\varepsilon(Q)}} \, dy   \leq L
\end{equation}
and hence 
\begin{equation}\label{eq:CZ3}
L  < \frac{ 1 }{|Q_{j}|  }
\int_{Q_{j}} \frac{ |f-f_{Q}| }{{a_\varepsilon(Q)}} \, dy \leq L\,2^{n}
\end{equation}
for each integer $j$.  Also note that 
$$  \left \{x\in Q: M_Q^d\left ( \frac{|f-f_{Q}|}{{a_\varepsilon(Q)}}\chi_{  Q }\right )(x) > L   \right \} = \bigcup_{j}Q_j=:\Omega_L
$$
where $M^d_Q$ stands for the dyadic maximal function adapted to the cube $Q$. That is, 
$$
M^d_Q(f)(x):=\sup_{P\ni x}\avgint_P |f(y)|\ dy\qquad x\in Q, P\in \mathcal{D}(Q).
$$
Then, by the Lebesgue differentiation theorem it follows that
$$\frac{|f(x)-f_{Q}|}{{a_\varepsilon(Q)}} \leq L   \qquad              a.e. \ x \notin \Omega_L
$$

Also, observe that by \eqref{eq:CZ1} (or the weak type $(1,1)$ property of $M$) and recalling our starting assumption \eqref{eq:UnWeightedStartingPointL1}, we have that $\{Q_i\}\in S(L)$, namely
\begin{equation*}\label{eq:CZ4a}
|\Omega_L|=|\bigcup_{j}Q_j | < \frac{|Q|}{L}.
\end{equation*}

Now, given the C-Z decomposition of the cube $Q$, we perform the classical C-Z of the function $\dfrac{f-f_Q}{{a_\varepsilon(Q)}}$ as
\begin{equation}\label{eq:CZ-f-fQ}
\frac{f-f_Q}{a_\varepsilon(Q)}=g_Q+b_Q,
\end{equation}
where the functions $g_Q$ and $b_Q$ are defined as usual. We have that

\begin{equation}\label{eq:gQ}
g_Q(x) = \left \{
\begin{array}{ccc}
\dfrac{f-f_Q}{{a_\varepsilon(Q)}} & , &  x \notin \Omega_L \\
&&\\
\displaystyle \avgint_{Q_i}\dfrac{f-f_Q}{{a_\varepsilon(Q)}} & , &  x\in \Omega_L,  x\in Q_i
\end{array}
\right .
\end{equation}
Note that this definition makes sense since the cubes $\{Q_i\}$ are disjoint, so any $x\in \Omega_L$ belongs to only one $Q_i$. Also note that condition \eqref{eq:CZ3} implies that 
\begin{equation}\label{eq:gQ-bounded}
|g_Q(x)|\le 2^nL 
\end{equation}
for almost all $x\in Q$. The function $b_Q$ is determined by this choice of $g_Q$ as the difference 
\begin{equation*}
b_Q= \dfrac{f-f_Q}{{a_\varepsilon(Q)}} -g_Q,
\end{equation*}
but we  also have a representation as
\begin{equation}\label{eq:bQ}
b_Q(x)=\sum_i \left (f(x)-f_{Q_i}\right )\dfrac{1}{{a_\varepsilon(Q)}}\chi_{Q_i}(x)=\sum_i b_{Q_i},
\end{equation}
where $b_{Q_i}=(f(x)-f_{Q_i})\dfrac{1}{{a_\varepsilon(Q)}}\chi_{Q_i}(x)$.

Now we start with the estimation of the desired $L^p$ norm from \eqref{eq:a priori estimate}. Consider on $Q$ the measure $\mu$ defined by $d\mu=\frac{w\chi_Q}{w(Q)}$. Then, by the triangle inequality, we have

\begin{eqnarray*}
\left( \frac{1}{ w(Q)  } \int_{ Q }   \frac{|f -f_{Q}|^p}{a_{\varepsilon}(Q)^p}   \,wdx\right)^{\frac{1}{p} } & \le & \|g_Q\|_{L^p(\mu)}+\|b_Q\|_{L^p(\mu)}\\
& \le & 2^nL + \left (\dfrac{1}{w(Q)}\int_{\Omega_L}\sum_j |b_{Q_j}|^p \, w dx \right )^{1/p}\\
\end{eqnarray*}

Let us observe that the last integral of the sum, by the localization properties of the functions $b_{Q_i}$, can be controlled:
\begin{eqnarray*}
\int_{\Omega_L} |\sum_j b_{Q_j} |^p \, w dx  & \le & \sum_i\int_{Q_i}\left | b_{Q_j}\right |^p \, w dx\\
& = & \dfrac{1}{a_{\varepsilon}(Q)^p}\sum_i\frac{a_{\varepsilon}(Q_i)^pw(Q_i)}{w(Q_i)}\int_{Q_i}\left |\frac{f-f_{Q_i}}{a_{\varepsilon}(Q_i)}\right |^p \, w dx \\
& \le & \dfrac{X_{\varepsilon}^p }{a_{\varepsilon}(Q)^p}\sum_ia_{\varepsilon}(Q_i)^pw(Q_i),
\end{eqnarray*}
where $X_{\varepsilon}$ is the quantity defined by
$$
X_{\varepsilon}=\sup_Q \left (\frac{1}{w(Q)}\int_{Q}\left |\frac{f-f_{Q}}{a_{\varepsilon}(Q)}\right |^p \, w dx \right )^{1/p}.
$$
which is finite since $f$ is bounded and $a_{\varepsilon}(Q) >\varepsilon$. Then we obtain that
\begin{equation}\label{eq:UsingSmallness1}
\left( \frac{1}{ w(Q)  } \int_{ Q }  \frac{|f -f_{Q}|^p}{a_{\varepsilon}(Q)^p}  \,wdx\right)^{\frac{1}{p} }  \le  2^nL + X_{\varepsilon}\left ( \dfrac{\sum_ia_{\varepsilon}(Q_i)^pw(Q_i) }{a_{\varepsilon}(Q)^pw(Q)}\right )^{1/p}.
\end{equation}
Therefore, using the result from Lemma \ref{lem:SDps-a(Q)+eps} for the modified functional $a(Q)+\varepsilon$, it follows that
\begin{equation}\label{eq:UsingSmallness2}
\left( \frac{1}{ w(Q)  } \int_{ Q }  \frac{|f -f_{Q}|^p}{a_{\varepsilon}(Q)^p}  \,wdx\right)^{\frac{1}{p} } \le 2^n L +  X_{\varepsilon} \frac{C_{\|a\|}}{L^{1/\max\{s,r'\}}}.
\end{equation}
This holds for every cube $Q$, so taking the supremum we obtain
$$
X_{\varepsilon}\leq 2^n L +    X_{\varepsilon} \frac{C_{\|a\|}}{L^{1/\max\{s,r'\}}}.
$$
Choosing $L$ large enough independent of $\varepsilon$, it follows that
$$
X_{\varepsilon}\leq  c_{s,r',\|a\|}.
$$
 for any $\varepsilon$. By the monotone convergence theorem, we also conclude that \eqref{eq:a priori estimate} holds, namely

\begin{equation}\label{eq:X-finite}
X= \sup_Q \left( \frac{1}{ w(Q)  } \int_{ Q }   \frac{|f -f_{Q}|^p}{a(Q)^p}   \,wdx\right)^{\frac{1}{p} }<\infty
\end{equation}
Once we have proved that $X$ is finite, we can proceed to  the precise quantitative estimate \eqref{eq:FirstMainEstimate-Proof}. The steps are exactly the same as before, but using directly $X$ instead of the approximation $X_\varepsilon$ and therefore obtaining the exact same inequality \eqref{eq:UsingSmallness1} but with no $\varepsilon$. Since we are now dealing with the original functional $a$, we use the smallness preservation condition to obtain a better version of \eqref{eq:UsingSmallness2}, namely
$$
X\le 2^n L +  X \frac{\|a\|}{L^{1/s}}.
$$
Now we choose $L=2e\max\{\|a\|^s,1\}$ so the above inequality becomes
$$
X\le 2^n 2e\|a\|^s \left ((2e)^{1/s}\right )'\le e2^{n+1}s\|a\|^s,
$$
using the elementary fact that $\left ((2e)^{1/s}\right )'\le s$. This is the desired inequality \eqref{eq:FirstMainEstimate-Proof}:
$$
\left( \frac{1}{ w(Q)  } \int_{ Q } |f -f_{Q}|^p \,wdx\right)^{\frac{1}{p} }\le s c_n \|a\|^s a(Q)
$$
for any cube $Q$.

\end{proof}

We may now proceed to the proof of Corollary and \ref{cor:Poincare(p,p)-twoweight}. Once Theorem \ref{thm:Lp(w)-a(Q)-clean} is proved, the key step is to verify that the corresponding functionals satisfy the smallness preservation condition.

\begin{lemma}\label{lem:L-small}
Let $w$ be a weight, $L>1, 0< p<\infty$ and let $a(Q)$ defined as in \eqref{eq:general-a(Q)}. Then $a\in SD_p^{n/\alpha}(w)$.
\end{lemma}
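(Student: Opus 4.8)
The plan is to verify Definition \ref{def:SDp} directly for the functional $a(Q)=\ell(Q)^{\alpha}\left(\frac{1}{w(Q)}\mu(Q)\right)^{1/p}$, taking $s=n/\alpha$. Fix a cube $Q$ and a family $\{Q_i\}$ of pairwise disjoint subcubes of $Q$ with $\{Q_i\}\in S(L)$, i.e.\ $\sum_i|Q_i|\le |Q|/L$. We compute
\[
\sum_i a(Q_i)^p w(Q_i)=\sum_i \ell(Q_i)^{\alpha p}\,\frac{\mu(Q_i)}{w(Q_i)}\,w(Q_i)=\sum_i \ell(Q_i)^{\alpha p}\,\mu(Q_i),
\]
so the weight $w$ miraculously disappears from the sum and the quantity to bound is purely geometric-measure-theoretic: $\sum_i \ell(Q_i)^{\alpha p}\mu(Q_i)$ against $\ell(Q)^{\alpha p}\mu(Q)$.

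The key step is then to control $\ell(Q_i)^{\alpha p}$ using the smallness of the family. Since $\ell(Q_i)=|Q_i|^{1/n}$, write $\ell(Q_i)^{\alpha p}=\ell(Q)^{\alpha p}\left(\frac{|Q_i|}{|Q|}\right)^{\alpha p/n}$. Because the $Q_i$ are pairwise disjoint subsets of $Q$, each ratio $|Q_i|/|Q|\le 1$; I would like to extract one full factor of $\left(\frac{\sum_i|Q_i|}{|Q|}\right)^{\alpha p/n}\le L^{-\alpha p/n}=L^{-p/s}$ and be left with something bounded by $\mu(Q)$. Concretely: if $\alpha p/n\le 1$, then by subadditivity of $t\mapsto t^{\alpha p/n}$ on $[0,\infty)$,
\[
\sum_i \left(\frac{|Q_i|}{|Q|}\right)^{\alpha p/n}\mu(Q_i)\le \left(\max_i\frac{|Q_i|}{|Q|}\right)^{0}\cdot\Big(\text{...}\Big),
\]
but more cleanly one just notes $\left(\frac{|Q_i|}{|Q|}\right)^{\alpha p/n}\le\left(\frac{\sum_j|Q_j|}{|Q|}\right)^{\alpha p/n}\le L^{-p/s}$ for each single $i$ since $|Q_i|\le\sum_j|Q_j|$, and then
\[
\sum_i \ell(Q_i)^{\alpha p}\mu(Q_i)\le \ell(Q)^{\alpha p}L^{-p/s}\sum_i\mu(Q_i)\le \ell(Q)^{\alpha p}L^{-p/s}\mu(Q),
\]
using $\sum_i\mu(Q_i)\le\mu(Q)$ by disjointness. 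Dividing through by $w(Q)$ gives exactly $\sum_i a(Q_i)^p w(Q_i)\le L^{-p/s}a(Q)^p w(Q)$, i.e.\ $a\in SD_p^{n/\alpha}(w)$ with $\|a\|\le 1$.

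The only subtlety, and the main thing to be careful about, is the case $\alpha p/n>1$: then the bound $\left(\frac{|Q_i|}{|Q|}\right)^{\alpha p/n}\le L^{-p/s}$ still holds for each $i$ individually (same one-term argument, since $|Q_i|\le\sum_j|Q_j|\le|Q|/L$), so in fact the estimate above goes through verbatim with no restriction on $\alpha,p$ — one never needs subadditivity of the power function at all, only the trivial observation that a single term of a sum is at most the whole sum. Thus the argument is uniform in the parameters, and this is why the exponent $n/\alpha$ is precisely the one that appears. I would present it in this streamlined form, remarking that $w$ plays no role whatsoever in the computation (consistent with the paper's belief that the $A_\infty$ hypothesis elsewhere is not essential).
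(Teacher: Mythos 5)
Your proof is correct, and it is a genuine (if modest) streamlining of the paper's argument. The paper splits into two cases: if $\alpha p/n<1$ it applies H\"older's inequality with exponent $n/(\alpha p)$ and then uses superadditivity $\sum_i \mu(Q_i)^{q}\le(\sum_i\mu(Q_i))^{q}$ for $q\ge 1$ to collapse the dual factor; if $\alpha p/n\ge 1$ it invokes $\mu(Q_i)\le\mu(Q)$ together with convexity of $t\mapsto t^{\alpha p/n}$. Your observation that the single-term bound
\[
\left(\frac{|Q_i|}{|Q|}\right)^{\alpha p/n}\le\left(\frac{\sum_j|Q_j|}{|Q|}\right)^{\alpha p/n}\le L^{-\alpha p/n}
\]
holds for \emph{every} $i$ and \emph{every} positive exponent replaces both H\"older and the case distinction by the trivial monotonicity of $t\mapsto t^{\gamma}$, $\gamma>0$, followed by $\sum_i\mu(Q_i)\le\mu(Q)$. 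Both routes yield $\|a\|\le 1$ and the exponent $s=n/\alpha$, so the result is identical; yours simply trades the $\ell^{n/(\alpha p)}$--$\ell^{(n/(\alpha p))'}$ H\"older pairing for its degenerate $\ell^{\infty}$--$\ell^{1}$ form, which is uniform in the parameters. One cosmetic note: the first half of your writeup (the aborted subadditivity attempt) can be deleted, since the final one-term argument subsumes it entirely.
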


\begin{proof}
Let $\{Q_i\}\in S(L)$. If $\frac{p\alpha}{n}<1$ use H\"older's inequality and convexity, 
\begin{eqnarray*}
\sum_{i}a(Q_i)^pw(Q_i) & \le & \sum_{i}\ell(Q_i)^{p\alpha}\mu(Q_i)\\
& = & \sum_i |Q_i|^{\frac{p\alpha}{n}}\mu(Q_i)\\
&\le & \left (\sum_i |Q_i|\right )^{\frac{p\alpha}{n}}\left (\sum_i \mu(Q_i)^{(\frac{n}{p\alpha})'}\right )^\frac{1}{(\frac{n}{p\alpha})'}\\
&\le & \left (\frac{|Q|}{L}\right )^{\frac{p\alpha}{n}}\sum_i \mu(Q_i)\\
& = & \left (\frac{1}{L}\right )^{\frac{p\alpha}{n}}\ell(Q)^{p\alpha}\mu(Q)=\left (\frac{1}{L}\right )^{\frac{p\alpha}{n}}a(Q)^pw(Q).
\end{eqnarray*}
If $\frac{p\alpha}{n}\geq 1$ use that $\mu(Q_i)\leq \mu(Q)$ and convexity. 
\end{proof}

Another interesting example is related to the classical unweighted Sobolev exponent $p^*$  given by 
$$
\frac{1}{n}=\frac{1}{p} -\frac{1}{p^*}\qquad    1\leq p<n.
$$
As before, consider the functional from \eqref{eq:general-a(Q)} but in the unweighted version and for $\alpha=1$:
\begin{equation*} 
a(Q)=\ell(Q)\left(\frac{1}{|Q|} \mu(Q) \right)^{1/p}. 
\end{equation*}
Then if $p\leq q<p^*$ we have that $a$ satisfies the $SD_s^{q}$ condition with $s>1$ given by 
$$
\frac{1}{s}=\frac{1}{q} -\frac{1}{p^*}.
$$
Indeed, this follows from H\"older's inequality applied 
to the exponent $r=\frac{p^*}{p^*-q}>1$ 
\begin{eqnarray*}
\sum_{i}a(Q_i)^q |Q_i| & = & 
\sum_{i}   |Q_i|^{1-\frac{q}{p^*}}       \mu(Q_i)^{ \frac{q}{p} }\\
&\le & \left (\sum_i |Q_i|\right )^{1-\frac{q}{p^*}} \left (\sum_i \mu(Q_i)^{  \frac{p^*}{p} }\right )^\frac{q}{p^*}\\
&\le & \left (  \frac{|Q|}{L}  \right )^{\frac{q}{s}} \left (\sum_i \mu(Q_i) \right )^\frac{q}{p}\\
&\le & \left (\frac{|Q|}{L}\right )^{\frac{q}{s}} \mu(Q)^\frac{q}{p} =\left (\frac{1}{L}\right )^{\frac{q}{s}}a(Q)^q \,|Q|.
\end{eqnarray*}

The observation above can be generalized as follows.  Given a functional $a$ we define the largest exponent for which a satisfies $D^{p^*}$ with bound $\|a\|\le 1$, namely 
$$
\sum_{i}a(Q_i)^{p^*} |Q_i| \leq  a(Q)^{p^*}|Q|.
$$
Then if $q<p^*$, $a$ satisfies the $SD_s^{q}$ condition with $s>1$ given by 
$$
\frac{1}{s}=\frac{1}{q} -\frac{1}{p^*}.
$$
Indeed, if $t=\frac{p^*}{q}$
\begin{eqnarray*}
\sum_{i}a(Q_i)^q |Q_i| & = & 
\sum_{i}a(Q_i)^q  \,|Q_i|^{\frac{1}{t}} \,|Q_i|^{\frac{1}{t'}}   \\
&\le & \left ( \sum_i a(Q_i)^{p^*}  |Q_i|  \right )^{\frac{1}{t}} \left (\sum_i |Q_i| \right )^\frac{1}{t'}\\
&\le & \left (  a(Q)^{p^*}|Q|  \right )^{\frac{1}{t}} \left ( \frac{|Q|}{L} \right )^\frac{1}{t'} \\
&\leq &   a(Q)^{q}|Q|   \left ( \frac{1}{L} \right )^\frac{q}{s}. \\
\end{eqnarray*}

With the previous estimates on the functional $a$ in hand, we can now present the proof of the corollaries.

The proof of the two weight analogue follows similar steps.

\begin{proof}[Proof of Corollary \ref{cor:Poincare(p,p)-twoweight}]
The proof follow from the well known $(1,1)$ Poincar\'e inequality 
$$
\frac{1}{|Q|}\int_Q |f-f_Q|\leq c\, \ell(Q)\frac{1}{|Q|}\int_Q |\nabla f(x)| dx.
$$
We refer to Theorem \ref{thm:equiv-weak-strong-1n'-pointiwise} in the Appendix (Section \ref{sec:appendix}). If we combine that inequality with the (two weight) $A_p$ condition property \eqref{two weight Ap charact}, we have that
$$
\frac{1}{|Q|}\int_Q |f-f_Q|\leq c\,[u,v]^{\frac{1}{p}}_{A_p}\ell(Q)\left (\frac{1}{u(Q)}\int_Q|\nabla f|^{p} \, v \ dx\right )^{1/p}.
$$
Now we just need to consider the functional 
$$
a(Q)=[u,v]^{\frac{1}{p}}_{A_p}\ell(Q)\left (\frac{1}{u(Q)}\int_Q|\nabla f|^{p} \, v \ dx\right )^{1/p}.
$$
and apply Theorem \ref{thm:Lp(w)-a(Q)-clean}. 
\end{proof}

\section{The Keith-Zhong phenomenon }\label{sec:KZ}

Here we present a result in the spirit of the work of Keith and Zhong on the open ended property of Poincar\'e inequalities.

\begin{proof} [Proof of Corollary \ref{cor:K-Z phenomenon}. ]

We have to prove that if $1\leq p<p_0$ and $w\in A_p$, then 
\begin{equation*}
\left (\frac{1}{w(Q)}  \int_Q|f-f_{Q}|^{p} \,w  \right )^\frac{1}{p}\leq
c\,\varphi( c_{p,p_0,n} [w]_{A_{p}}^{\frac{p_{0}-1}{p-1}} )\,  \ell(Q) \left(\frac{1}{w(Q)}   \int_Q g^{p}\,w \right)^{\frac{1}{p}}.
\end{equation*}
Let us define the functional
$$
a(Q) = \varphi([w]_{A_{p_0}})
 \ell(Q) \left(\frac{1}{w(Q)} \int_Q g^{p_0}\,wdx \right)^{1/p_0}.
$$
Then $a\in SD_{p_0}^{n}(w)$ by Lemma \ref{lem:L-small}. Hence, by Theorem \ref{thm:Lp(w)-a(Q)-clean} we have that 
\begin{equation*}
\left( \frac{1}{ w(Q)  } \int_{ Q }   |f -f_{Q}|^{p_0}   \,wdx\right)^{\frac{1}{p_0} }  \, \leq C_n\, a(Q).
\end{equation*}
We rewrite the last inequality as 
\begin{equation}\label{eq:PIp-p}
\int_{ Q }   |f -f_{Q}|^{p_0}     wdx   \leq C_n^{p_0}\,   \varphi([w]_{A_{p_0}})^{p_0}  \ell(Q)^{p_0} \int_Q g^{p_0}\,wdx.
\end{equation}

As already mentioned we use ideas from the theory of extrapolation with weights as can be found in \cite{CMP-Book}. Let $p \in (1,p_0)$  and let  $w\in A_p$. We will use the so called Rubio de Francia's algorithm. For any $h\in L^p$, we define 
\begin{equation*}
R(h)= \sum_{k=0}^\infty \frac1{2^k}\frac{M^k
(h)}{\|M\|_{L^{p}(w)}^k}.
\end{equation*} 

The operator $R$ satisfies the following three conditions.

(A) \quad $h\le R(h)$

\vspace{.2cm}

(B) \quad $\|R(h)\|_{L^{p}(w)}\le
2\,\|h\|_{L^{p}(w)}$

\vspace{.2cm}

(C) \quad  $[R(h)]_{A_{1}}\leq 2\,   \|M\|_{L^{p}(w) }$

Then we have, for some $\alpha>0$ to be chosen later, that
\begin{eqnarray*}
\left (  \int_Q|f-f_{Q}|^{p} \,wdx  \right )^\frac{1}{p} & = & \left (  \int_Q|f-f_{Q}|^{p}R(\chi_Qg)^{-\alpha p}R(\chi_Qg)^{\alpha p}   \,wdx \right )^\frac{1}{p}\\
&\le & I.II,
\end{eqnarray*}
by using H\"older's inequality with the pair $q=\frac{p_0}{p}>1$ and $q'=(\frac{p_0}{p})'=\frac{p_0}{p_0-p}$, where 
\begin{equation*}
I=\left (   \int_Q|f-f_{Q}|^{p_0}R(\chi_Qg)^{-\alpha p_0}  \,wdx  \right )^{1/p_0}
\end{equation*}
and 
\begin{equation*}
II=\left (   \int_Q R(\chi_Qg)^{\alpha p\left( \frac{p_0}{p}\right )'}   \,wdx  \right )^{\frac{1}{p\left( \frac{p_0}{p}\right )'}}.
\end{equation*}
We control the first term  $I$ by choosing $\alpha= \frac{p_0-p}{p_0}>0$ and defining 
$$v:=R(\chi_Qg)^{-\alpha p_0}=R(\chi_Qg)^{-(p_0-p)}.$$
Now we claim that  $v$ belongs to $A_{p_0}$ since 
$$
 [v]_{A_{p_0}}=[R(\chi_Qg)^{-(p_0-p)}\,w]_{A_{p_{0}}}
   \leq C\, \|M\|_{L^{p}(w) }^{p_{0}-p}\, [w]_{A_p}  \leq C [w]_{A_p}^ {\frac{p_0-p }{p-1}  }[w]_{A_p}.
$$
Indeed for any cube and by the definition of $A_{1}$, we have that 
$$
\avgint_Q  R(\chi_Qg)^{-(p_0-p)}\,wdx \le\, [R\chi_Qg]_{A_{1}}^{p_{0}-p}   \Big( \avgint_Q  R(\chi_Qg)\,dx  \Big)^{-(p_{0}-p)}\, 
\avgint_Q  \,wdx.
$$
by setting $q=\frac{p_0-1}{p_{0}-p}>1$ and observing that $q'=\frac{p_0-1}{p-1}$. Then, Holder's inequality yields
\begin{eqnarray*}
\avgint_Q   \Big(R(\chi_Qg)^{-(p_0-p)}\,w \Big)^{1-p'_{0}} \,dx 
& = &\avgint_Q  R(\chi_Qg)^{ \frac{p_{0}-p}{p_{0}-1}   }\,w^{1-p'_{0}} \,dx\\
&\le &
\Big( \avgint_Q  R(\chi_Qg)\,dx                        \Big)^{\frac{p_{0}-p}{p_{0}-1}}
\Big(\avgint_Q   w^{1-p'} \,dx \Big)^{\frac{p-1}{p_{0}-1}}.
\end{eqnarray*}
Hence, combining all previous estimates,  we have 
\begin{eqnarray*}
 [R(\chi_Qg)^{-(p_0-p)}\,w]_{A_{p_{0}}}  & \leq &  [R(\chi_Qg)]_{A_{1}}^{p_{0}-p}\, [w]_{A_p}\\
& \le & 2^{p_{0}-p}\, \|M\|_{L^p(w)}^{p_{0}-p}\, [w]_{A_{p}} \\
&  \leq  &c_{p,p_0,n} [w]_{A_{p}}^{\frac{p_{0}-p}{p-1}}\, [w]_{A_{p}}\\
&=& c_{p,p_0,n}\,  [w]_{A_{p}}^{\frac{p_{0}-1}{p-1}}.
\end{eqnarray*}%
by (C) above.

We can apply now \eqref{eq:PIp-p},
\begin{eqnarray*}
I & \le &  c\,\varphi( c_{p,p_0,n}\,  [w]_{A_{p}}^{\frac{p_{0}-1}{p-1}} ) \,  \ell(Q)\left (    \int_Qg^{p_0}\,R(\chi_Q g )^{-(p_0-p)} \,wdx  \right )^{1/p_0}\\
& \leq & c\,\varphi( c_{p,p_0,n}\,  [w]_{A_{p}}^{\frac{p_{0}-1}{p-1}}) \, \ell(Q)\left (    \int_Qg^{p} \,wdx\right )^{1/p_0},
\end{eqnarray*}
by property (A) above.

For the second factor $II$, note that by the choice of $\alpha$, we have that
$\alpha \left (\frac{p_0}{p}\right )'=\frac{p_0-p}{p_0} \frac{p_0}{p_0-p}=1$ and 
\begin{equation*}
II = 
\left (  \int_Q R(\chi_Q g )^{p} \,wdx  \right )^{\frac{1}{p\left( \frac{p_0}{p}\right )'}}
\le 2\,\left ( \int_Q g^{p} \,wdx  \right )^{\frac{p_0-p}{p_0p}}
\end{equation*}
by (B) above. Therefore, collecting estimates and noting that $\frac{1}{p}+\frac{p-p_0}{p_0p}=\frac{1}{p_0}$, we obtain
\begin{equation*}
\left (  \int_Q|f-f_{Q}|^{p} \,wdx  \right )^\frac{1}{p}  \leq C\ell(Q) \left (\int_Q g^{p} \,wdx\right )^{\frac{1}{p}}.
\end{equation*}

\end{proof}

\section{Poincar\'e-Sobolev type inequalities}\label{sec:PoincareSobolev}

In this section we will present the proof of Theorem \ref{thm:ptimes-Aq}. Here the key step to obtain meaningful Poincar\'e type inequalities is to find nontrivial examples of functionals $a$ and its corresponding indices $p_{w}^*$. To that end, we will consider functional of the form
$$
a(Q)=\ell(Q)\left(\frac{1}{w(Q)}\mu(Q) \right)^{1/p},
$$
and assume that the weight $w$ is in $A_q$ for some $1\le q\le p<n$. Note that this includes both endpoint cases $q=1$ and $q=p$. The idea is that we will use the fact of $w$ being an $A_p$ weight to build the specific functional for Poincar\'e inequalities. But assuming a stronger condition, namely $w\in A_q$, gives us better estimates for the index $p_{w}^*$.

The following definition proposes a suitable index $p^*$ associated to the values of $p,q$  related to the geometric properties of the weight $w$. In addition, we will consider an auxiliary parameter $M>1$ (that will also depend on $w$) to achieve the desired smallness preservation.

\begin{definition}\label{def:p*M}
Consider two indices $p,q$ such that $1\le q\le p < n$. For $M>1$ we define $p_M^*:=p(n,q,M)$ by the condition
\begin{equation}\label{eq:ptimesM}
\frac{1}{p} -\frac{1}{ p_M^*}=\frac{1}{nqM}.
\end{equation}
\end{definition}
Note that $p_M^*$ is smaller than the classical Sobolev exponent, namely the sharp one corresponding to the Lebsegue measure case:
\begin{equation*}
p<p_M^*<p^*=\frac{pn}{n-p} \qquad 1\leq p<n.
\end{equation*}
 
The next lemma contains the main estimate for a functional of the form \eqref{eq:general-a(Q)}  for $\alpha=1$.

\begin{lemma} \label{lem:smallAq} 

Let $1\le q\le p<n$,    and let $a(Q)$ defined as 

$$a(Q)=\ell(Q)\left(\frac{1}{w(Q)}\mu(Q) \right)^{1/p}.
$$
Let $w \in A_q$ and $p_M^*$ defined as in \eqref{eq:ptimesM}. Then for any family $\{Q_i\}$ of pairwise disjoint subcubes of $Q$ such that $\{Q_i\}\in S(L)$, $L>1$, the following inequality holds:
\begin{equation}\label{eq:Keyestimate}
\sum_{i}a(Q_i)^{ p^*_{M} }\,w(Q_i) \leq  [w]_{A_q}^{ \frac{ p^*_{M} }{nqM} }
\,\left (  \frac{1}{L}   \right )^{ \frac{ p^*_{M} }{nM'} } a(Q)^{ p^*_{M} } w(Q).
\end{equation}
This condition says that the functional $a$ ``preserves smallness'' for the exponent $p_M^*$ defined in \eqref{eq:ptimes-Aq} with index $nM'$ and constant $[w]_{A_q}^{ \frac{ 1 }{nqM} }$. That is, $a\in SD_{p^*_M}^{nM'}(w)$.
\end{lemma}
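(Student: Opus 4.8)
The plan is to mimic the computation in Lemma~\ref{lem:L-small}, but to exploit the $A_q$ membership of $w$ precisely where Lemma~\ref{lem:L-small} only used the trivial bound $\mu(Q_i)\le\mu(Q)$ or raw convexity. First I would start from
$$
a(Q_i)^{p^*_M}w(Q_i)=\ell(Q_i)^{p^*_M}\Bigl(\frac{\mu(Q_i)}{w(Q_i)}\Bigr)^{p^*_M/p}w(Q_i)
=|Q_i|^{p^*_M/n}\,\mu(Q_i)^{p^*_M/p}\,w(Q_i)^{1-p^*_M/p},
$$
and note that the exponent of $w(Q_i)$ is negative (since $p^*_M>p$), so a lower bound on $w(Q_i)$ is what is needed. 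Here is the key geometric point: by \eqref{eq:prop-Ap-set} applied with the set $E=Q_i$ inside the cube $Q$ (using $w\in A_q$), we get
$$
\frac{|Q_i|}{|Q|}\le[w]_{A_q}^{1/q}\Bigl(\frac{w(Q_i)}{w(Q)}\Bigr)^{1/q},
\qquad\text{i.e.}\qquad
w(Q_i)\ge[w]_{A_q}^{-1}\Bigl(\frac{|Q_i|}{|Q|}\Bigr)^q w(Q).
$$
Substituting this into the negative power $w(Q_i)^{1-p^*_M/p}$ converts each term into a product of powers of $|Q_i|$, $\mu(Q_i)$, $|Q|$, $w(Q)$, with the $[w]_{A_q}$ constant appearing to the power $(p^*_M/p-1)=p^*_M/(nqM)$ — which is exactly the advertised constant $[w]_{A_q}^{p^*_M/(nqM)}$ after the final bookkeeping.

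Next I would sum over $i$ and apply H\"older's inequality to separate the $|Q_i|$-powers from the $\mu(Q_i)$-powers, exactly as in Lemma~\ref{lem:L-small}: the $\mu(Q_i)$-exponent should be arranged (by the choice \eqref{eq:ptimesM} of $p^*_M$) to be exactly $p$ so that the conjugate H\"older factor gives $\bigl(\sum_i\mu(Q_i)\bigr)^{\le1}\le\mu(Q)$, and the remaining factor is a power of $\sum_i|Q_i|$. At this point the smallness hypothesis $\{Q_i\}\in S(L)$ enters: $\sum_i|Q_i|\le|Q|/L$, and the surviving power of $\sum_i|Q_i|/|Q|$ is, by construction, $p^*_M/(nM')$, producing the factor $(1/L)^{p^*_M/(nM')}$. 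Collecting the powers of $|Q|$ and $w(Q)$ and using $|Q|^{p^*_M/n}\mu(Q)^{p^*_M/p}w(Q)^{1-p^*_M/p}=a(Q)^{p^*_M}w(Q)$ closes the estimate. One has to check the arithmetic of exponents carefully — that the $w(Q_i)$-power times the $q$ from \eqref{eq:prop-Ap-set} combines with the $|Q_i|$-power to leave the two clean H\"older exponents summing correctly, and that $1-q(p^*_M/p-1)$ is nonnegative so the H\"older step is legitimate — but this is all driven by the single algebraic identity \eqref{eq:ptimesM}.

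The main obstacle, and the reason this is a genuine improvement over Lemma~\ref{lem:L-small}, is finding the right way to insert the $A_q$ inequality so that the loss is controlled. The naive bound $w(Q_i)\le w(Q)$ (used implicitly before) would make the negative power of $w(Q_i)$ \emph{increase} the sum, which is useless; it is the \emph{reverse} inequality \eqref{eq:prop-Ap-set}, i.e. the lower bound on $w(Q_i)$ in terms of $|Q_i|/|Q|$, that does the work, and one must be careful that the extra power of $[w]_{A_q}$ it costs is exactly the small quantity $p^*_M/(nqM)$ dictated by \eqref{eq:ptimesM} and not larger. There is also a minor technical point: \eqref{eq:prop-Ap-set} is stated for $w\in A_p$ with constant $[w]_{A_p}$, so I would either restate it for $w\in A_q$ (valid verbatim, replacing $p$ by $q$) or invoke $A_q\subset A_p$ with $[w]_{A_p}\le[w]_{A_q}$; using $A_q$ directly gives the sharper constant claimed. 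With that choice made, the remainder is the routine H\"older-plus-smallness computation described above, and the conclusion $a\in SD_{p^*_M}^{nM'}(w)$ with $\|a\|\le[w]_{A_q}^{1/(nqM)}$ follows by comparing with Definition~\ref{def:SDp}.
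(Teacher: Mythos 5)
Your plan matches the paper's proof step for step: the paper likewise inserts the $A_q$ lower bound on $w(Q_i)$ from \eqref{eq:prop-Ap-set} (phrased as the split $|Q_i|/w(Q_i)^{1/(qM)}=|Q_i|^{1/M'}\bigl(|Q_i|/w(Q_i)^{1/q}\bigr)^{1/M}$, which is algebraically identical to your full substitution of the lower bound into the negative power of $w(Q_i)$), then applies H\"older's inequality with $t=nM'/p^*_M$, then the smallness hypothesis $\sum_i|Q_i|\le|Q|/L$, and the constant $[w]_{A_q}^{p^*_M/(nqM)}$ emerges exactly as you predict. One bookkeeping correction: the condition the H\"older step actually requires is $p^*_M/(nM')<1$ (which the paper verifies from $p<n$), whereas the quantity $1-q(p^*_M/p-1)$ you flag equals $1-p^*_M/(nM)$ and is not the relevant constraint, and after H\"older the $\mu(Q_i)$-exponent is $t'p^*_M/p\ge1$ (handled by convexity), not $p$.
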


Before proceeding with the proof, we recall from \eqref{eq:prop-Ap-set} that for $A_q$ weights we have the geometric estimate
\begin{equation*}
\left (\frac{|E|}{|Q|}  \right )^{q}\leq [w]_{A_q}  \frac{w(E)}{w(Q)},
\end{equation*}
valid for any subset   $E\subset Q$.

\begin{proof}[Proof of Lemma \ref{lem:smallAq}]

Let $M>1$. For simplicity in the exposition, we will omit the subindex $M$ and just use $p^*$ instead of $p^*_M$. To verify the smallness preservation for the functional $a$, we compute
\begin{eqnarray*}
\sum_{i}a(Q_i)^{p^*}w(Q_i) & = & \sum_{i} \mu(Q_i)^{ \frac{p^*}{p} }    \left (    \frac{\ell(Q_i)}{w(Q_i)^{\frac{1}{p} -\frac{1}{p^*}} } \right )^{ p^*  }          \\
& = &  \sum_{i} \mu(Q_i)^{\frac{p^*}{p}}    \left(    \frac{ |Q_i| }{w(Q_i)^{ \frac{1}{qM}  } } \right )^{ \frac{p^*}{n}  } \\
& \leq &  [w]_{A_q}^{\frac{p^*}{nqM} }
\left( \frac{|Q|^{q}}{w(Q) }   \right)  ^{ \frac{p^*}{nqM}   }  \  
  \sum_{i} \mu(Q_i)^{\frac{p^*}{p}}    |Q_i|^{ \frac{p^* }{ n M'}   } \\
\end{eqnarray*}
Now, since $p<n$, $\frac{p^*}{nM'}<1$ and we use H\"older's inequality with $t=\frac{nM'}{p^* }>1$. Then
\begin{eqnarray*}
\sum_{i}a(Q_i)^{p^*}w(Q_i) &\le & 
\left( \frac{[w]_{A_q}|Q|^{q}}{w(Q) }   \right)  ^{ \frac{p^*}{nqM}   }    
\left (\sum_i  \mu(Q_i)^{  \frac{t'p^*}{p} }    \right )^\frac{1}{t'} \left (\sum_i |Q_i|\right )^{ \frac{p^*}{nM'}  }\\
&\leq &   \left( \frac{[w]_{A_q}|Q|^{q}}{w(Q) }   \right)  ^{ \frac{p^*}{nqM}   }        
 \left (\sum_i  \mu(Q_i)    
\right )^{ \frac{p^*}{p} }  \, |Q|^{ \frac{p^*}{nM'}  }\, \left (  \frac{1}{L}   \right )^{ \frac{p^*}{nM'}  }
\\
&= &  [w]_{A_q}^{\frac{p^*}{nqM} }
\frac{\ell(Q)^{p^*}}{w(Q)^{  \frac{p^*}{npM}} }
\mu(Q)^{ \frac{p^*}{p} }  \, \left (  \frac{1}{L}   \right )^{ \frac{p^*}{nM'}  }
\\
&= &  
[w]_{A_q}^{\frac{p^*}{nqM} }
a(Q)^{p^*}w(Q)\,\left (  \frac{1}{L}   \right )^{ \frac{p^*}{nM'}  }\\
\end{eqnarray*}
\end{proof}

\begin{proof} [Proof of Theorem \ref{thm:ptimes-Aq}] 
The proof of this theorem is just the proof of Theorem \ref{thm:Lp(w)-a(Q)-clean}  combined  with Lemma \ref{lem:smallAq} with a different choice of the parameters $L$ and $M$ there. More precisely, Lemma \ref{lem:smallAq} says that the functional $a$ satisfies the smallness preservation property for the index $p_M^*$ defined in \eqref{eq:ptimesM} with exponent $s=nM'$ and norm $\|a\|=[w]^{\frac{1}{nqM}}_{A_q}$.
If we choose $M=1+\log[w]^{\frac{1}{q}}_{A_q}$, then the proof of Theorem \ref{thm:Lp(w)-a(Q)-clean} produces the estimate
\begin{eqnarray*}
\left( \frac{1}{ w(Q)  } \int_{ Q }   |f -f_{Q}|^{p_M^*}    \,wdx\right)^{\frac{1}{p_M^*}}  & \leq  & C 2^{n+1} s\, \|a\|^{s}\, a(Q) \\
& \le & C2^{n+1} nM'[w]_{A_q}^{\frac{M'}{qM}} \, a(Q)\\
& \le & C2^{n+1} n \frac{1+\log[w]^{\frac{1}{q}}_{A_q}}{\log[w]^{\frac{1}{q}}_{A_q}}[w]_{A_q}^{\frac{1}{\log[w]_{A_q}}} \, a(Q)\\
& \le & C2^{n+2} n \, a(Q)
\end{eqnarray*}
where we used that $t^{\frac{1}{\log t}}=e$ and assumed that $[w]_{A_q}\geq e^q$. 
In the contrary we can use Theorem \ref{thm:general-a(Q)}  since the functional $a$ satisfies the $D_{p}(w)$ condition \ref{eq:Dp} with $p=p^*_M$ (this is nothing more than  Lemma \ref{lem:smallAq} with $L=1$) and since $w\in A_{\infty}$. Now, the bounds obtained in the proof of Theorem \ref{thm:general-a(Q)} are not precise but are given by $\varphi ([w]_{A_q})$ with increasing $\varphi$ and hence the result holds also in the case 
$[w]_{A_q}\leq e^q$.   More precisely, we use the fact that the $A_q$ condition is an open ended property: any $A_q$ weight is also an $A_{q-\varepsilon}$ weight for a small value of $\varepsilon>0$ depending on $w$. Hence, the functional $a$ will also satisfy a $D_{p^*_M +\delta}(w)$  for some $\delta>0$. Then, Theorem \ref{thm:general-a(Q)} provides a \emph{weak} estimate for the exponent $p^*_M +\delta>p^*_M$. This implies, by Kolmogorov's inequality, the desired \emph{strong} estimate.

\end{proof}

Now we can derive the result in Corollary \ref{cor:ptimes-Aq} as follows.

\begin{proof}[Proof of Corollary \ref{cor:ptimes-Aq}]
Let us start by using the unweighted $(1,1)$ Poincar\'e inequality as in Corollary \ref{cor:Poincare(p,p)-twoweight}. Since the weight is in $A_p$, we can build the functional $a$ in the same way and obtain the starting point 
\begin{equation}\label{eq:mixed-L1Lp-2}
\avgint_Q|f(x)-f_Q|dx\le C [w]^{\frac{1}{p}}_{A_p}\ell(Q)\left (\frac{1}{w(Q)}\int_Q |\nabla f(x)|^p w\ dx\right )^{1/p}.
\end{equation}
where
$$a(f,Q):= C [w]^{\frac{1}{p}}_{A_p}\ell(Q)\left (\frac{1}{w(Q)}\int_Q |\nabla f(x)|^p w\, dx\right )^{1/p}$$

At this point, we have already paid with the unavoidable quantity $[w]^{\frac{1}{p}}_{A_p}$  to build the functional $a$. Now, knowing something extra about the weight, that is, $w\in A_q$, allows us to reach a higher exponent. Theorem \ref{thm:ptimes-Aq} is applicable, so we obtain the desired inequality:
$$
\left (\frac{1}{w(Q)}\int_Q |f-f_Q|^{p_w^*}\, w\ dx\right )^{\frac{1}{p_w^*}} \leq [w]^{\frac{1}{p}}_{A_p}\ell(Q)\left (\frac{1}{w(Q)}\int_Q|\nabla f|^{p} \, w \ dx\right )^{1/p}.
$$

\end{proof}

\section{Using the  good-\texorpdfstring{$\lambda$}{goodlambda} }\label{sec:goodL}

In this section we will provide the proof of Theorem \ref{thm:goodL-aQ}.

\begin{proof}[Proof of Theorem   \ref{thm:goodL-aQ}]
Although we have introduced the notation $\|a\|_{D_r(w)}$ in the statement of this theorem, here the $SD_p^n$ norm of the functional $a$ is assumed to be equal to 1 and therefore there is no chance of confusion. We choose then to simplify the notation and simply write $\|a\|$ instead of $ \|a\|_{D_r(w)} $.

Now we proceed with the proff. Fix a cube $Q$, we have to prove that

\begin{equation}\label{eq:claim-weaknorm}
t^{r}\,
\frac{
w(
\{ x \in Q : |f(x)-f_{Q}|  > t\}
)
} { w(Q) }
\le
(c\|a\|)^{r}\,   [w]^{\frac{r}{p}}_{A_p}\, a(Q)^r
\end{equation}
with $c$ independent of $Q$, $t$ and $[w]_{A_{p}}$. 

Now, for each $t>0$, we let 
$$\Omega_{t} = \{ x \in Q : M(f-f_Q)(x) >
t\}$$
where $M$ will denote in this proof the dyadic
Hardy--Littlewood maximal function relative to $Q$. Then by the Lebesgue differentiation theorem
\[
\{ x \in Q : |f(x) -f_Q|  > t\} \subset \Omega_{t}.
\]
We will assume that $t >a(Q)$ since otherwise \eqref{eq:claim-weaknorm} is trivial. Hence
\[
t > a(Q)\ge
\frac{1}{|Q|}\int_{Q} |f-f_Q|
\]
and we can consider the Calder\'{o}n--Zygmund covering lemma of $|f-f_Q|$ relative to $Q$ for these values of $t$. This yields a collection $\{Q_{i}\}$ of dyadic subcubes of $Q$, maximal with
respect to inclusion, satisfying $ \Omega_{t}= \cup_{i}Q_{i}$ and
\begin{equation*}
t < \frac{ 1 }{ |Q_{i}| }
\int_{Q_{i}} |f-f_Q| \le 2^{n}\,t
\end{equation*}
for each $i$. Now let $q>1$ a big enough number that will be chosen in a
moment. Since $\Omega_{q\,t} \subset \Omega_{t}$, we have that
\begin{eqnarray*}
w(\Omega_{q\,t}) & = & w( \Omega_{q\,t} \cap \Omega_{t} ) \\
& =&  \sum_{i} w ( \{x\in Q_{i}: M(f-f_Q)(x)> q\,t \} )\\
&=&\sum_{i} w( \{ x \in Q_{i}: M( (f-f_Q) \chi_{ Q_{i} })(x) >qt \})
\end{eqnarray*}
where the last equation follows by the maximality of each of the cubes $Q_{i}$. Indeed, for any of these $i$'s and $x\in Q_i$, we have
\begin{eqnarray*}
M( |f -f_Q|\chi_{ Q })(x) & = &\max
\{
\sup_{
\stackrel{P: x \in P \in {\mathcal D}(Q) }{P \subseteq   Q_i} }\avgint_{P}|f-f_Q|,
\sup_{
\stackrel{P: x \in P \in {\mathcal D}(Q) }{P \supset Q_i } }\avgint_{P}|f-f_Q|
\}\\
& = & \sup_{
\stackrel{P:x \in P \in {\mathcal D}(Q) }{P \subset Q_i } }\avgint_{P}|f|\\
& = & 
M((f-f_Q)\chi_{Q_i})(x),
\end{eqnarray*}
since by the maximality of the cubes $Q_i$ when $P$ is dyadic (relative to $Q$) containing $Q_i$ then
\[
\frac{1}{|P|}\int_{P}|f-f_Q| \le t.
\]
On the other hand for arbitrary $x$,
\begin{eqnarray*}
|f(x)-f_Q| & \le & \absval{f(x)-f_{Q_{i}}} + \absval{f_Q-f_{Q_{i}}}\\
& \le & \absval{f(x)-f_{Q_{i}}} + \frac{1}{ |Q_{i}|} \int_{Q_{i}} |f-f_Q|\\
& \le & \absval{f(x)-f_{Q_{i}}} + 2^{n}\,t
\end{eqnarray*}
and then for $q=2^n+1$
\[
w(\Omega_{q\,t}) \le
\sum_{i} w(E_{Q_{i}}),
\]
where 
$$E_{Q_{i}} =\{x\in Q_{i}: M((f-f_{Q_{i}})\chi_{Q_{i}})(x)>t\}. 
$$

Let $\epsilon >0 $ to be chosen in a moment.  We split the family $\{Q_{i}\}$ in two sets of indices $I$ and $II$:

\begin{equation*}
i \in I \text{ if } a(Q_i) < \epsilon t \qquad\text{ and }\qquad i \in II  \text{ if } a(Q_i)\geq \epsilon t
\end{equation*}

Then
\[
w(\Omega_{q\,t}) \le
\sum_{i} w(E_{Q_{i}}) \le
\sum_{i\in I} w(E_{Q_{i}})+ \sum_{i\in II} w(E_{Q_{i}}) =I+II.
\]
Since $w\in A_p$ we use that $M$ is of weak type $(p,p)$ (with norm bounded by $[w]^{\frac1p}_{A_p}$) to control the the size of $E_{Q_i}$:
\begin{equation*} 
w(E_{Q_{i}}) \le \frac{ [w]_{A_p} }{t^p} \,
\frac{ 1 }{ w(Q_{i}) }
\int_{Q_{i}} \absval{ f-f_{ Q_{i}}  }^p\, wdx  \, w(Q_{i}).
\end{equation*}
And here is where the condition $(1)$ of the functional comes into play. Since we are assuming that $a$ preserves smallness, that is $a\in SD^{n}_p(w)$, we can apply Theorem \ref{thm:Lp(w)-a(Q)-clean} on the cube $Q_i$ to obtain 
\begin{equation}
w(E_{Q_{i}})   \leq c_n [w]_{A_p}\,\frac{a(Q_i)^p}{t^p}\,w(Q_{i}) \leq c_n [w]_{A_p}\,  \epsilon^p  \,w(Q_{i}),
\end{equation}
We conclude that 
\[
I \leq c_n[w]_{A_p}\,
\epsilon^p\,   w( \Omega_{t} ).
\]
For $II$ we use the hypothesis $(2)$ on $a$ in the theorem (which is simply condition $D_p(w)$ without smallness preservation). We have that
\begin{eqnarray*}
II & \le & \sum_{i \in II }w(Q_{i}) \le\sum_{i \in II }\left(\frac{1}{t\,\epsilon } a(Q_{i}) 
\right)^{r}w(Q_{i})\\
& \le & \frac{1}{ t^{r}\,\epsilon^{r} } \sum_{i } a(Q_{i})^{r}
w(Q_{i}) \leq \frac{\|a\|^r}{t^{r}\, \epsilon^{r} }
a(Q)^{r} w(Q).
\end{eqnarray*}
Combining all these estimates we have that
for $q=1+2^{n}$, 
\begin{equation*}
(qt)^{r}\, \frac{ w( \Omega _{qt} ) } { w(Q) }
\leq
q^r  t^{r}\, [w]_{A_p}\, \epsilon^p\,   \frac{ w( \Omega _{t} ) } { w(Q) }
+
\frac{ \|a\|^r q^{r} }{ \epsilon^{r}  }a(Q)^{r}    
\end{equation*}
for $t>a(Q)$. Observe that if we choose  $\epsilon \leq 1$, the same inequality holds for $t\le a(Q)$. Combining, we have the following inequality 
for $q=1+2^{n}$, $t>0$ and $0<\epsilon \leq 1$
\begin{equation*}
(qt)^{r}\, \frac{ w( \Omega _{qt} ) } { w(Q) }
\leq
q^r  t^{r}\, [w]_{A_p}\, \epsilon^p\,   \frac{ w( \Omega _{t} ) } { w(Q) }
+
\frac{ \|a\|^r q^{r} }{ \epsilon^{r}  }a(Q)^{r}    \quad t>0.
\end{equation*}
To conclude we use a standard good--$\lambda$ method. For $N>0$ we
let
\[
\varphi(N) =
\sup_{0<t<N} t^{r}\, \frac{ w( \Omega _{t} ) } { w(Q) },
\]
which is finite since is bounded by $N^{r}$. Since $\varphi$ is increasing we obtain that 
\[
\varphi(N)\le  \varphi(Nq)\leq
q^r\,      \, [w]_{A_{p}}\,\epsilon^p\,  \,  
 \varphi(N)
+
\frac{ \|a\|^r }{ \epsilon^{r}  }a(Q)^{r}.
\]
We now conclude by choosing $\epsilon$ such that
$$ q^r  \, [w]_{A_p}\, \epsilon^p = \frac12,  $$
which is equivalent to 
$$\frac{1}{\epsilon } =   2^{\frac1p} q^{\frac{r}{p}}\,  [w]_{A_p}^{\frac1p}.
$$
We conclude the proof by letting $N
\rightarrow \infty$.

\end{proof}

The proof of Lemma \ref{lem:new-smallAp} below provides a precise value of $r$ for the $D_r(w)$ condition satisfied for a functional $a$ of  the form \eqref{eq:general-a(Q)} with $\alpha=1$.

\begin{proof}    [Proof of Lemma   \ref{lem:new-smallAp}]

We will use again that if  $E\subset Q$ 
\begin{equation*}
\left (\frac{|E|}{|Q|}  \right )^{q}\leq[w]_{A_q}  \frac{w(E)}{w(Q)}.
\end{equation*}

As before,  for simplicity in the exposition, we will use $p^*$ instead of $p^*_w$.

Then, 
\begin{eqnarray*}
\sum_{i}a(Q_i)^{p^*  }w(Q_i) & = & \sum_{i} \mu(Q_i)^{ \frac{p^*  }{p} }    \left (    \frac{\ell(Q_i)}{w(Q_i)^{\frac{1}{p} -\frac{1}{p^*  }} } \right )^{ p^*    }          \\
& = & \sum_{i} \mu(Q_i)^{ \frac{p^*  }{p} }    \left (    \frac{ |Q_i|  ^{\frac1n}  }{w(Q_i)^{ \frac{1}{nq}  } } \right )^{ p^*    }        \\
& = &  \sum_{i} \mu(Q_i)^{\frac{p^*  }{p}}    \left(    \frac{ |Q_i| }{w(Q_i)^{ \frac{1}{q}  } } \right )^{ \frac{p^*   }{n}  } \\
& \leq &  [w]_{A_q}^{ \frac{ p^*  }{nq} }
\left( \frac{|Q|}{w(Q)^{\frac1q} }   \right)  ^{  \frac{p^*  }{n}   }  
  \sum_{i} \mu(Q_i)^{\frac{p^*  }{p}}     \\
& \leq &  [w]_{A_q}^{ \frac{ p^*}{nq} }
\left( \frac{|Q|}{w(Q)^{\frac1q} }   \right)  ^{  \frac{p^*  }{n}   }  
  \mu(Q)^{\frac{p^*  }{p}}     \\
&= &  
[w]_{A_q}^{ \frac{p^*  }{nq} }\,
a(Q)^{p^*  }w(Q)    \\
\end{eqnarray*}
and hence \,$\|a\|\leq [w]_{A_q}^{ \frac{1}{nq} }$

\end{proof}

\begin{proof}[Proof of Corollary \ref{cor:MainCoro}] Let us consider again inequality \eqref{eq:mixed-L1Lp-2} as a  starting point. Then we have that  
\begin{equation*}
\avgint_Q|f(x)-f_Q|dx\le C [w]^{\frac{1}{p}}_{A_p}\ell(Q)\left (\frac{1}{w(Q)}\int_Q |\nabla f(x)|^p w\ dx\right )^{1/p}.
\end{equation*}
Define the functional 
$$a(f,Q):= C [w]^{\frac{1}{p}}_{A_p}\ell(Q)\left (\frac{1}{w(Q)}\int_Q |\nabla f(x)|^p w\, dx\right )^{1/p}.$$
By Lemmas \ref{lem:L-small} and \ref{lem:new-smallAp}, we have conditions (1) and (2) from Theorem \ref{thm:goodL-aQ}. Taking into account the value of $\|a\|$ computed above, we obtain the desired estimate by using the  truncation method.
\end{proof}

\section{A mixed Poincar\'e inequality and applications to \texorpdfstring{$A_1$}{A1} weights}\label{sec:mixed}

We present here a different approach to the problem of weighted Poincar\'e inequalities and present the proof of Theorem \ref{thm:PoincareSobolev-2weights} and its consequences. 
We will use the following very simple lemma.

\begin{lemma}\label{lem:vanish}
Let $\mu$ be a finite measure such that $\text{supp}(\mu)\subset \Omega\subset \mathbb{R}^n$.  Consider a subset $E\subset \Omega$ such that $\mu(E)\ge \lambda\mu(\Omega)$ for some $\lambda \in (0,1)$ and a function $f$ vanishing on $E$. Then, for any constant $a\in \mathbb{R}$, we have that 
\begin{equation}\label{eq:vanish}
\|a\|_{L^q_\mu}\le \frac{1}{\lambda^q}\|f-a\|_{L^q_\mu}
\end{equation}
\end{lemma}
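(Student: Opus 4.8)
The plan is to reduce the inequality to a one-line estimate obtained by integrating only over the set $E$ on which $f$ vanishes. First I would note that since $a$ is a constant and $\mu$ is supported in $\Omega$, the left-hand side is simply $\|a\|_{L^q_\mu}=|a|\,\mu(\Omega)^{1/q}$; so everything comes down to bounding $\|f-a\|_{L^q_\mu}$ from below by a multiple of $|a|\,\mu(\Omega)^{1/q}$, and the natural way to do this is to throw away the contribution of $\Omega\setminus E$.

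Concretely, the key step is the chain of (in)equalities
\begin{equation*}
\|f-a\|_{L^q_\mu}^q=\int_\Omega |f-a|^q\,d\mu\ \ge\ \int_E |f-a|^q\,d\mu\ =\ |a|^q\,\mu(E)\ \ge\ \lambda\,|a|^q\,\mu(\Omega),
\end{equation*}
where the middle equality uses $f\equiv 0$ on $E\subset\Omega$ and the last inequality is the hypothesis $\mu(E)\ge\lambda\,\mu(\Omega)$. Taking $q$-th roots gives $\|f-a\|_{L^q_\mu}\ge\lambda^{1/q}\|a\|_{L^q_\mu}$, i.e. $\|a\|_{L^q_\mu}\le\lambda^{-1/q}\|f-a\|_{L^q_\mu}$, and since $\lambda\in(0,1)$ and $q\ge 1$ one has $\lambda^{q}\le\lambda^{1/q}$, hence $\lambda^{-1/q}\le\lambda^{-q}$, which yields the stated bound \eqref{eq:vanish}.

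There is essentially no obstacle here: the argument is a direct restriction-of-the-integral estimate, and the only point worth a word of care is the (harmless) gap between the sharp constant $\lambda^{-1/q}$ that the computation actually produces and the weaker, more convenient constant $\lambda^{-q}$ appearing in the statement, the two being comparable precisely in the relevant range $q\ge 1$. In the applications where the dependence on $\lambda$ matters one may of course keep $\lambda^{-1/q}$ instead.
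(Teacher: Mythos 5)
Your argument is correct and reaches the same (sharper) constant $\lambda^{-1/q}$ that the paper's own proof actually produces, despite the statement's weaker $\lambda^{-q}$. The paper's proof runs the same calculation in the opposite direction: it starts from $\|a\|_{L^q_\mu}=|a|\,\mu(\Omega)^{1/q}$, uses $\mu(\Omega)\le \mu(E)/\lambda$, writes $|a|=\frac{1}{\mu(E)}\int_E|f-a|\,d\mu$, and then applies H\"older's (Jensen's) inequality on $E$ to reach $\|f-a\|_{L^q_\mu}$. Your version skips the H\"older step entirely by observing that $|f-a|^q\equiv |a|^q$ on $E$, so the restricted integral can be evaluated exactly; this is a minor but genuine simplification. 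You are also right to flag the mismatch between $\lambda^{-1/q}$ and the stated $\lambda^{-q}$: the paper later uses the lemma with the constant $(1/\lambda)^{1/q}=2^{1/n'}$ (see the proof of Theorem~\ref{thm:PoincareSobolev-2weights}), so the exponent in \eqref{eq:vanish} is evidently a typo for $1/q$, and your reconciliation via $\lambda^{-1/q}\le\lambda^{-q}$ for $q\ge 1$ is the honest way to square the computed bound with the printed one.
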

\begin{proof}
A straightforward computation shows that

\begin{eqnarray*}
\|a\|_{L_\mu^q}&=& \mu(\Omega)^\frac{1}{q}|a|\\
&\le & (\frac{1}{\lambda})^{1/q}\mu(E)^\frac{1}{q}\frac{1}{\mu(E)}\int_E|f(x)-a|  \ d\mu\\
& \le &  (\frac{1}{\lambda})^{1/q}  \mu(E)^\frac{1}{q}\frac{1}{\mu(E)}\left(\int_E |f(x)-a|^q \ d\mu \right)^\frac{1}{q}\mu(E)^\frac{1}{q'}\\
& \le &   (\frac{1}{\lambda})^{1/q}  \|f-a\|_{L^q_\mu}
\end{eqnarray*}
\end{proof}

Now we are ready to prove  the precise weighted Poincar\'e-Sobolev inequality from Theorem \ref{thm:PoincareSobolev-2weights}.

\begin{proof}[Proof of Theorem \ref{thm:PoincareSobolev-2weights}]
The goal is to prove  inequality \eqref{eq:P(p,p*)-local-avg}: 
\begin{equation*}
\left (\int_Q|f-f_{Q,w}|^{p^*}\ wdx \right )^{\frac{1}{p*}} \le C \left (\int_Q |\nabla f|^p \left (\frac{(M^c(w\chi_Q))^\frac{1}{n'}}{w}\right )^pw\ dx \right )^{\frac{1}{p}}.
\end{equation*} 

The proof of this inequality is based on the following interesting argument used in \cite{CW,DMRT} in a different context. We will use a relatively known equivalence between $(1,1)$ Poincar\'e inequalities and fractional integrals. More precisely, since the $(1,1)$ Poincar\'e inequality is true, the by  Theorem \ref{thm:equiv-weak-strong-1n'-pointiwise} in the Appendix we have that 
\begin{equation*}
\left (\int_Q |f(x)-f_Q|^{n'}\ d\mu\right )^\frac{1}{n'}\le C \int_Q |\nabla f(y)| (M\mu(y))^\frac{1}{n'}\ dy.
\end{equation*} 
We start by showing that this inequality implies a similar inequality without the average $f_Q$ when restricted to certain class of functions with a vanishing condition.

{\bf Claim: } Let $E\subset Q$ be any measurable subset of the cube such that $w(E)\ge \frac{w(Q)}{2}$.  For  any function $f$ vanishing on $E$, we have that 
\begin{equation}\label{eq:P(1,n')-local-NOavg}
\left (\int_Q|f|^{n'}w\ dx\right )^\frac{1}{n'}\le C \int_Q |\nabla f(y)| (Mw(y))^\frac{1}{n'}\ dy.
\end{equation} 

Let us define the measure $\mu=w \chi_Q dx$. Then we have that
\begin{equation*}
\|f\|_{L_\mu^{n'}}\le \|f-f_Q\|_{L_\mu^{n'}}+\|f_Q\|_{L_\mu^{n'}}\le (1+2^\frac{1}{n'})\|f-f_Q\|_{L_\mu^{n'}}
\end{equation*}
by a direct application of Lemma \ref{lem:vanish} with $a=f_Q$. Now we apply inequality \eqref{eq:P(1,n')-mu-local-avg} from Corollary \ref{cor:P(1,n')-mu-local-avg} to obtain
\begin{equation*}
\|f\|_{L_\mu^{n'}}\lesssim\|f-f_Q\|_{L_\mu^{n'}}\lesssim  \int_Q |\nabla f(y)| (M\mu(y))^\frac{1}{n'}\ dy
\end{equation*}
prove the claim and conclude that  inequality \eqref{eq:P(1,n')-local-NOavg} holds.

Now we proceed to an intermediate Poincar\'e-Sobolev inequality with the pair of exponents $(p^*,p)$. Let us denote by $g_-$ and $g_+$ the positive and negative parts of a measurable function $g$. Since we are integrating over a cube we can use that, for a given function $f$, there is a real number $\lambda$ such that 
\begin{equation}\label{eq:split}
\int_Q(f(x)-\lambda)_+^{p*} \ dx = \int_Q(f(x)-\lambda)_-^{p*} \ dx 
\end{equation}
We also have (for the same $\lambda$) that for any $q>1$,
$$Q=\{x\in Q: (f(x)-\lambda)^q_+=0\}\cup \{x\in Q: (f(x)-\lambda)^q_-=0\}.$$

We can assume that $w( \{x\in Q: (f(x)-\lambda)^q_+=0\})\ge w(Q)/2$ and then the function $(f-\lambda)_+^q$ satisfies the hypothesis from the claim above. Choosing $q>1$ such that $q=\frac{p*}{n'}=\frac{p(n-1)}{n-p}$, we apply \eqref{eq:P(1,n')-local-NOavg} to obtain
\begin{eqnarray*}
\left (\int_Q(f-\lambda)_+^{p*} w \right) ^\frac{1}{n'} &\lesssim & \int_Q|f-\lambda|^{q-1} |\nabla f|(Mw)^\frac{1}{n'} \\
& \le & \int_Q|f-\lambda|^{q-1} w^\frac{1}{p'}|\nabla f|\frac{(Mw)^\frac{1}{n'}}{w}w^\frac{1}{p} \\
& \le & \left (\int_Q|f-\lambda|^{(q-1)p'}w\right )^\frac{1}{p'}\left (\int_Q|\nabla f|^p\frac{(M^cw)^\frac{p}{n'}}{w^p}w \right)^\frac{1}{p}
\end{eqnarray*}
Since $(q-1)p'=\frac{n(p-1)p}{(n-p)(p-1)}=p*$ and $\frac{1}{n'}-\frac{1}{p'}=\frac{1}{p*}$, we obtain that
\begin{equation*}
\left (\int_Q(f(x)-\lambda)_+^{p*} w(x) \ dx\right) ^\frac{1}{p^*}\lesssim \left (\int_Q|\nabla f(x)|^p\left (\frac{(M^cw)^\frac{1}{n'}}{w}\right )^pw \ dx\right) ^\frac{1}{p}
\end{equation*}
By the relation in \eqref{eq:split}, the same inequality holds for the negative part $(f-\lambda)_-$, so we obtain the estimate
\begin{equation}\label{eq:f-lambda}
\|f-\lambda\|_{L_\mu^{p*}}\lesssim \|\nabla f\|_{L^p_v}
\end{equation}
where $v=\left (\frac{(M^cw)^\frac{1}{n'}}{w}\right )^pw$ and $\mu=w\chi_Q dx$. We also have, by Jensen's inequality, that
\begin{eqnarray*}
\|\lambda-f_{Q,w}\|_{L_\mu^{p*}} & = &  w(Q)^{1/p*}|\lambda- f_{Q,w}|\\
& \le & w(Q)^{1/p*} \int_Q |\lambda-f(x)| \frac{w(x)}{w(Q)}\ dx\\
& \le & w(Q)^{1/p*} \left (\int_Q |\lambda-f(x)|^{p^*} \frac{w(x)}{w(Q)}\ dx\right )^\frac{1}{p^*}\\
& \le &  \|f-\lambda\|_{L_\mu^{p*}}
\end{eqnarray*}
Collecting all previous estimates, we obtain the desired result. Note that 
\begin{equation*}
\|f-f_{Q,w}\|_{L_\mu^{p*}}\le \|f-\lambda\|_{L_\mu^{p*}}+\|\lambda-f_{Q,w}\|_{L_\mu^{p*}}\lesssim 2\|f-\lambda\|_{L_\mu^{p*}}.
\end{equation*}
We complete the proof by applying \eqref{eq:f-lambda} to obtain the desired estimate \eqref{eq:P(p,p*)-local-avg}:
\begin{equation*}
\left (\int_{Q}|f-f_{Q,w}|^{p^*}\ wdx\right )^{p*}\le C \int_Q |\nabla f|^p \left (\frac{(M^cw)^\frac{1}{n'}}{w}\right )^pw\ dy.
\end{equation*}

\end{proof}

\subsection{The case of \texorpdfstring{$A_1$}{A1} weights}

Now we can derive as a consequence the proof of the $A_1$ result.

\begin{proof}[Proof of Corollary \ref{thm:P(p,p*)-local-avg-A1}]
We start from \eqref{eq:P(p,p*)-local-avg} proved above using again that $v=\left (\frac{(M^cw)^{1/n'}}{w}\right )^pw$ to rewrite it as:
\begin{eqnarray*}
\left (\frac{1}{w(Q)}\int_Q|f-f_{Q,w}|^{p*}w\right )^\frac{1}{p*} & \lesssim & \frac{w(Q)^{1/p}}{w(Q)^{1/p*}} \left (\frac{1}{w(Q)}\int_Q |\nabla f|^p v\right )^\frac{1}{p}\\
& \lesssim & \left ( \frac{w(Q)}{|Q|}\right )^{1/n}|Q|^{1/n}\left (\frac{1}{w(Q)}\int_Q |\nabla f|^p v\right )^\frac{1}{p}\\
\end{eqnarray*}
since $\frac{1}{p}-\frac{1}{p^*}=\frac{1}{n}$. Then, we can control the average by using the maximal function to obtain that
\begin{eqnarray*}
\left (\frac{1}{w(Q)}\int_Q|f-f_{Q,w}|^{p*}w\right )^\frac{1}{p*} & \lesssim & \inf_{x\in Q}\left (Mw(x)\right )^\frac{1}{n} \ell(Q)\left (\frac{1}{w(Q)}\int_Q |\nabla f|^p v\right )^\frac{1}{p}\\
& \lesssim & \ell(Q)\left (\frac{1}{w(Q)}\int_Q |\nabla f|^p \left (\frac{Mw}{w}\right )^pw\right )^\frac{1}{p}\\
& \lesssim & [w]_{A_1}\ell(Q)\left (\frac{1}{w(Q)}\int_Q |\nabla f|^p w\right )^\frac{1}{p}.\\
\end{eqnarray*}
The proof is complete.
\end{proof}

\begin{remark}
We remark that in this case, we can see that the measure $w\ dx$ behaves like the Lebesgue measure at least for the range of local integrability on the left hand side.
\end{remark}

The next question here is regarding the sharpness of the exponent on the $A_1$ constant in the above inequality. We have the following result related to that issue. 

\begin{proposition}\label{pro:LowerBoundBeta}
Suppose that inequality \eqref{eq:P(p,p*)-local-avg-A1} holds with some power on the $A_1$ constant, namely
\begin{equation}\label{eq:P(p,p*)-local-avg-A1-beta}
\left (\frac{1}{w(Q)}\int_Q|f-f_{Q,w}|^{p*}w\right )^\frac{1}{p*}\le C [w]^\beta_{A_1}\ell(Q)\left (\frac{1}{w(Q)}\int_Q |\nabla f|^p w\right )^\frac{1}{p}.
\end{equation} 
Then $\beta\ge\frac{1}{p}$.
\end{proposition}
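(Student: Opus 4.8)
The plan is to test \eqref{eq:P(p,p*)-local-avg-A1-beta} against a one-parameter family of $A_1$ weights whose $A_1$ constant blows up, together with a \emph{fixed} cube and a \emph{fixed} Lipschitz function, arranged so that the left-hand side of \eqref{eq:P(p,p*)-local-avg-A1-beta} stays bounded below by a positive constant while the right-hand side is controlled by $[w]_{A_1}^{\beta}\,\delta^{1/p}$. Letting the parameter $\delta\to 0^{+}$ then forces $\beta\ge 1/p$.

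Concretely, for $0<\delta<1$ I would set
\begin{equation*}
w_\delta(x)=|x_1-1|^{-1+\delta}+|x_1+1|^{-1+\delta},\qquad x\in\mathbb R^{n},
\end{equation*}
and take $Q=[-2,2]^{n}$, so that $\ell(Q)=4$ and the two singular hyperplanes $\{x_1=\pm1\}$ lie in the interior of $Q$. Since the one-dimensional power weight satisfies $[\,|t|^{-1+\delta}\,]_{A_1(\mathbb R)}\approx \delta^{-1}$, since $M(u+v)\le Mu+Mv$ gives $[u+v]_{A_1}\le\max\{[u]_{A_1},[v]_{A_1}\}$, and since a weight depending on a single variable keeps its $A_1$ constant (up to a dimensional factor) when viewed on $\mathbb R^{n}$, one obtains $[w_\delta]_{A_1}\approx \delta^{-1}\to\infty$. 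Using only the elementary facts $\int_0^{a}t^{-1+\delta}\,dt=a^{\delta}/\delta$ and $a^{\delta}\to1$ as $\delta\to0$, a direct computation shows that $w_\delta(Q)\approx\delta^{-1}$ and, crucially, that the $w_\delta$-mass \emph{splits into two comparable halves}:
\begin{equation*}
w_\delta\big(\{x_1>\tfrac12\}\cap Q\big)\ \approx\ w_\delta\big(\{x_1<-\tfrac12\}\cap Q\big)\ \approx\ w_\delta(Q),
\end{equation*}
while the middle slab is light, $w_\delta\big(\{|x_1|<\tfrac12\}\cap Q\big)\approx 1\approx \delta\,w_\delta(Q)$, where there $w_\delta$ is bounded above and below.

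Next I would take $f(x)=\varphi(x_1)$ with $\varphi\equiv1$ on $(-\infty,-\tfrac12]$, $\varphi\equiv0$ on $[\tfrac12,\infty)$, and $\varphi$ affine on $[-\tfrac12,\tfrac12]$, so that $|\nabla f|=\chi_{\{|x_1|<1/2\}}$. Because the slab carries only a fraction $\approx\delta$ of the mass one gets $f_{Q,w_\delta}=\tfrac12+O(\delta)$, hence $|f-f_{Q,w_\delta}|\ge\tfrac14$ on each of the two heavy halves once $\delta$ is small, and therefore
\begin{equation*}
\frac1{w_\delta(Q)}\int_Q|f-f_{Q,w_\delta}|^{p^{*}}\,w_\delta\,dx\ \ge\ c>0
\end{equation*}
with $c$ independent of $\delta$; on the other hand
\begin{equation*}
\frac1{w_\delta(Q)}\int_Q|\nabla f|^{p}\,w_\delta\,dx=\frac{w_\delta(\{|x_1|<1/2\}\cap Q)}{w_\delta(Q)}\ \approx\ \delta .
\end{equation*}
Plugging both into \eqref{eq:P(p,p*)-local-avg-A1-beta} yields $c^{1/p^{*}}\le 4C\,[w_\delta]_{A_1}^{\beta}\,\delta^{1/p}\lesssim \delta^{\,1/p-\beta}$ (we may assume $\beta\ge0$, otherwise the inequality already fails). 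Since the left side is a fixed positive constant, letting $\delta\to0^{+}$ is impossible unless $1/p-\beta\le0$, i.e. $\beta\ge1/p$.

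The point that needs care, and the reason a single power singularity is not enough, is precisely the lower bound for the left-hand side. With only one singular hyperplane almost all of the $w_\delta$-mass concentrates where $f$ is essentially constant, so $f_{Q,w_\delta}$ is dragged to that value, $|f-f_{Q,w_\delta}|$ becomes of size $\approx\delta^{1/p^{*}}$ in the normalized $L^{p^{*}}(w_\delta/w_\delta(Q))$ norm, and one only recovers the weaker bound $\beta\ge \tfrac1p-\tfrac1{p^{*}}=\tfrac1n$. Splitting the singular mass into two far-apart pieces and letting $f$ interpolate from $0$ to $1$ across the light middle slab is exactly what keeps the left side $\gtrsim1$ while the gradient side costs only $\delta^{1/p}$; this is the heart of the argument. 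The remaining ingredients (the $A_1$ constant of $w_\delta$ and the asymptotics of the one-dimensional power integrals) are elementary.
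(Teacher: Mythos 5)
Your proof is correct, and it takes a genuinely different route from the paper. The paper tests \eqref{eq:P(p,p*)-local-avg-A1-beta} against the single radial weight $w(x)=|x|^{\delta-n}$ on $Q=(-1,1)^n$, with a bump $f$ equal to $1$ on $(-\varepsilon,\varepsilon)^n$ and supported in $(-2\varepsilon,2\varepsilon)^n$, and then invokes Lemma \ref{lem:vanish} to discard the weighted average $f_{Q,w}$. That application of the lemma, however, costs a factor of order $\bigl(w(Q)/w(E)\bigr)^{1/p^*}$, where $E=Q\setminus(-2\varepsilon,2\varepsilon)^n$ is the set on which $f$ vanishes; since the weight mass concentrates near the origin where $f\equiv1$, one has $w(E)/w(Q)\approx\delta$ for fixed $\varepsilon$, so that factor blows up like $\delta^{-1/p^*}$. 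A direct computation (keeping the average) shows the normalized left-hand side in the paper's example is $\approx_\varepsilon\delta^{1/p^*}$, not $\approx 1$, and, exactly as you observe in your closing paragraph, a single-singularity example therefore yields only $\beta\ge \tfrac1p-\tfrac1{p^*}=\tfrac1n$. Your two-singularity weight $w_\delta(x)=|x_1-1|^{-1+\delta}+|x_1+1|^{-1+\delta}$, together with the profile $f(x)=\varphi(x_1)$ that transitions from $1$ to $0$ across the light middle slab, is precisely what repairs this: the $w_\delta$-mass splits into two comparable pieces on which $f$ takes the values $1$ and $0$, so $f_{Q,w_\delta}\to\tfrac12$, the left-hand side stays $\gtrsim 1$ uniformly in $\delta$, and the right-hand side is $\lesssim[w_\delta]_{A_1}^{\beta}\,\delta^{1/p}\approx\delta^{1/p-\beta}$, which forces $\beta\ge\tfrac1p$ as $\delta\to0^+$. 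All of the bookkeeping you report ($[w_\delta]_{A_1}\approx\delta^{-1}$ via $M(u+v)\le Mu+Mv$ and the one-dimensional $A_1$ constant of $|t|^{-1+\delta}$; the mass-splitting estimates; $f_{Q,w_\delta}=\tfrac12+O(\delta)$; the gradient term $\approx\delta$) checks out. In short, your construction is not merely an alternative example: it supplies the essential feature --- a weight whose mass does not concentrate where $f$ is constant --- that the paper's own example is missing, and without which the stated conclusion $\beta\ge\tfrac1p$ does not follow.
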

\begin{proof}
The conclusion follows from the analysis of an specific example. Consider the cube in $\mathbb{R}^n$, $n\ge 2$,  defined as $Q=(-1,1)^n$. For $\delta\in (0,1)$, define the weight $w$ by the formula $w(x):=|x|^{\delta-n}$. Then $w$ satisfies that $[w]_{A_1}\sim \frac{1}{\delta}$ and $w(Q)\sim \frac{1}{\delta}$. Now fix $0<\varepsilon<1/2$ and define the set $E:= Q\setminus (-2\varepsilon,2\varepsilon)^n$. Define on $Q$ a  piecewise affine Lipschitz  function $f$ such that $f(x)=0$ for all $x\in E$ and $f(x)=1$ on $(-\varepsilon,\varepsilon)^n$.

Inequality \eqref{eq:P(p,p*)-local-avg-A1-beta} implies, by Lemma \ref{lem:vanish}, that the same inequality holds without the average:
\begin{equation*}
\left (\frac{1}{w(Q)}\int_Q|f|^{p*}w\right )^\frac{1}{p*}\le  [w]^\beta_{A_1}\ell(Q)\left (\frac{1}{w(Q)}\int_Q |\nabla f|^p w\right )^\frac{1}{p}.
\end{equation*} 

Now, a simple computation on the LHS shows that
\begin{equation*}
\left (\frac{1}{w(Q)}\int_Q|f|^{p*}w\right )^\frac{1}{p*}\gtrsim \left (\delta \int_{(-\varepsilon,\varepsilon)^n}|x|^{\delta-n}dx\right )^\frac{1}{p*}\gtrsim \varepsilon^{\frac{\delta}{p*}} 
\end{equation*} 
On the other hand, the RHS involving the gradient can be controlled as follows
\begin{eqnarray*}
[w]^\beta_{A_1}\ell(Q)\left (\frac{1}{w(Q)}\int_Q |\nabla f|^p w\right )^\frac{1}{p}
& \lesssim &
\delta^{-\beta+\frac{1}{p}}\left (\int_{(-2\varepsilon,2\varepsilon)^n\setminus (-\varepsilon,\varepsilon)^n}\frac{ |x|^{\delta-n}}{\varepsilon^p}dx\right )^\frac{1}{p}\\
& \lesssim &
\frac{\varepsilon^{\frac{\delta}{p}-1}}{\delta^{\beta-\frac{1}{p}}}
\end{eqnarray*} 
Therefore, inequality \eqref{eq:P(p,p*)-local-avg-A1-beta} would imply that 
\begin{equation*}
\varepsilon^{\frac{\delta}{p*}}  \lesssim \frac{\varepsilon^{\frac{\delta}{p}-1}}{\delta^{\beta-\frac{1}{p}}}
\end{equation*} 
For a fixed $\varepsilon$, this forces the condition $\beta\ge 1/p$.
\end{proof}

\section{Generalized Poincar\'e  inequalities with polynomials}\label{sec:higher}

The purpose of this  section  is to prove  Theorem  \ref{thm:higher-a(Q)} related to the analysis of Poincar\'e inequalities involving higher order derivatives.

A somewhat less-known result that we can use as a starting point, analogously to \eqref{eq:Poincare-L1}, is the following higher order inequality. There is a constant $C>0$ such that for any cube $Q$,

\begin{equation}
\frac{1}{ |Q| }\int_{Q} \absval{f(y)- \pi_Q(y)}\,dy \le C\,
\frac{ \ell(Q)^{m} }{  |Q| }
\int_{Q} \absval{\nabla ^{m}f} \,dy,
\label{higherorderpoincare}
\end{equation}
for some polynomial $\pi_Q$ depending on $f$ and $Q$ of degree at most
$m-1$, where $m$ is a positive integer. Here $\nabla ^{m}f=
\{D^{\sigma}f \}_{ |\sigma|=m} $ and $\absval{\nabla 
^{m}f} =  \sum \absval{ D^{\sigma}f }$. Estimates of this type can be
found for example in \cite{Bo}.

Note that for this kind of inequalities involving higher order derivatives, we cannot make use of the truncation method as in Proposition \ref{pro:P(p,p)-I1vsM} or Proposition \ref{pro:two-weights}. We present in this section a variation of Theorem \ref{thm:Lp(w)-a(Q)-clean} adapted to the oscillation with respect to ``optimal polynomials''. 

Before we present the main result of this section, a few words on optimal polynomials are needed. We borrow the following definitions and properties from \cite{FPW98} which are based on \cite{DS}. Given a cube $Q\subset \mathbb{R}^n$ and an integer $m\ge 0$, we consider the space $\mathcal{P}_m$ of polynomials of degree at most $m$ in $n$ variables endowed with the inner product given by 
$$
<f,g>_Q:=\avgint_Q fg dx.
$$
There is an orthonormal basis with respect to this inner product that we will denote by $\{\phi_\alpha\}$, being $\alpha=(\alpha_1,\dots,\alpha_n)$ a multiindex of non negative integers such that $|\alpha|=\alpha_1+\cdots+\alpha_n\le m$. An important feature is that 
\begin{equation}\label{eq:equiv-norm-Pm}
\|\phi_r\|_{L^\infty}\le C\left (\frac{1}{|Q|}\int_Q |\phi_r|^2 dx\right) ^{1/2}=C,
\end{equation}
since the space $\mathcal{P}_m$ is finite dimensional and therefore all norms are equivalent. Let $P_Q$ the projection defined by the formula
$$
P_Q(f)=\sum_r \left (\frac{1}{|Q|}\int_Q f\phi_r dx\right )\phi_r.
$$
We clearly have from \eqref{eq:equiv-norm-Pm} that 
\begin{equation}\label{eq:Linfty-PQ}
\|P_Qf\|_{L^\infty}\le N C^2 \avgint_Q |f|,
\end{equation}
where $N$ depends on $m$. Moreover, as it is the case when $m=0$ and the projection is over the constants, we have the following optimality property:
\begin{equation*}
\inf_{\pi\in\mathcal{P}_m}\left (\avgint_Q |f-\pi|^p\right) ^{1/p} \approx \left (\avgint_Q|f-P_Qf|^p \right )^{1/p}.
\end{equation*}

We now proceed to present the proof of the announced result.

\begin{proof}[Proof of Theorem \ref{thm:higher-a(Q)}] 
The proof relies on a Calder\'on-Zygmund decomposition adapted to the function $\frac{|f-P_Q(f)|}{a(Q)}$ similar to the technique used in Theorem \ref{thm:Lp(w)-a(Q)-clean}. The difference here is that we do not decompose the function into the ``good'' and ``bad'' parts $g_Q$ and $b_Q$. We only work with the decomposition of the level set. 

More precisely, for a given $L>1$, we decompose the cube $Q$ into a family of dyadic subcubes maximal with respect to the inclusion satisfying inequalities similar to \eqref{eq:CZ1}, \eqref{eq:CZ2} and \eqref{eq:CZ3}, namely

\begin{equation}\label{eq:Poly-CZ1}
L  < \frac{1}{|Q_{j}|  }\int_{Q_{j}} \frac{|f-P_{Q}f|}{a(Q)} \, dy. 
\end{equation}
Then, if $P$ is dyadic with $P \supset Q_j$ 
\begin{equation}\label{eq:Poly-CZ2}
\frac{ 1 }{|P|  }
\int_{P} \frac{|f-P_{Q}f|}{a(Q)} \, dy   \leq L
\end{equation}
and hence 
\begin{equation}\label{eq:Poly-CZ3}
L  < \frac{ 1 }{|Q_{j}|  }
\int_{Q_{j}} \frac{ |f-P_{Q}f| }{a(Q)} \, dy \leq L\,2^{n}
\end{equation}
for each integer $j$.  Also note that 
$$  \left \{x\in Q: M_Q^d\left ( \frac{|f-P_{Q}f|}{a(Q)}\chi_{  Q }\right )(x) > L   \right \} = \bigcup_{j}Q_j=:\Omega_L.
$$
Then, by the Lebesgue differentiation theorem it follows that
$$\frac{|f(x)-P_{Q}f|}{a(Q)} \leq L   \qquad              a.e. \ x \notin \bigcup_{j}Q_j.  
$$
As before, we conclude that the collection of maximal cubes is $L$-small according to Definition \ref{def:L-small}, since we have that
\begin{equation*}\label{eq:Poly-CZ4a}
|\bigcup_{j}Q_j | < \frac{|Q|}{L}.
\end{equation*}
We start by splitting the integral 

\begin{eqnarray*}
\left( \frac{1}{ w(Q)  } \int_{ Q }   \frac{ |f -P_{Q}f|^p}{ a(Q)^p}  wdx \right)^{\frac{1}{p}} & \le & \left(
\frac{1}{ w(Q)  } \int_{Q \setminus  \Omega_L} \frac{ |f -P_{Q}f|^{p}}  { a(Q)^p  }      wdx  \right)^{\frac{1}{p}}
\\
&  & +\left(
\frac{1}{ w(Q)  } \int_{ \Omega_L} \frac{ |f -P_{Q}f|^{p}}  { a(Q)^p  }      wdx  \right)^{\frac{1}{p}}\\
&=& I+II
\end{eqnarray*}
By the Lebesgue differentiation theorem, we have that $I \leq L$. For $II$, we have that 
\begin{equation*}
\int_{\Omega_L}  \frac{|f -P_{Q}f|^{p }}{a(Q)^p} wdx = \sum_j   	\int_{Q_j} \frac{|f -P_{Q}f|^{p }}{a(Q)^p} wdx.
\end{equation*}
For each $j$, let us split again the integral by introducing the projection over the smaller cube $Q_j$ to obtain
\begin{equation*}
\int_{Q_j}  |f -P_{Q}f|^{p } wdx 
\le 2^{p-1}\left (    \int_{Q_j}  |f -P_{Q_j}f|^{p } wdx +    \int_{Q_j}  |P_{Q_j}f-P_Qf|^{p } wdx\right )
\end{equation*}

Now, since $P_{Q_j}$ is a projection, we have that $P_{Q_j}(P_Q f)=P_Qf$ and therefore we can compute for any $x\in Q_j$ that 
\begin{equation*}
|P_{Q_j}f-P_Qf|(x)\le |P_{Q_j}(f-P_Qf)|(x)\le C^2 \avgint_{Q_j}|f-P_Qf|\le C^2 2^nLa(Q)
\end{equation*} 
 by \eqref{eq:Linfty-PQ} and \eqref{eq:Poly-CZ3}. Therefore,
 $$
 \int_{Q_j}  |P_{Q_j}f-P_Qf|^{p } wdx\le C^{2p}2^{np}L^p a(Q)^p w(Q_j).
 $$
We define again the quantity \footnote{ Remark: Here we omit the details of proving that $X$ is finite. The interested reader may check that the arguments from the proof of Theorem \ref{thm:Lp(w)-a(Q)-clean} can be used here as well.  }
 $$
X=\sup_Q \left (\frac{1}{w(Q)}\int_{Q}\left |\frac{f-P_{Q}f}{a(Q)}\right |^p \, w dx \right )^{1/p}.
$$

After collecting all previous estimates and using the above definition, we obtain
\begin{eqnarray*}
\int_{\Omega_L}  \frac{|f -P_{Q}f|^{p }}{a(Q)^p} wdx  & \le  &2^{p-1}\sum_j \frac{a(Q_j)^pw(Q_j)}{a(Q)^pw(Q_j)}  	\int_{Q_j} \frac{|f -P_{Q_j}f|^{p }}{a(Q_j)^p} wdx  \\
&& + 2^{p-1+pn}C^{2p}L^p \sum_j w(Q_j)\\
& \le & 2^{p-1}X^p \frac{\sum_j   a(Q_j)^{p} \,w(Q_j)  }{ a(Q)^{p} } 
  +2^{p-1+pn}C^{2p}L^pw(Q)
\end{eqnarray*}
Therefore, using the smallness preservation of the functional $a$, we obtain
\begin{eqnarray*}
\left (\frac{1}{w(Q)}\int_{\Omega_L}  \frac{|f -P_{Q}f|^{p }}{a(Q)^{p}}  wdx \right )^\frac{1}{p}& \le & 2^{\frac{1}{p'}}C^2X\left( \frac{\sum_j   a(Q_j)^{p} \,w(Q_j)  }{ a(Q)^{p}   w(Q)  } 
\right)^{\frac{1}{p}}\\
&& +2^{\frac{1}{p'}+n}C^2L\\
&\le & 2^{\frac{1}{p'}}X \frac{\|a\|}{L^{1/s}}+2^{\frac{1}{p'}+n}C^2L.
\end{eqnarray*}
We can compute the integral over the whole cube $Q$:
\begin{eqnarray*}
\left( \frac{1}{ w(Q)  } \int_{ Q }   \frac{ |f -P_{Q}f|^p}{ a(Q)^p}  wdx \right)^{\frac{1}{p}} & \le & L+2^{\frac{1}{p'}}X \frac{\|a\|}{L^{1/s}}+2^{\frac{1}{p'}+n}C^2L\\
& \le & X \frac{2^{\frac{1}{p'}}\|a\|}{L^{1/s}}+(2^{\frac{1}{p'}+n}C^2+1)L.
\end{eqnarray*}
Now we proceed as in the proof of Theorem \ref{thm:Lp(w)-a(Q)-clean}. Taking the supremum, we obtain that
$$ 
X \left (1-\frac{2^{\frac{1}{p'}}\|a\|}{L^{1/s}}\right ) \le (2^{\frac{1}{p'}+n}C^2+1)L.
$$
Choosing $\frac{2^{\frac{1}{p'}}\|a\|}{L^{1/s}}=\frac{1}{(2e)^{1/s}}$ or equivalently $L=2e2^\frac{s}{p'}\|a\|^s$ we conclude that
\begin{eqnarray*}
X & \le & \left ((2e)^{1/s}\right )'(2^{\frac{1}{p'}+n}C^2+1)2e2^\frac{s}{p'}\|a\|\\
& \le & C_n s 2^\frac{s+1}{p'}\|a\|^s.
\end{eqnarray*}

\end{proof}

\section{Poincar\'e inequalities and  Lorentz spaces}\label{sec:Lorentz}

In this section we will present the proof of Corollary \ref{cor:Lp1(w)-a(Q)-clean} as another application of Theorem \ref{thm:Lp(w)-a(Q)-clean}  within the scale of Lorentz spaces where
the $A_{p,1}$ class of weights (see Section \ref{sec:prelim} for the precise definition)  plays a role. Indeed, there is an inequality for this class of weights very similar to \eqref{eq:prop-Ap-function} at the scale of Lorentz norms. In fact, we claim that  for a constant depending on $p$

\begin{equation}\label{eq:prop-Ap1-function}
\avgint_Q  g dx \leq c\,[w]_{A_{p,1}}^{\frac1p} \,\Big\| g \Big\|_{L^{p, 1}(Q, \frac{wdx}{w(Q)}  ) }\qquad  g\geq 0.
\end{equation}
To prove the claim we use Holder's inequality in the context of Lorentz spaces together with the $A_{p,1}$ condition \eqref{Ap,1}
\begin{eqnarray*}
\avgint_Q  g w^{-1}\,wdx & \leq &   c \Big\| g \Big\|_{L^{p, 1}(Q, \frac{wdx}{|Q|} ) }  \, \Big\| \frac{1}{w}\Big\|_{L^{p',\infty}(Q, \frac{wdx}{|Q|} ) }\\
& = &  c\Big\| g \Big\|_{L^{p, 1}(Q, \frac{wdx}{|Q|} ) }  \, \Big\| \frac{1}{w}\Big\|_{L^{p',\infty}(Q, \frac{wdx}{|Q|} ) }
\left(  \avgint_Q  w \right)^{\frac1p}
\left(  \frac{|Q|}{w(Q)} \right)^{\frac1p}\\
& \leq & c[w]_{A_{p,1}}^{\frac1p} \, \,\Big\| g \Big\|_{L^{p, 1}(Q, \frac{wdx}{w(Q)}  ) }
\end{eqnarray*}

Now, for the proof of Corollary  \ref{cor:Lp1(w)-a(Q)-clean} consider the following $L^1$ generalized unweighted Poincar\'e inequality (for instance with the gradient):
\begin{equation} \label{eq:PI-Ap1}
\avgint_Q|f(x)-f_Q|dx\le c \ell(Q) \avgint_Q  g dx, 
\end{equation}
then by inequality \eqref{eq:prop-Ap1-function}, we have that
\begin{eqnarray*}
\avgint_Q|f(x)-f_Q|dx & \leq & c\, \ell(Q) [w]_{A_{p,1}}^{\frac1p} \, \,\Big\| g \Big\|_{L^{p, 1}(Q, \frac{wdx}{w(Q)}  ) }.\\
\end{eqnarray*}
If we let 
$$a(Q):= \ell(Q)\,   \Big\| g \Big\|_{L^{p, 1}(Q, \frac{wdx}{w(Q)}  ) }
$$
%
the proof of Corollary \ref{cor:Lp1(w)-a(Q)-clean} will follow from the next lemma.

\begin{lemma}\label{lem:L-small-Ap1} Let $w$ be a weight and  let $1\leq p<n$. 
Then $a\in SD_p^{n}(w)$.
\end{lemma}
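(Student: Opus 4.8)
The plan is to deduce the $SD_p^n(w)$ bound for the Lorentz functional $a$ from Lemma~\ref{lem:L-small}, applied separately at each level set of $g$. First I would rewrite the quantities appearing in \eqref{eq:SDp}: by the layer‑cake formula for the $L^{p,1}$ norm and the identity $\ell(Q)^p=|Q|^{p/n}$,
\[
a(Q)^p\,w(Q)=\ell(Q)^p\Big(\int_0^\infty\Big(\tfrac{1}{w(Q)}\,w(Q\cap\{g>t\})\Big)^{1/p}dt\Big)^p w(Q)=|Q|^{p/n}\Big(\int_0^\infty w(Q\cap\{g>t\})^{1/p}\,dt\Big)^p ,
\]
and the same formula with $Q$ replaced by any subcube $Q_i$. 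Thus, given a cube $Q$ and a family $\{Q_i\}\in S(L)$, proving $a\in SD_p^n(w)$ with $\|a\|\le1$ amounts to the estimate
\[
\Big(\sum_i |Q_i|^{p/n}\Big(\int_0^\infty w(Q_i\cap\{g>t\})^{1/p}dt\Big)^p\Big)^{1/p}\le L^{-1/n}\,|Q|^{1/n}\int_0^\infty w(Q\cap\{g>t\})^{1/p}\,dt .
\]

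The next step is Minkowski's integral inequality (legitimate because $p\ge1$), used to interchange the $\ell^p$‑sum over $i$ with the $t$‑integral, so that the left‑hand side above is bounded by
\[
\int_0^\infty\Big(\sum_i |Q_i|^{p/n}\,w(Q_i\cap\{g>t\})\Big)^{1/p}dt .
\]
Now I would freeze $t$. The set function $\mu_t(S):=\int_{S\cap\{g>t\}}w\,dx$ is a finite measure (indeed $\mu_t(Q)\le w(Q)$), so the inner sum is exactly of the form treated in Lemma~\ref{lem:L-small} with the measure $\mu_t$, exponent $p$, and $\alpha=1$; since $p<n$ we are in the branch $p\alpha/n<1$ there, and Hölder's inequality with exponent $n/p$ together with $\sum_i|Q_i|\le |Q|/L$ and the disjointness of the $Q_i$ yields
\[
\sum_i |Q_i|^{p/n}\,\mu_t(Q_i)\le\Big(\frac{|Q|}{L}\Big)^{p/n}\mu_t(Q)=L^{-p/n}\,|Q|^{p/n}\,w(Q\cap\{g>t\}) .
\]
Taking $p$‑th roots, integrating in $t$, and pulling the constant $L^{-1/n}|Q|^{1/n}$ out of the integral recovers precisely the displayed estimate; raising it to the $p$‑th power gives $\sum_i a(Q_i)^p w(Q_i)\le L^{-p/n}a(Q)^p w(Q)$, i.e.\ $a\in SD_p^n(w)$. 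Corollary~\ref{cor:Lp1(w)-a(Q)-clean} then follows by combining this with \eqref{eq:PI-Ap1}, \eqref{eq:prop-Ap1-function} and Theorem~\ref{thm:Lp(w)-a(Q)-clean} (applied with $s=n$).

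I expect the only genuine subtlety to be the interchange of the sum and the $t$‑integral: one should resist the temptation to bound $\sum_i\|g\|_{L^{p,1}(Q_i,w\,dx)}$ by a multiple of $\|g\|_{L^{p,1}(Q,w\,dx)}$, since for $p>1$ the quantity $\sum_i w(E_i)^{1/p}$ over disjoint sets $E_i$ can be much larger than $w(\bigcup_i E_i)^{1/p}$. It is exactly the extra geometric factor $|Q_i|^{p/n}$ arising from $\ell(Q_i)^p$, made small by the hypothesis $\{Q_i\}\in S(L)$, that compensates, and the clean way to exploit it is to localize at a fixed level $t$, where the relevant object becomes an honest measure and Lemma~\ref{lem:L-small} applies verbatim; everything else is routine.
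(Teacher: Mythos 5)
Your proof is correct and yields the same constant ($\|a\|\le 1$, exponent $s=n$). The ingredients coincide with the paper's — Minkowski's integral inequality together with H\"older's inequality at exponent $n/p$ plus convexity — but you apply them in the opposite order. You invoke Minkowski first, to push the $\ell^p$-sum over $i$ inside the layer-cake $t$-integral, and then apply Lemma~\ref{lem:L-small} at each fixed level $t$ to the measure $d\mu_t=w\chi_{\{g>t\}}\,dx$ (this is legitimate as a citation of the statement, not just the proof: for the functional $\ell(Q)(\mu_t(Q)/w(Q))^{1/p}$ the quantity $a(Q_i)^p w(Q_i)$ equals $|Q_i|^{p/n}\mu_t(Q_i)$, so $w$ cancels from both sides of the $SD$ inequality). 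The paper instead applies H\"older and convexity on the $i$-sum first, detaching the small geometric factor $(\sum_i|Q_i|)^{p/n}$, and then finishes by quoting the Lorentz-space inequality $\sum_i\|g\|^p_{L^{p,1}(Q_i,w\,dx)}\le\|g\|^p_{L^{p,1}(Q,w\,dx)}$ from Chung--Hunt--Kurtz, which is itself a disguised Minkowski. Your reorganization has the modest advantage of reusing Lemma~\ref{lem:L-small} verbatim and avoiding the external citation, at the cost of unfolding the Lorentz norm into its distribution-function form from the outset; the paper's version stays at the level of Lorentz norms throughout. Your closing remark about the pitfall of trying to bound $\sum_i\|g\|_{L^{p,1}(Q_i,w\,dx)}$ (without the $p$-th power) is a fair warning, though it is worth noting that the inequality the paper actually uses, with the $p$-th powers, is true and is precisely the Minkowski step.
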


\begin{proof}

First observe that 
$$
\Big\| g \Big\|_{L^{p, 1}(Q, \frac{wdx}{w(Q)}  ) }
=
\Big\| g \Big\|_{L^{p, 1}(Q ) } \, \left(  \frac{1}{w(Q)} \right)^{\frac1p}.
$$
Now, let $\{Q_i\}\in S(L)$.  Then by Holder's inequality and convexity
\begin{eqnarray*}
\sum_{i}a(Q_i)^pw(Q_i) & = & \sum_{i}  |Q_i|^{p/n}   \Big\| g \Big\|^p_{L^{p, 1}(Q_i) }       \\
&\leq & \left (\sum_i |Q_i|\right )^{p/n} \left (\sum_i  \Big\| g \Big\|_{L^{p, 1}(Q_i) }^{p(n/p)'}\right )^\frac{1}{(n/p)'}\\
&\leq & \left (\sum_i |Q_i|\right )^{p/n} \left (\sum_i  \Big\| g \Big\|_{L^{p, 1}(Q_i) }^{p}\right )\\
&\le & \left (\frac{|Q|}{L}\right )^{p/n}  \Big\| g \Big\|_{L^{p, 1}(Q) }^{p}. 
\end{eqnarray*}
where the last estimate follows from the following known lemma which is a consequence of Minkowsky inequality (see \cite[Lemma 2.5]{CHK}  for a more general version). 
\begin{lemma}
$$
\sum_i  \Big\| g \Big\|_{L^{p, 1}(Q_i, wdx ) }^{p} \leq  \Big\| g \Big\|_{L^{p, 1}(Q, wdx ) }^{p}.
$$
\end{lemma}
\end{proof}

Higher order derivative estimates and Sobolev-Poincare estimates can be considered as well but we will not pursue in this direction.

\section{There is no Poincar\'e inequality for \emph{all} \texorpdfstring{$A_\infty$}{Ainfty} weights}\label{sec:noAinfinity}

As mentioned in the introduction,  we prove in this section a negative result which is intimately related to the failure of the Poincar\'e inequality $(p,p)$ 
when $p<1$. We will show that there is no weighted Poincar\'e $(p,p)$ inequality ($p\geq 1$) valid for the class $RH_\infty$ and hence for the class $A_\infty$ since $RH_\infty \subset A_\infty$.  We recall that a weight $w$ belongs to the the class $RH_\infty$ if there is a constant $c$ such that
$$
\sup_Q w \leq c\ \avgint_{Q} w. 
$$
This definition means that $w$ satisfies a reverse H\"older's inequality for any exponent and hence $RH_\infty \subset A_\infty$. We will use the following known lemma.
\begin{lemma}\label{RHinfity}
Let $\lambda>0$ and let $\mu$ be a measure such that $M\mu$ is finite almost everywhere,  then $(M\mu)^{-\lambda} \in RH_{\infty}$ with constant independent of $\mu$. 
\end{lemma}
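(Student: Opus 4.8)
The plan is to unwind the $RH_\infty$ condition into a lower bound for an average. Fix a cube $Q$ and assume $\mu\not\equiv 0$ (otherwise the statement is vacuous). Write $w=(M\mu)^{-\lambda}$ and set $m:=\essinf_{Q}M\mu$, which is finite and strictly positive since $M\mu$ is finite a.e.\ and $\mu\neq 0$. Because $t\mapsto t^{-\lambda}$ is continuous and decreasing on $(0,\infty)$ we have $\esssup_{Q}w=m^{-\lambda}$, so the desired bound $\esssup_{Q}w\le c\,\avgint_{Q}w$ is \emph{equivalent} to
\[
\avgint_{Q}(M\mu)^{-\lambda}\,dx\;\ge\;c\,m^{-\lambda},
\]
for some $c=c(n,\lambda)>0$; this is morally the opposite of what a weight like $|x|^{-a}$ does, and the point is that $M\mu$, being essentially a tail average, cannot be much larger than its infimum on most of $Q$. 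I would prove the displayed inequality by exhibiting a set $G\subset Q$ with $|G|\ge|Q|/2$ on which $M\mu\le C_n\,m$; then $\avgint_Q(M\mu)^{-\lambda}\ge\tfrac12(C_n m)^{-\lambda}$, which gives the claim with a constant depending only on $n$ and $\lambda$, hence independent of $\mu$ and of $Q$.

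The construction of $G$ rests on two routine localization estimates for the maximal function. First, splitting $\mu=\mu\chi_{3Q}+\mu\chi_{\R^n\setminus 3Q}$ and using that every $x\in Q$ has distance at least $\ell(Q)$ from $\R^n\setminus 3Q$, any ball (or cube) through $x$ meeting $\R^n\setminus 3Q$ has radius at least $\tfrac12\ell(Q)$; since $|x-y|\le\sqrt n\,\ell(Q)$ for $y\in Q$, such a ball is contained in a fixed dimensional dilate of the ball of the same radius centered at $y$, which yields
\[
M\mu(x)\le M(\mu\chi_{3Q})(x)+C_n\,M\mu(y)\qquad (x,y\in Q).
\]
Taking $\essinf$ over $y\in Q$ gives $M\mu(x)\le M(\mu\chi_{3Q})(x)+C_n m$ for every $x\in Q$. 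Second, for every $y\in Q$ the ball centered at $y$ of radius equal to the diameter of $3Q$ contains $3Q$, so $\mu(3Q)\le C_n|Q|\,M\mu(y)$, and again taking $\essinf$ over $y$ gives $\mu(3Q)\le C_n|Q|\,m$.

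Now fix a large dimensional parameter $\Lambda$ and set $E:=\{x\in Q:\ M(\mu\chi_{3Q})(x)>\Lambda m\}$. By the weak $(1,1)$ inequality for $M$ and the second estimate,
\[
|E|\le\frac{c_n}{\Lambda m}\,\mu(3Q)\le\frac{c_nC_n}{\Lambda}\,|Q|,
\]
so the choice $\Lambda:=2c_nC_n$ forces $|E|\le|Q|/2$. Put $G:=Q\setminus E$; then $|G|\ge|Q|/2$, and for $x\in G$ the first estimate gives $M\mu(x)\le\Lambda m+C_n m=(\Lambda+C_n)m$, which is exactly the property required of $G$. This establishes the displayed lower bound with $c=\tfrac12(\Lambda+C_n)^{-\lambda}$ and finishes the proof.

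The only genuine content is the tail comparison $M\mu(x)\le M(\mu\chi_{3Q})(x)+C_n M\mu(y)$, a standard but slightly fiddly ball-inclusion argument; everything else is the weak $(1,1)$ bound plus bookkeeping of dimensional constants, the one point requiring care being to take $\Lambda$ large enough that the bad set $E$ occupies at most half of $Q$. (Since on $\R^n$ the centered and uncentered maximal functions, and the ball and cube versions, are all comparable, it is immaterial which one $M$ denotes here.)
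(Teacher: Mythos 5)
Your proof is correct, and it takes a genuinely different route from the paper's. The paper's argument is indirect: it cites the Coifman--Rochberg theorem that $(M\mu)^{\delta}\in A_1$ for $0<\delta<1$ with a universal constant, rewrites $\sup_Q (M\mu)^{-\lambda}=\bigl(\inf_Q(M\mu)^{\delta}\bigr)^{-\lambda/\delta}$, invokes the $A_1$ bound to replace $\inf_Q(M\mu)^{\delta}$ by $c_\delta^{-1}\avgint_Q (M\mu)^{\delta}$, then uses the Cauchy--Schwarz inequality $1\le\bigl(\avgint_Q g\bigr)\bigl(\avgint_Q g^{-1}\bigr)$ and Jensen (choosing $\delta<\min(1,\lambda)$ so $t\mapsto t^{\lambda/\delta}$ is convex) to pass to $\avgint_Q(M\mu)^{-\lambda}$. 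Your argument, by contrast, is self-contained: you reduce $RH_\infty$ to the lower bound $\avgint_Q(M\mu)^{-\lambda}\gtrsim\bigl(\essinf_Q M\mu\bigr)^{-\lambda}$, and establish it directly by splitting $M\mu$ over $Q$ into a local part $M(\mu\chi_{3Q})$ and a tail uniformly comparable to $\essinf_Q M\mu$, then controlling the local part on most of $Q$ via the weak $(1,1)$ bound and $\mu(3Q)\lesssim |Q|\,\essinf_Q M\mu$. In effect you have inlined the standard proof of the Coifman--Rochberg $A_1$ fact instead of quoting it; the payoff is that your argument needs no external input beyond weak $(1,1)$, while the paper's is shorter once that fact is taken for granted. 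One could quibble that you should make the convention $0^{-\lambda}=+\infty$ explicit (the paper does not either) and note that $\essinf_Q M\mu\in(0,\infty)$ since $\mu\not\equiv 0$ and $M\mu<\infty$ a.e., but you address both points; the proof is complete.
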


For the proof we will use that the following well known fact, if $0<\delta<1$,  $(M\mu)^{\delta} \in A_1$ with constant independent of $\mu$ (see \cite{GCRdF}) . Now, 
\begin{eqnarray*}
\sup_Q \, (M\mu)^{-\lambda} & = & \left(\sup_Q\, (M\mu)^{-\delta}\right)^{\frac{\lambda}{\delta}}
=  \left( \frac{1}{ \inf_Q\, (M\mu)^{\delta} } \right)^{\frac{\lambda}{\delta}}\\
& \leq  &\left( \frac{c_{\delta}}{ 
\avgint_{Q} (M\mu)^{\delta}\, dx  } \right)^{\frac{\lambda}{\delta}}
\end{eqnarray*}
if we choose that $0<\delta<1$ using  that $(M\mu)^{\delta} \in A_1$. Now, since 
$$
1\leq  \avgint_{Q} w  \avgint_{Q} w^{-1}
$$
we continue with
$$
\sup_Q \, (M\mu)^{-\lambda} \leq \left( c_{\delta}  \avgint_{Q} (M\mu)^{-\delta}\, dx   \right)^{\frac{\lambda}{\delta}}
\leq c_{\delta,\lambda}  \avgint_{Q} (M\mu)^{-\lambda}\, dx,$$
by Jensen's inequality choosing $\delta$ such that $\lambda>\delta$ if necessary. This finishes the proof that $(M\mu)^{-\lambda} \in RH_{\infty}$ with universal constant.

\begin{proof}[Proof of Theorem \ref{thm:RHinfty}]

Consider some fixed $p_0\in (0,1)$. We will use the modified maximal operator 
\begin{equation*}
M_\varepsilon(f):=M(|f|^\varepsilon)^{1/\varepsilon}, \qquad \varepsilon>0.
\end{equation*}
Then we have, for some $\varepsilon,\alpha>0$ to be chosen later, that
\begin{eqnarray*}
\left (\int_Q|f-a|^{p_0}\right )^\frac{1}{p_0} & = & \left (\int_Q|f-a|^{p_0}M_\varepsilon(\chi_Q|\nabla f|)^{-\alpha p_0}M_\varepsilon(\chi_Q|\nabla f|)^{\alpha p_0}\right )^\frac{1}{p_0}\\
&\le & I.II,
\end{eqnarray*}
by using H\"older inequality with the pair $q=\frac{p}{p_0}>1$ and $q'=\frac{p}{p-p_0}$, where 
\begin{equation*}
I=\left (\int_Q|f-a|^{p}M_\varepsilon(\chi_Q|\nabla f|)^{-\alpha p}\right )^{1/p}
\end{equation*}
and 
\begin{equation*}
II=\left (\int_Q M_\varepsilon(\chi_Q|\nabla f|)^{\alpha p_0\left( \frac{p}{p_0}\right )'}\right )^{\frac{1}{p_0\left( \frac{p}{p_0}\right )'}}.
\end{equation*}
Now, by the Lemma above we have that $M_\varepsilon(\chi_Q|\nabla f|)^{-\alpha p_0}$ belongs to $RH_\infty$ with constant independent of $|\nabla f|$.   Then we can use the hypothesis to control the first factor above by
\begin{eqnarray*}
I & \le & C\ell(Q)\left (\int_Q|\nabla f|^{p}M_\varepsilon(\chi_Q|\nabla f|)^{-\alpha p}\right )^{1/p}\\
& \le &C\ell(Q) \left (\int_Q|\nabla f|^{p-\alpha p}\right )^{1/p}\\
& = & C\ell(Q)\left (\int_Q|\nabla f|^{p_0}\right )^{1/p}
\end{eqnarray*}
choosing $\alpha=\frac{p-p_0}{p}>0$.

For the second factor $II$, note that by the choice of $\alpha$, we have that
$\alpha \left (\frac{p}{p_0}\right )'=\frac{p-p_0}{p} \frac{p}{p-p_0}=1$.
If we consider $\varepsilon=\frac{p_0}{2}\in (0,1)$  then we have that 
\begin{equation*}
II = 
\left (\int_Q M(\chi_Q|\nabla f|^\frac{p_0}{2})^2\right )^{\frac{1}{p_0\left( \frac{p}{p_0}\right )'}}
\leq c\, \left (\int_Q |\nabla f|^{p_0}\right )^{\frac{p-p_0}{p_0p}}
\end{equation*}
by the boundedness of the maximal operator. Therefore, collecting estimates and noting that $\frac{1}{p}+\frac{p-p_0}{p_0p}=\frac{1}{p_0}$, we obtain
\begin{equation*}
\left (\int_Q|f-a|^{p_0}\right )^{1/p_0}\le C\ell(Q) \left (\int_Q |\nabla f|^{p_0}\right )^{\frac{1}{p_0}}.
\end{equation*}

\end{proof}

\section{Appendix:  A general two weight Poincar\'e inequality for the weak norm and representation formula} \label{sec:appendix}

It is very well known that there is a close connection between Poincar\'e inequalities and fractional integral operators. In particular, we will consider the fractional integral operators defined for any $0<\alpha <n $ by
\begin{equation}
I_\alpha(g)(x):=\int_{\mathbb{R}^n} \frac{g(y)}{|x-y|^{n-\alpha}}dy
\end{equation}

We will use the following lemma regarding the normability of the space $L^{p,\infty}$ from \cite{GrafakosCF}. Let $\mu$ be any Radon measure on $\mathbb{R}^n$. The weak ``norm'' with respect to $\mu$  is defined by
\begin{equation*}
\|f\|_{L^{p,\infty}_{\mu}}:=\sup_{\lambda>0}\lambda \mu(\{x\in \mathbb{R}^n: |f(x)|>\lambda\})^\frac{1}{p}.
\end{equation*}
The space $L^{p,\infty}_{\mu}$ is the set of functions such that the quantity above is finite.

\begin{lemma}\label{lem:nomability-WeakLp}
Let $p>1$. Define, for any $f$ in the weak space $L_{\mu}^{p,\infty}$, the norm
\begin{equation}\label{eq:nomability-WeakLp}
		\vertiii{f}_{L_{\mu}^{p,\infty}}:=\sup_{0<\mu(E)<\infty} \mu(E)^{\frac{1}{p}-1}\int_E |f(x)|\ d\mu
\end{equation}
Then we have that
\begin{equation}
\|f\|_{L_{\mu}^{p,\infty}}\le \vertiii{f}_{L_{\mu}^{p,\infty}}\le p'\|f\|_{L_{\mu}^{p,\infty}}
\end{equation}
\end{lemma}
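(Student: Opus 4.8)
The plan is to establish the two inequalities separately, both being immediate consequences of the layer-cake (distribution function) representation of $\int_E|f|\,d\mu$ together with, respectively, a trivial pointwise bound and the weak-type bound defining $\|f\|_{L^{p,\infty}_\mu}$.

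For the lower bound $\|f\|_{L_{\mu}^{p,\infty}}\le \vertiii{f}_{L_{\mu}^{p,\infty}}$, I would fix $\lambda>0$ and set $E_\lambda=\{x\in\mathbb{R}^n:|f(x)|>\lambda\}$. On any measurable $E\subset E_\lambda$ with $0<\mu(E)<\infty$ one has $\int_E|f|\,d\mu\ge\lambda\,\mu(E)$, hence $\mu(E)^{\frac1p-1}\int_E|f|\,d\mu\ge\lambda\,\mu(E)^{1/p}$. Since a Radon measure on $\mathbb{R}^n$ is $\sigma$-finite, I can exhaust $E_\lambda$ by subsets of finite measure, so the supremum defining $\vertiii{f}_{L_{\mu}^{p,\infty}}$ is at least $\lambda\,\mu(E_\lambda)^{1/p}$. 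Taking the supremum over $\lambda>0$ yields $\|f\|_{L_{\mu}^{p,\infty}}\le\vertiii{f}_{L_{\mu}^{p,\infty}}$.

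For the upper bound, put $A=\|f\|_{L_{\mu}^{p,\infty}}$ and assume $A<\infty$ (otherwise there is nothing to prove). Fix $E$ with $0<\mu(E)<\infty$ and write $\int_E|f|\,d\mu=\int_0^\infty\mu\big(E\cap\{|f|>\lambda\}\big)\,d\lambda$. Estimating the integrand by $\min\{\mu(E),\,A^p\lambda^{-p}\}$ — the first bound being trivial, the second being the weak-type estimate — and splitting the $\lambda$-integral at the balance point $\lambda_0=A\,\mu(E)^{-1/p}$, the two resulting elementary integrals are $\lambda_0\mu(E)=A\,\mu(E)^{1/p'}$ and $\int_{\lambda_0}^\infty A^p\lambda^{-p}\,d\lambda=\tfrac{1}{p-1}A\,\mu(E)^{1/p'}$. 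Adding these gives $\int_E|f|\,d\mu\le\big(1+\tfrac{1}{p-1}\big)A\,\mu(E)^{1/p'}=p'\,A\,\mu(E)^{1/p'}$. Multiplying by $\mu(E)^{\frac1p-1}=\mu(E)^{-1/p'}$ and taking the supremum over all such $E$ gives $\vertiii{f}_{L_{\mu}^{p,\infty}}\le p'\,A$.

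I do not expect any genuine obstacle here; the argument is entirely classical. The only points deserving a word of care are the $\sigma$-finiteness remark used to realize or exhaust the level set $E_\lambda$ in the lower bound when $\mu(E_\lambda)$ may be infinite, and the bookkeeping of the exponent identity $\tfrac1p-1=-\tfrac1{p'}$ in the final normalization step.
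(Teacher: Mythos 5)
Your proof is correct. Note that the paper does not actually supply an argument for this lemma—it simply cites Grafakos's \emph{Classical Fourier Analysis}—and the layer-cake decomposition with the split at $\lambda_0=A\,\mu(E)^{-1/p}$ that you give is precisely the standard argument found there, so there is nothing to add beyond the minor remark that the $\sigma$-finiteness discussion for $E_\lambda$ is superfluous: since $f\in L^{p,\infty}_\mu$ by hypothesis, $\mu(E_\lambda)\le \|f\|_{L^{p,\infty}_\mu}^p\lambda^{-p}<\infty$ automatically.
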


\subsection{The truncation method or ``\emph{weak implies strong}"} \label{truncation}

We also include here a general ``\emph{weak implies strong}'' argument valid in our context of Poincar\'e type inequalities. The following lemma provides an argument to obtain strong estimates from weak type inequalities when the right hand side involves a gradient. It seems that goes to the work of Mazja, however it can be explicitly found in \cite{LN} in the context of $\mathbb{R}^n$  and in \cite{SW} in the context of Poincar\'e inequalities.
We refer to \cite{Ha} for more information about it.

\begin{lemma}\label{lem:weak-strong}
Let $g$ be any nonnegative Lipschitz function. Suppose that for some $p>1$ there is a weak $(1,p)$-type estimate for a pair of measures $\mu,\nu$ of the form:
\begin{equation*}
\sup_{t>0}\, t \, \mu(\{x\in \mathbb{R}^n: g(x)>t\})^{1/p}\lesssim \int_{\mathbb{R}^n}|\nabla g(x)| d\nu
\end{equation*}
Then the strong estimate also holds, namely
\begin{equation*}
\|g\|_{L^p_\mu}\lesssim \int_{\mathbb{R}^n}|\nabla g(x)| d\nu
\end{equation*}
\end{lemma}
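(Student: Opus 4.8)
The plan is to run the classical \emph{truncation} argument (the ``weak implies strong'' device going back to Maz'ya; cf.\ \cite{LN,SW}), exploiting that vertical truncations of $g$ are again admissible test functions. We may of course assume $\int_{\mathbb{R}^n}|\nabla g|\,d\nu<\infty$, as otherwise there is nothing to prove. For $k\in\mathbb{Z}$ let $T_k(s):=\min\{(s-2^k)_+,\,2^k\}$, a $1$-Lipschitz nondecreasing function fixing $0$, and set $g_k:=T_k\circ g$. Then each $g_k$ is a nonnegative Lipschitz function, so the weak hypothesis applies to it; moreover, by the chain rule for Lipschitz maps together with the standard fact that $\nabla g=0$ a.e.\ on every level set $\{g=c\}$, one has $|\nabla g_k|=|\nabla g|\,\chi_{E_k}$ a.e., where $E_k:=\{x:2^k<g(x)\le 2^{k+1}\}$. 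The annuli $E_k$ are pairwise disjoint and $\bigcup_k E_k=\{g>0\}$.

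The first step is a level-set bound. Since $g_k\equiv 2^k$ on $\{g>2^{k+1}\}$, we have $\{g>2^{k+1}\}\subset\{g_k>2^{k-1}\}$, whence applying the weak estimate to $g_k$ at the level $t=2^{k-1}$ gives
\[
2^{k-1}\,\mu\bigl(\{g>2^{k+1}\}\bigr)^{1/p}\ \le\ \sup_{t>0}t\,\mu(\{g_k>t\})^{1/p}\ \lesssim\ \int_{\mathbb{R}^n}|\nabla g_k|\,d\nu=\int_{E_k}|\nabla g|\,d\nu=:a_k .
\]
Relabelling (replace $k+1$ by $k$), this is equivalent to $2^{kp}\mu(\{g>2^k\})\lesssim a_{k-1}^p$ for every $k\in\mathbb{Z}$.

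Next I would assemble the $L^p$ norm from the dyadic pieces. Using $g\le 2^{k+1}$ on $E_k$,
\[
\|g\|_{L^p_\mu}^p=\sum_{k\in\mathbb{Z}}\int_{E_k}g^p\,d\mu\le 2^p\sum_{k}2^{kp}\mu(E_k)\le 2^p\sum_k 2^{kp}\mu(\{g>2^k\})\lesssim \sum_k a_{k-1}^p=\sum_k a_k^p .
\]
Since $p\ge1$ and the $a_k$ are nonnegative we may bound $\sum_k a_k^p\le\bigl(\sum_k a_k\bigr)^p$, and the disjointness of the $E_k$ gives
\[
\sum_k a_k=\sum_k\int_{E_k}|\nabla g|\,d\nu=\int_{\{g>0\}}|\nabla g|\,d\nu\le\int_{\mathbb{R}^n}|\nabla g|\,d\nu .
\]
Combining the last two displays yields $\|g\|_{L^p_\mu}\lesssim\int_{\mathbb{R}^n}|\nabla g|\,d\nu$, as claimed.

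The two points that deserve care, and which I would spell out, are the admissibility of the truncations $g_k$ as test functions in the hypothesis (automatic here, since a truncation of a nonnegative Lipschitz function is again nonnegative and Lipschitz) and the identity $|\nabla g_k|=|\nabla g|\,\chi_{E_k}$, which relies on the a.e.\ vanishing of $\nabla g$ on level sets so that the ``kinks'' of $T_k$ do not contribute. Beyond that the argument is pure bookkeeping of the dyadic decomposition, with the embedding $\ell^1\hookrightarrow\ell^p$ ($p\ge1$) being the only place the restriction on $p$ enters; indeed no hypothesis on $p$ beyond $p\ge1$ is actually used.
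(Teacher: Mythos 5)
Your argument is correct and follows essentially the same truncation strategy as the paper: the same dyadic truncations $T_{2^k}$, the same annuli $E_k=\{2^k<g\le 2^{k+1}\}$ on which $\nabla T_{2^k}(g)$ is supported, the same application of the weak estimate to the truncates, and the same final summation over the disjoint gradient integrals. The only cosmetic difference is that you work at the level of $p$-th powers and invoke $\ell^1\hookrightarrow\ell^p$ explicitly at the end, whereas the paper folds that step into the inequality $\|g\|_{L^p_\mu}\le\sum_k\|g\chi_{G_{k+1}}\|_{L^p_\mu}\le\sum_k 2^{k+2}\mu(G_{k+1})^{1/p}$; both are the same use of $p\ge 1$.
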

\begin{proof}
Define, for any real parameter $\lambda\in \mathbb R$, the truncation $T_\lambda(g)$ as follows:
\begin{equation*}
T_\lambda(g)(x):=
\left \{\begin{array}{cc}
0& \text{ if } g(x)\le \lambda\\
g(x) - \lambda & \text{ if } \lambda \le g(x)\le 2\lambda\\
\lambda & \text{ if } g(x)\ge 2\lambda\\
\end{array}
\right .
\end{equation*}
Also define for each $k\in \mathbb Z$ the set $G_k:=\{x\in \mathbb{R}^n: 2^k< |g(x)|\le 2^{k+1}\}$.
We have that, for all $x\in G_{k+1}$, $T_{2^k}(|g|)(x)=2^k$ and $\text{sop}\nabla(T_{2^k}(g))\subset G_k$. We proceed as follows:
\begin{eqnarray*}
\left (\int_{\mathbb{R}^n}g(x) d\mu\right )^{1/p} &\le &\sum_{k=-\infty}^{k=\infty} 2^k\mu(G_{k+1})^{1/p}\\
& \le  & \sum_{k=-\infty}^{k=\infty} 2^k\mu(x\in \mathbb{R}^n : T_{2^{k}}(g)(x)>2^{k-1})^{1/p} \\
& \lesssim  &\sum_{k=-\infty}^{k=\infty}  \int_{\mathbb{R}^n}|\nabla T_{2^{k}}(g)(x)| d\nu\\
& \le  & \sum_{k=-\infty}^{k=\infty}  \int_{G_k}|\nabla g (x)| d\nu\\
& \le  & \int_{\mathbb{R}^n}|\nabla g (x)| d\nu
\end{eqnarray*}

\end{proof}

The last result in this section is attached to the  $(1,1)$ Poincar\'e inequality for $L^1$:
\begin{equation}\label{eq:Poincare-L1}
\frac{1}{|Q|}\int_Q|f(x)-f_Q|dx\le C \ell(Q)\frac{1}{|Q|}\int_Q |\nabla f(x)| dx.
\end{equation}
This result is well known and can be found in many places: \cite{WZ}, \cite{Saloff-Coste-LMS-LN}. This estimate is also valid replacing the cube $Q$ by any convex set $\Omega\subset \mathbb{R}^n$ where the natural substitute for the sidelength of the cube is the diameter.  The proof of this result is well known but it has been shown in \cite{AD04} the very interesting fact that $\dfrac{1}{2}\text{diam}(\Omega)$ is the best constant. Interesting extensions of this result can be found in \cite{H} and in \cite{DD}.

We show next that  \eqref{eq:Poincare-L1}  encodes an intrinsic information by showing that it is equivalent to the following statements connecting $(p,p)$, weak or strong. Poincar\'e type inequalities, pointwise inequalities involving fractional operators and corresponding two-weighted estimates.

\begin{theorem}\label{thm:equiv-weak-strong-1n'-pointiwise} The following are equivalent. 

1) The following Poincar\'e inequality holds 

\begin{equation}\label{eq:Poincare-L1-fractional}
\avgint_Q|f(x)-f_Q|dx\le C \ell(Q)\avgint_Q |\nabla f(x)| dx,
\end{equation}
for any cube $Q$ with a constant $C$ not depending on the cube.

2) The following pointwise estimate holds
\begin{equation}\label{eq:pointwise-I1}
|f(x)-f_Q|\le C_n I_1(|\nabla f|\chi_Q)(x) \qquad \forall x\in Q,
\end{equation}
where $C_n$ depends on the dimension $n$.

3) If  $\mu$  is any Radon measure on $\mathbb{R}^n$, $n\ge2$,  $Q$ a cube, and $f$ a Lipschitz function, then
\begin{equation}\label{eq:weak-1n'}
\|(f-f_Q)\chi_Q\|_{L_{\mu}^{n',\infty}} \le C \int_Q |\nabla f(x)| (M\mu(x))^\frac{1}{n'}\ dx.
\end{equation} 

4) If  $\mu$  is any Radon measure on $\mathbb{R}^n$, $n\ge2$,  $Q$ a cube, and $f$  a Lipschitz function, then 

\begin{equation}\label{eq:P(1,n')-mu-local-avg} 
\left (\int_Q |f(x)-f_Q|^{n'}\ d\mu\right )^\frac{1}{n'}\le C \int_Q |\nabla f(y)| (M\mu(y))^\frac{1}{n'}\ dy.
\end{equation}

\end{theorem}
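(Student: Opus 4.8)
The plan is to prove the chain of implications $1)\Rightarrow 2)\Rightarrow 3)\Rightarrow 4)\Rightarrow 1)$, which is the most economical cycle. The implication $4)\Rightarrow 1)$ is essentially trivial: take $\mu$ to be Lebesgue measure restricted to $Q$, use that $M\mu(y)\le |Q|$-type bounds — more precisely $M(\chi_Q\,dx)(y)\lesssim 1$ for $y\in Q$ so $(M\mu(y))^{1/n'}$ is harmless — and Hölder on the left to pass from the $L^{n'}$ norm down to the $L^1$ average, recovering \eqref{eq:Poincare-L1-fractional} up to the dimensional constant $\ell(Q)^{n/n'}=\ell(Q)\cdot|Q|^{1/n'}/|Q|^{1/n'}$; one just has to track the normalizations carefully. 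Wait — more honestly: from $4)$ with $d\mu=\chi_Q\,dx$ one gets $\left(\int_Q|f-f_Q|^{n'}\right)^{1/n'}\lesssim \int_Q|\nabla f|(M(\chi_Q))^{1/n'}\lesssim \int_Q|\nabla f|$, and then Hölder (or rather Jensen, since $n'\ge 1$) on the left together with the scaling $\ell(Q)=|Q|^{1/n}$ gives \eqref{eq:Poincare-L1-fractional}; this is the routine part.

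The implication $1)\Rightarrow 2)$ is the classical pointwise representation: iterate \eqref{eq:Poincare-L1-fractional} on a chain of dyadic subcubes $Q=Q_0\supset Q_1\supset\cdots$ shrinking to a Lebesgue point $x$, writing $f(x)-f_Q=\sum_k (f_{Q_{k+1}}-f_{Q_k})$, bounding each difference by $\avgint_{Q_k}|f-f_{Q_k}|\lesssim \ell(Q_k)\avgint_{Q_k}|\nabla f|$, and then summing the geometric-type series $\sum_k \ell(Q_k)|Q_k|^{-1}\int_{Q_k}|\nabla f|$ against the kernel $|x-y|^{-(n-1)}$ to recognize $I_1(|\nabla f|\chi_Q)(x)$; this is standard (cf. the fractional-integral discussion opening the appendix) and I would cite it or do it in a few lines. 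The step $2)\Rightarrow 3)$ is where the measure $\mu$ enters: from the pointwise bound $|f(x)-f_Q|\le C_n I_1(|\nabla f|\chi_Q)(x)$ it suffices to show the weak-type bound $\|I_1 g\,\chi_Q\|_{L^{n',\infty}_\mu}\lesssim \int g\,(M\mu)^{1/n'}$ for $g\ge 0$. This is precisely the known weak-type $(1,n')$ estimate for the fractional integral $I_1$ with respect to an arbitrary measure, with the Fefferman–Stein-type control by $M\mu$ on the right; I would invoke it from the literature (Pérez-type fractional estimates) or reproduce the short duality argument using Lemma \ref{lem:nomability-WeakLp} to replace the weak norm by $\sup_E \mu(E)^{1/n'-1}\int_E|\cdots|d\mu$ and then Fubini plus the pointwise estimate $\int_E |x-y|^{-(n-1)}\,d\mu(x)\lesssim \mu(E)^{1/n'}(M\mu(y))^{1/n'}$, which is the heart of the matter.

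Finally $3)\Rightarrow 4)$ is exactly the truncation / \emph{weak-implies-strong} mechanism of Lemma \ref{lem:weak-strong}: apply it with $\nu = (M\mu)^{1/n'}\chi_Q\,dx$, $p=n'>1$, and the function $g=f-f_Q$ (after the usual reduction replacing $f_Q$ by an arbitrary constant, using that $T_\lambda$ kills additive constants on the relevant level sets), turning \eqref{eq:weak-1n'} into \eqref{eq:P(1,n')-mu-local-avg}. The main obstacle, and the only place requiring real work rather than bookkeeping, is the weak-type fractional estimate in $2)\Rightarrow 3)$ — controlling $\int_E|x-y|^{-(n-1)}d\mu(x)$ by $\mu(E)^{1/n'}(M\mu(y))^{1/n'}$ uniformly in $E$ and $y$; everything else is either classical chaining, trivial specialization, or an invocation of Lemma \ref{lem:weak-strong}.
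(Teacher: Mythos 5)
Your plan follows the same cycle $1)\Rightarrow 2)\Rightarrow 3)\Rightarrow 4)\Rightarrow 1)$ as the paper, with essentially the same arguments at each link: dyadic chaining for $1)\Rightarrow 2)$, Lemma~\ref{lem:nomability-WeakLp} plus Fubini plus a kernel estimate for $2)\Rightarrow 3)$, Lemma~\ref{lem:weak-strong} (weak implies strong) for $3)\Rightarrow 4)$, and specializing $\mu$ to Lebesgue measure with Jensen for $4)\Rightarrow 1)$.

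One small correction in $2)\Rightarrow 3)$: the kernel bound you propose, $\int_E |x-y|^{-(n-1)}\,d\mu(x)\lesssim \mu(E)^{1/n'}(M\mu(y))^{1/n'}$, has the wrong exponent on $\mu(E)$. Feeding it into $\vertiii{\cdot}_{L^{n',\infty}_\mu}=\sup_E \mu(E)^{1/n'-1}\int_E|\cdot|\,d\mu$ leaves a factor $\mu(E)^{2/n'-1}=\mu(E)^{1-2/n}$, which is not bounded for $n>2$. The correct exponent is $\mu(E)^{1/n}$ (that is, $1-1/n'$). The paper sidesteps this bookkeeping by computing $\|K(\cdot,y)\chi_Q\|_{L^{n',\infty}_\mu}\lesssim (M^c\mu(y))^{1/n'}$ directly from the definition of the weak norm (sub-level sets of $|x-y|^{1-n}$ are balls around $y$), which is a two-line computation — shorter than your framing of this step as the main obstacle suggests. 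Otherwise the plan is sound and matches the paper's proof.
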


\begin{corollary}\label{cor:P(1,n')-mu-local-avg}

Let  $\mu$  be any Radon measure on $\mathbb{R}^n$, $n\ge2$. Then there is a dimensional constant $C$ such that for an Lipschitz function $f$ with compact support, 
\begin{equation*}
\left (\int_{\mathbb{R}^n}  |f(x)|^{n'}\ d\mu\right )^\frac{1}{n'}\leq C\, \int_{\mathbb{R}^n}    |\nabla f(y)| (M\mu(y))^\frac{1}{n'}\ dy
\end{equation*} 
\end{corollary}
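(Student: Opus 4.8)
The plan is to derive this global inequality from the \emph{local} estimate \eqref{eq:P(1,n')-mu-local-avg} in part (4) of Theorem \ref{thm:equiv-weak-strong-1n'-pointiwise} by a simple exhaustion argument, using crucially that $f$, being Lipschitz with compact support, is integrable, so that its averages over large cubes decay to zero.

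First I would dispose of the trivial case where the right-hand side is infinite, and then fix an increasing sequence of cubes $\{Q_k\}$ with $\bigcup_k Q_k=\mathbb{R}^n$. Since $\operatorname{supp} f\subset Q_k$ for all large $k$, we have $\int_{Q_k} f\,dx=\int_{\mathbb{R}^n} f\,dx$, hence $f_{Q_k}=|Q_k|^{-1}\int_{\mathbb{R}^n} f\,dx\to 0$ as $k\to\infty$. Applying \eqref{eq:P(1,n')-mu-local-avg} on each $Q_k$, and using that $\nabla f$ vanishes a.e.\ outside $\operatorname{supp} f$ together with the fact that the constant $C$ there is dimensional, gives
$$
\left(\int_{Q_k}|f-f_{Q_k}|^{n'}\,d\mu\right)^{1/n'}\le C\int_{Q_k}|\nabla f|\,(M\mu)^{1/n'}\,dy\le C\int_{\mathbb{R}^n}|\nabla f|\,(M\mu)^{1/n'}\,dy ,
$$
where the last bound is independent of $k$.

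Next I would pass to the limit $k\to\infty$ on the left-hand side. For every $x$ one has $|f(x)-f_{Q_k}|\,\chi_{Q_k}(x)\to|f(x)|$ (if $f(x)\ne0$ then eventually $x\in Q_k$ and $f_{Q_k}\to0$; if $f(x)=0$ the limit is $0$ as well), so Fatou's lemma yields
$$
\int_{\mathbb{R}^n}|f|^{n'}\,d\mu\le\liminf_{k\to\infty}\int_{Q_k}|f-f_{Q_k}|^{n'}\,d\mu\le C^{n'}\left(\int_{\mathbb{R}^n}|\nabla f|\,(M\mu)^{1/n'}\,dy\right)^{n'},
$$
and taking $n'$-th roots gives the claim.

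There is no serious obstacle here; the only points requiring a little care are the vanishing of the averages $f_{Q_k}$ and the applicability of Fatou's lemma, both immediate from the compact support and integrability of $f$. (An alternative would be to first let $Q\nearrow\mathbb{R}^n$ in the pointwise inequality \eqref{eq:pointwise-I1} of part (2) to obtain $|f|\le C_n\,I_1(|\nabla f|)$ and then invoke a trace-type estimate for $I_1$; but that would need an auxiliary inequality not in the excerpt, so I would prefer the direct argument from part (4).)
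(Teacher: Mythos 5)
Your proof is correct and follows exactly the same route as the paper, which simply says the corollary "follows from \eqref{eq:P(1,n')-mu-local-avg} letting $\ell(Q)\to\infty$ using that $f_Q\to 0$." Your write-up merely spells out the exhaustion, the vanishing of the averages due to compact support, and the Fatou step that the paper leaves implicit.
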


The proof  follows from \eqref{eq:P(1,n')-mu-local-avg}   letting $\ell(Q)\to \infty$ using that $f_Q \to 0$.

\begin{remark} \label{fractional}

There is a corresponding fractional version replacing $1)$ above by 
$$
\avgint_Q|f(x)-f_Q|dx\le C \ell(Q)^\alpha\avgint_Q |\nabla f(x)| dx.
$$
The corresponding fractional operator is $I_\alpha$ in $2)$ instead of $I_1$ and the weak norm in $3)$ is in $L^{\frac{n}{n-\alpha},\infty}$ in $3)$. The implication $3)\Rightarrow 4)$ is a consequence of Lemma \ref{lem:weak-strong} and therefore is still valid.

\end{remark}

\begin{proof}

1) $\Longrightarrow $  2). 

We will adapt the main ideas from \cite{FLW} (see also \cite{LuPerez02}). 
We will derive  \eqref{eq:pointwise-I1} from \eqref{eq:Poincare-L1-fractional} by using the Lebesgue differentiation theorem. Let $x\in Q$. Then there is a chain $\{Q_k\}_{k\ge 1}$ of nested \emph{dyadic} subcubes of $Q$ such that  $Q_1=Q$, $Q_{k+1}\subset Q_k$ for all $k\ge 1$ and 
\begin{equation*}
\{x\}=\bigcap_{k\ge 1} Q_k
\end{equation*}
Let $f_{Q_k}$ be the average of $f$ over the cube $Q_k$. Then by the Lebesgue differentiation theorem, there exists a null set $N$ such that for all $x\in E:=Q\setminus N$ we have that 
\begin{equation*}
|f(x)-f_Q|=|\lim_{k\to\infty} f_{Q_k} - f_Q|=|\sum_{k\ge 1} f_{Q_{k+1}}- f_{Q_k}|
\end{equation*}
Now, using the dyadic structure of the chain, we obtain that 
\begin{eqnarray*}
|\sum_{k\ge 1} f_{Q_{k+1}}- f_{Q_k}| & \le & \sum_k \frac{1}{|Q_{k+1}|}\int_{Q_{k+1}}|f_{Q_k}-f|\\
& \le & 2^n \sum_k \frac{1}{|Q_{k}|}\int_{Q_{k}}|f_{Q_k}-f|\\
& \le & C2^n \sum_k \frac{\ell(Q_k)}{|Q_{k}|}\int_{Q_{k}}|\nabla f(y)| dy\\
& = & C2^n \int_{Q}|\nabla f(y)|\sum_k \ell(Q_k)^{1-n}\chi_{Q_k}(y) dy
\end{eqnarray*}

Note that the immediate estimate $|x-y|\le \sqrt{n}\ell(Q_k)$ produces an extra unwanted $\log$ factor when summing the series. We instead proceed as follows. Let $0<\eta< n-\alpha$. Then we have that 
\begin{eqnarray*}
\sum_k \ell(Q_k)^{1-n}\chi_{Q_k}(y) &\le & \frac{C_n}{|x-y|^{n-1-\eta}}\sum_k \ell(Q_k)^{-\eta}\chi_{Q_k}(y)\\
&\le &\frac{C_n}{|x-y|^{n-1-\eta}\ell(Q)^\eta}\sum_{k=0}^{k_0} 2^{k\eta}
\end{eqnarray*}
for $k_0=\min\{j\in \mathbb{N}: 2^{j}>\sqrt{n}\frac{\ell(Q)}{|x-y|}\}$. Then we obtain that

\begin{equation*}
\sum_k \ell(Q_k)^{1-n}\chi_{Q_k}(y) 
\le C_n
\frac{2^{\eta k_0}}{|x-y|^{n-1-\eta}\ell(Q)^\eta}
\le C_n \frac{1}{|x-y|^{n-1}}
\end{equation*}
Collecting all previous estimates, we conclude with the proof of the desired inequality
\begin{equation*}
|f(x)-f_Q|\le C_n \int_Q \frac{(|\nabla f|\chi_Q)(y)}{|x-y|^{n-1}}dy \qquad \forall x\in Q
\end{equation*}

2) $\Longrightarrow $  3).

We can compute the weak norm by using Lemma \ref{lem:nomability-WeakLp}. More precisely, we can apply Fubini-Tonelli's theorem and \eqref{eq:nomability-WeakLp} to obtain

\begin{eqnarray*}
\|(f-f_Q)\chi_Q\|_{L_{\mu}^{n',\infty}} &\lesssim & \|I_1(|\nabla f|\chi_Q)\chi_Q\|_{L_{\mu}^{n',\infty}}\\
& \lesssim & \vertiii{\int_{\mathbb{R}^n}\frac{|\nabla f(y)|\chi_Q(y)}{|\cdot-y|^{n-1}}\chi_Q(\cdot)\ dy}_{L_{\mu}^{n',\infty}}\\
& \lesssim & \int_{\mathbb{R}^n} \|K(\cdot,y)\chi_Q(\cdot)\|_{L_{\mu}^{n',\infty}}|\nabla f(y)| \chi_Q(y) \ dy\\
& \lesssim & \int_{Q} \|K(\cdot,y)\chi_Q(\cdot)\|_{L_{\mu}^{n',\infty}}|\nabla f(y)|\ dy.
\end{eqnarray*}

Now we estimate the inner norm of the kernel $K(x,y)=\frac{1}{|x-y|^{n-1}}$, $x,y\in Q$. By definition of the weak norm, we have
\begin{eqnarray*}
\|K(\cdot,y)\chi_Q\|_{L_{\mu}^{n',\infty}}&=&\sup_{t>0} \left ( t^{n'}\mu\left (x\in Q: \frac{1}{|x-y|^{n-1}}>t\right )\right )^\frac{1}{n'}\\
& \lesssim & \sup_{r>0} \left ( r^{-n}\mu\left (x\in Q: |x-y|<r \right )\right )^\frac{1}{n'}\\
& \lesssim & \sup_{r>0} \left ( |B(y,r)|^{-1} \mu\left ( B(y,r) \right )\right )^\frac{1}{n'}\\
& \lesssim & (M^c\mu (y))^\frac{1}{n'}
\end{eqnarray*}
Recall that $M^c$ denotes the centered maximal function.
Therefore, collecting all estimates, we obtain that

\begin{equation*}
\|(f-f_Q)\chi_Q\|_{L_{\mu}^{n',\infty}}\le C \int_Q |\nabla f(y)| (M\mu(y))^\frac{1}{n'}\ dy
\end{equation*} 

3) $\Longrightarrow $  4).  

This follows from Lemma \ref{lem:weak-strong}.

4) $\Longrightarrow $  1).  

This follows by considering as measure $\mu$ the Lebesgue measure. 

\end{proof}

\begin{remark} \label{caracterizacion}
We remark that we could obtain 2) directly from 4) by evaluating estimate \eqref{eq:P(1,n')-mu-local-avg} in a Dirac measure. In fact,  for any $x_0\in \mathbb{R}^n$
we let  $\mu=\delta_{x_0}$ be the Dirac measure concentrated at $x_0$. Then an easy computation of the maximal function $M\mu  $
using that $M\mu(x) \approx |x-x_0|^{-n}$ yields that for any $Q\ni x_0$,
\begin{equation*}
\int_Q |\nabla f(y)| (M\mu(y))^\frac{1}{n'}\ dy\le I_1(|\nabla f|\chi_Q)(x_0).
\end{equation*} 
On the other hand, we also have that
\begin{equation*}
\|f-f_Q\|_{L_{\mu}^{n'}}=|f(x_0)-f_Q|.
\end{equation*} 
Then, using \eqref{eq:weak-1n'} we obtain that 
\begin{equation*}
|f(x_0)-f_Q| = \|f-f_Q\|_{L_{\mu}^{n'}}\le \int_Q |\nabla f(y)| (M\mu(y))^\frac{1}{n'}\ dy\le I_1(|\nabla f|\chi_Q)(x_0),
\end{equation*}
which is exactly \eqref{eq:pointwise-I1}.
\end{remark}

\subsection{Another proof of one weight and two weight Poincar\'e inequalities}
We present here a ``classical'' proof one and two weights Poincar\'e inequalities. The proofs follow from the representation in terms of the fractional integral from \eqref{eq:pointwise-I1}. 

\begin{proposition}\label{pro:P(p,p)-I1vsM}
Let $w\in A_p$. Then for any Lipschitz function $f$ and for any cube $Q$ we have that
\begin{equation*}
\left (\frac{1}{w(Q)}\int_Q |f-f_Q|^p\, w\ dx\right )^{1/p}\le [w]^{\frac{1}{p}}_{A_p}\ell(Q)\left (\frac{1}{w(Q)}\int_Q|\nabla f|^{p} \, w \ dx\right )^{1/p}
\end{equation*}
\end{proposition}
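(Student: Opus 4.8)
The plan is to deduce the inequality from the pointwise representation \eqref{eq:pointwise-I1}, combined with a standard estimate for the fractional integral, the weighted weak-type bound for the Hardy--Littlewood maximal function, and the truncation method. Throughout assume $1<p<\infty$; the case $p=1$ is sketched at the end.

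Since the unweighted $(1,1)$ Poincar\'e inequality \eqref{eq:Poincare-L1} holds, item $2)$ of Theorem \ref{thm:equiv-weak-strong-1n'-pointiwise} gives, for every cube $Q$ and every $x\in Q$,
$$
|f(x)-f_Q|\le C_n\,I_1\big(|\nabla f|\chi_Q\big)(x).
$$
I would then use the elementary pointwise bound $I_1(g\chi_Q)(x)\le C_n\,\ell(Q)\,M(g\chi_Q)(x)$ for $x\in Q$, valid for any $g\ge 0$: writing the integral defining $I_1(g\chi_Q)(x)$ as a sum over the dyadic annuli $A_k=\{y:2^{-k-1}\sqrt{n}\,\ell(Q)<|x-y|\le 2^{-k}\sqrt{n}\,\ell(Q)\}$, $k\ge 0$ (which cover $Q$ since $\mathrm{diam}(Q)=\sqrt n\,\ell(Q)$), estimating $|x-y|^{1-n}\le (2^{-k-1}\sqrt n\,\ell(Q))^{1-n}$ on $A_k$ and $\int_{A_k}g\le |B(x,2^{-k}\sqrt n\,\ell(Q))|\,M(g\chi_Q)(x)$, and summing the resulting geometric series in $k$. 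Combining the two estimates we obtain the Hedberg-type bound
$$
|f(x)-f_Q|\le C_n\,\ell(Q)\,M\big(|\nabla f|\chi_Q\big)(x),\qquad x\in Q .
$$

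Next I would pass to the weak norm. By \eqref{charactAp-two-weights} with $u=v=w$ (equivalently, a direct consequence of \eqref{eq:prop-Ap-function} and a Calder\'on--Zygmund covering argument) one has $\|M\|_{L^p(w)\to L^{p,\infty}(w)}\le c_n[w]_{A_p}^{1/p}$, so the previous display yields the weak Poincar\'e inequality
$$
\big\|(f-f_Q)\chi_Q\big\|_{L^{p,\infty}(w)}\le c_n\,[w]_{A_p}^{1/p}\,\ell(Q)\Big(\int_Q|\nabla f|^p\,w\,dx\Big)^{1/p}.
$$
To upgrade this to the strong estimate I would invoke the truncation argument of Lemma \ref{lem:weak-strong}: applying the weak inequality to the truncations $T_{2^k}(f-f_Q)$ (whose gradients are supported on the pairwise disjoint level sets $G_k=\{2^k<|f-f_Q|\le 2^{k+1}\}$) and summing, using $\sum_k a_k^{p}\le(\sum_k a_k)^{p}$, one arrives at
$$
\Big(\int_Q|f-f_Q|^p\,w\,dx\Big)^{1/p}\le c_n\,[w]_{A_p}^{1/p}\,\ell(Q)\Big(\int_Q|\nabla f|^p\,w\,dx\Big)^{1/p};
$$
dividing by $w(Q)^{1/p}$ gives the assertion (with the harmless dimensional factor absorbed into the stated constant). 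For $p=1$ no truncation is needed: from the Hedberg-type bound and the Fefferman--Stein inequality $\int M(g)\,w\,dx\le c_n\int|g|\,Mw\,dx$ together with $Mw\le[w]_{A_1}w$ one gets the estimate directly.

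The step I expect to require the most care is the last one. Lemma \ref{lem:weak-strong} is stated for a weak $(1,p)$ inequality with right-hand side $\int|\nabla g|\,d\nu$, while here the right-hand side is $\|\nabla f\|_{L^p(w)}$; moreover $T_{2^k}(f-f_Q)$ is not literally of the form $g-g_Q$. Both points are standard and minor: the summation in the proof of Lemma \ref{lem:weak-strong} goes through verbatim with $p$-th powers replacing the $\ell^1\hookrightarrow\ell^p$ step, and the average can be handled by truncating around a median of $f$ on $Q$, so that the weak inequality applies to each truncation with a uniform constant. It is also worth noting that this route, rather than composing the pointwise bound with the \emph{strong} $L^p(w)$ boundedness of $M$ (which would only give the exponent $1/(p-1)$), is exactly what produces the sharper exponent $1/p$ on $[w]_{A_p}$.
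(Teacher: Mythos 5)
Your argument follows essentially the same path as the paper's own proof of this proposition: starting from the pointwise representation $|f-f_Q|\le C_n I_1(|\nabla f|\chi_Q)$ of item 2) of Theorem~\ref{thm:equiv-weak-strong-1n'-pointiwise}, then the Hedberg-type bound $I_1(g\chi_Q)\lesssim \ell(Q)M(g\chi_Q)$ on $Q$ via dyadic annuli (this is precisely inequality \eqref{eq:I1-vs-M} in the paper), then the weighted weak $(p,p)$ bound for $M$ with constant $[w]_{A_p}^{1/p}$, and finally the truncation argument. The one place where you are \emph{more} careful than the paper is the last step: the paper simply writes ``the \emph{weak implies strong} argument finishes the proof,'' while you correctly point out that Lemma~\ref{lem:weak-strong} as stated has an $L^1(\nu)$ right-hand side and is formulated for plain nonnegative Lipschitz $g$ rather than for $f-f_Q$, so one has to (i) replace the $\ell^1$-summation by the trivial $\ell^p$-summation of the disjointly supported gradients of the truncations, and (ii) deal with the centering constant, for instance by truncating $|f-m_Q(f)|$ around a median so that each truncation vanishes on a set of at least half the measure of $Q$ (exactly the device the paper itself uses in Lemma~\ref{lem:vanish} and in the proof of Theorem~\ref{thm:PoincareSobolev-2weights}). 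Those are the right fixes and the argument closes. Your closing remark that passing through the weak $(p,p)$ bound for $M$, rather than its strong bound, is what yields the exponent $1/p$ on $[w]_{A_p}$ is also accurate and worth keeping.
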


\begin{proof}
The idea is to control the fractional integral $I_1$ by the maximal function at the level of weak norms to obtain the precise weighted estimate. Then, the weak implies strong strategy concludes the proof.
More precisely, we have from \eqref{eq:pointwise-I1} that 
$$
|f(x)-f_Q|\le C_n I_1(|\nabla f|\chi_Q)(x) \qquad \forall x\in Q,
$$
where we assume that the cube $Q$ is of the form $Q=Q(z,r)$, a cube centered at the point $z$ and sidelength $\ell(Q)=2r$. Then, for any $x\in Q$, we have that $Q\subset Q(x,2r)$. We now decompose the cube $Q(x,2r)$ in annular regions of the form $Q_k=Q(x,2^{-k+1}r)\setminus Q(x,2^{-k}r)$, $k\ge 0$. Then,
\begin{eqnarray*}
\int_Q \frac{|\nabla f|(y)}{|x-y|^{n-1}} dy& \le & \int_{Q(x,2r)} \frac{|\nabla f|(y)}{|x-y|^{n-1}}dy\\
& \le & \sum_k \int_{Q_k}\frac{|\nabla f|(y)}{|x-y|^{n-1}}dy\\
& \le & C \sum_k \frac{2^{-k}r}{(2^kr)^{n}}\int_{Q_k}|\nabla f|(y)dy\\
&\le & C M(|\nabla f|)(x)\sum_k 2^{-k}r\\
& \le & C \ell (Q) M(|\nabla f|)(x).
\end{eqnarray*}
Therefore we conclude that 
\begin{equation}\label{eq:I1-vs-M}
I_1(|\nabla f|\chi_Q)(x)\le C \ell (Q) M(|\nabla f|)(x)
\end{equation}
for any $x\in Q$. Now we compute the weak $L^p$ norm to obtain
\begin{eqnarray*}
\|f-f_Q\|_{L^{p,\infty}_{Q,w}} & \le & \|  I_1(|\nabla f|\chi_Q)\|_{L^{p,\infty}_{Q,w}}\\
& \le & \ell(Q) \|  M(|\nabla f|)\|_{L^{p,\infty}_{Q,w}}\\
& \le & \ell(Q) [w]_{A_p}^{\frac{1}{p}}\|\nabla f\|_{L_Q^p(w)}
\end{eqnarray*}
by the known weighted estimate for the weak norm for the maximal function. The \emph{weak implies strong} argument finishes the proof.
\end{proof}

We have the following analogue of Proposition \ref{pro:P(p,p)-I1vsM} for pairs of weights.

\begin{proposition}\label{pro:two-weights}
Let $(u,v) \in A_p$. Then the following Poincar\'e $(p,p)$ inequality holds for any Lipschitz function $f$.
$$
\left( \int_Q|f-f_{Q}|^{p}u\right )^\frac{1}{p}\le C [u,v]^{1/p}_{A_p}\, \ell(Q)  \left (\int_Q |\nabla f|^p v\right )^\frac{1}{p}.
$$
\end{proposition}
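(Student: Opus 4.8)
The plan is to reproduce, \emph{mutatis mutandis}, the argument of Proposition~\ref{pro:P(p,p)-I1vsM}, simply replacing the one-weight weak bound for the Hardy--Littlewood maximal operator by its two-weight counterpart \eqref{charactAp-two-weights}. The starting point is the pointwise representation \eqref{eq:pointwise-I1} from Theorem~\ref{thm:equiv-weak-strong-1n'-pointiwise}, which is itself a consequence of the classical $(1,1)$ Poincar\'e inequality \eqref{eq:Poincare-L1}: for every cube $Q$, every Lipschitz $f$ and every $x\in Q$,
$$
|f(x)-f_Q|\le C_n\, I_1(|\nabla f|\chi_Q)(x).
$$

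First I would feed this into the elementary domination of the truncated Riesz potential by the maximal function established in \eqref{eq:I1-vs-M}, namely $I_1(|\nabla f|\chi_Q)(x)\le C\,\ell(Q)\,M(|\nabla f|\chi_Q)(x)$ for $x\in Q$, obtaining $|f(x)-f_Q|\le C\,\ell(Q)\,M(|\nabla f|\chi_Q)(x)$ on $Q$. Taking $L^{p,\infty}(u\,dx)$ quasi-norms over $Q$ and using $(u,v)\in A_p$ through \eqref{charactAp-two-weights} then yields the \emph{weak} two-weight Poincar\'e inequality
$$
\big\|(f-f_Q)\chi_Q\big\|_{L^{p,\infty}(u\,dx)}\;\le\; C\,\ell(Q)\,\|M\|_{L^p(v)\to L^{p,\infty}(u)}\Big(\int_Q|\nabla f|^p v\,dx\Big)^{1/p}\;\le\; C\,[u,v]_{A_p}^{1/p}\,\ell(Q)\Big(\int_Q|\nabla f|^p v\,dx\Big)^{1/p},
$$
and, applied to an arbitrary Lipschitz function in place of $f$, this is precisely the weak form of the claimed estimate.

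It remains to upgrade this to the strong $L^p(u)$ estimate by the truncation (``weak implies strong'') method, i.e.\ by Lemma~\ref{lem:weak-strong} with $d\mu=u\chi_Q\,dx$ and $d\nu=v\chi_Q\,dx$: one applies the weak inequality to the dyadic truncations $T_{2^k}(|f-f_Q|)$, uses that $|\nabla T_{2^k}(|f-f_Q|)|$ is supported in the annulus $\{2^{k}<|f-f_Q|\le 2^{k+1}\}$, and sums over $k\in\mathbb{Z}$ exploiting the disjointness of these level sets to recover the full $L^p(v)$ norm of $\nabla f$ on the right. I expect this final passage to be the only genuinely delicate point, because the right-hand side here carries an $L^p$ norm of the gradient rather than the $L^1$-type quantity appearing in the statement of Lemma~\ref{lem:weak-strong}; one therefore has to run the $L^p$-variant of that truncation scheme, controlling the oscillation of each truncated function by a fixed multiple of $2^k$ so that the average subtracted in the weak inequality is harmless (handling the positive and negative parts of $f-f_Q$ separately if the mean of a truncation is too large). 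The remaining points---reduction to the case of a finite right-hand side, finiteness of $I_1(|\nabla f|\chi_Q)$ for Lipschitz $f$, and the harmless interchange of $M$ and $M^c$---are routine.
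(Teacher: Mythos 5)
Your proposal reproduces the paper's proof essentially verbatim: the paper likewise combines the pointwise bound $|f(x)-f_Q|\le C_n\,\ell(Q)\,M(|\nabla f|\chi_Q)(x)$ (from \eqref{eq:pointwise-I1} and \eqref{eq:I1-vs-M}) with $\|M\|_{L^p(v)\to L^{p,\infty}(u)}\approx[u,v]_{A_p}^{1/p}$ to obtain the weak estimate, and then invokes the truncation method to upgrade to the strong norm. The subtlety you flag in the last paragraph is genuine but is shared by the paper's own wording: Lemma~\ref{lem:weak-strong} is stated with an $L^1(\nu)$-type right-hand side, and its adaptation to the present $L^p(v)$ right-hand side (which works because of the disjointness of the level sets $G_k$ after raising both sides to the power $p$, together with the positive/negative-part reduction to control the subtracted mean) is left implicit there too, exactly as in Proposition~\ref{pro:P(p,p)-I1vsM}.
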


\begin{proof}
The proof is the same as in Proposition \ref{pro:P(p,p)-I1vsM}. We can use the pointwise inequality 
\eqref{eq:pointwise-I1} together with \eqref{eq:I1-vs-M} to obtain
\begin{equation*}
|f(x)-f_Q|\le C_n\ell(Q) M(|\nabla f|\chi_Q)(x) \qquad x\in Q.
\end{equation*}
Taking weighted $L^p$ norms we obtain that
\begin{eqnarray*}
\left\| f-f_{Q}\right \|_{L^{p,\infty}_{Q,u}}
& \le & C_n\ell(Q) \left\| M(|\nabla f|\chi_Q)\right \|_{L^{p,\infty}_{Q,u}}\\
& \le  & \|M\|_{L^p(u)\to L^{p,\infty}(v)}\left( \int_Q|\nabla f|^{p}\,v\right )^\frac{1}{p}\\
& \le &  [u,v]^{1/p}_{A_p}\left( \int_Q|\nabla f|^{p} \,v\right )^\frac{1}{p},
\end{eqnarray*}
since by \eqref{charactAp-two-weights} 
$$\|M\|_{L^p(v)\to L^{p,\infty}(u)} \approx  [u,v]^{1/p}_{A_p}.
$$
Now, the \emph{weak implies strong } argument finishes the proof.
\end{proof}

By assuming an extra condition on the weight $u$, we have another result.

\begin{corollary}
Let $(u,v)\in A_p$,  $u\in A_p$,  $1 \leq p<n$, and $p_w^*$ satisfying 
\begin{equation}
\frac{1}{p} -\frac{1}{ p_w^* }=\frac{1}{n(p+\log [w]_{A_p})}.
\end{equation}
Then the following two-weight Poincar\'e-Sobolev $(p^*,p)$,inequality holds
$$
\left (\frac{1}{u(Q)}\int_Q |f-f_Q|^{p_w^*}\, u\ dx\right )^{\frac{1}{p_w^*}} \leq [u,v]^{\frac{1}{p}}_{A_p}\ell(Q)\left (\frac{1}{u(Q)}\int_Q|\nabla f|^{p} \, v \ dx\right )^{\frac{1}{p}}.
$$
\end{corollary}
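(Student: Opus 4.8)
The plan is to reduce this two-weight statement to Theorem~\ref{thm:ptimes-Aq}, in exactly the way Corollary~\ref{cor:ptimes-Aq} was deduced from it; the only change is that the role of the weight in the $A_q$ hypothesis of Theorem~\ref{thm:ptimes-Aq} is played by $u$ and we take the endpoint $q=p$. (Here the weight denoted $w$ in the defining relation for $p_w^*$ should be read as $u$; we keep the paper's notation.)

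First I would start from the unweighted $(1,1)$ Poincar\'e inequality $\avgint_Q|f-f_Q|\,dx\le c_n\,\ell(Q)\avgint_Q|\nabla f|\,dx$ (Theorem~\ref{thm:equiv-weak-strong-1n'-pointiwise}) and apply the two-weight $A_p$ testing inequality \eqref{two weight Ap charact} to the function $|\nabla f|$, obtaining
$$\frac{1}{|Q|}\int_Q|f-f_Q|\,dx\le c_n\,[u,v]_{A_p}^{1/p}\,\ell(Q)\Big(\frac{1}{u(Q)}\int_Q|\nabla f|^p v\,dx\Big)^{1/p}=:a(Q).$$
Next I would observe that $a$ is a functional of the admissible form appearing in Theorem~\ref{thm:ptimes-Aq}, namely $a(Q)=\ell(Q)\big(\tfrac{1}{u(Q)}\mu(Q)\big)^{1/p}$ for the Radon measure $d\mu=c_n^p[u,v]_{A_p}\,|\nabla f|^p v\,dx$, and that the hypothesis $u\in A_p$ is precisely the $A_q$ hypothesis of that theorem with $q=p$ (the endpoint $q=p$ being permitted there).

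Then I would invoke Theorem~\ref{thm:ptimes-Aq} directly: with $q=p$ it produces the Sobolev index $p_w^*$ determined by $\tfrac1p-\tfrac1{p_w^*}=\tfrac1{n(p+\log[u]_{A_p})}$, together with the conclusion
$$\Big(\frac{1}{u(Q)}\int_Q|f-f_Q|^{p_w^*}u\,dx\Big)^{1/p_w^*}\le C\,a(Q)=C\,[u,v]_{A_p}^{1/p}\,\ell(Q)\Big(\frac{1}{u(Q)}\int_Q|\nabla f|^p v\,dx\Big)^{1/p},$$
which is the claimed inequality (with the dimensional constant absorbed, as in Corollary~\ref{cor:ptimes-Aq}).

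There is no genuine obstacle here beyond bookkeeping; the one point deserving a word of care is that $f$ must be regular enough (Lipschitz) for the starting $(1,1)$ inequality to make sense and for $|\nabla f|^p v\,dx$ to be a finite Radon measure on $Q$, and that the scalar factor $[u,v]_{A_p}^{1/p}$ is harmless because it is folded into $\mu$. If instead one wanted the sharper index coming from the good-$\lambda$ method, one could alternatively feed $a$ into Theorem~\ref{thm:goodL-aQ} using Lemmas~\ref{lem:L-small} and \ref{lem:new-smallAp} (again with $u$ in the $A_q$ role), but that route would introduce extra powers of $[u]_{A_p}$ in front, so the statement exactly as written follows most cleanly from Theorem~\ref{thm:ptimes-Aq}.
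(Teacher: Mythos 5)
Your proof is correct and follows the same route as the paper: the paper's own proof is the terse remark that the result ``follows from Lemma~\ref{lem:smallAq} and Theorem~\ref{thm:Lp(w)-a(Q)-clean} as in the proof of Corollary~\ref{cor:ptimes-Aq}'', and what you have done is unpack that instruction exactly --- build $a(Q)$ from the unweighted $(1,1)$ Poincar\'e inequality plus \eqref{two weight Ap charact}, then invoke Theorem~\ref{thm:ptimes-Aq} (which is itself the packaging of Lemma~\ref{lem:smallAq} and Theorem~\ref{thm:Lp(w)-a(Q)-clean}) with $q=p$ and with $u$ in the role of $w$. Your side remarks --- reading $w$ as $u$ in the formula for $p_w^*$, folding the scalar $[u,v]_{A_p}^{1/p}$ into the Radon measure $\mu$, and noting that $u\in A_p\subset A_\infty$ so the hypotheses of the underlying theorem are met --- are all accurate and take care of the only genuine bookkeeping in the argument.
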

\begin{proof}
The result follows from Lemma \ref{lem:smallAq} and Theorem \ref{thm:Lp(w)-a(Q)-clean} as in the proof of Corollary \ref{cor:ptimes-Aq}.
\end{proof}

\section{Acknowledgments}
This work started during a visit of the second author to the Basque Center for Applied Mathematics (BCAM) in Bilbao, Spain. The second author is deeply grateful to all the staff for their hospitality. Both authors thank Ricardo Dur\'an from UBA for some ideas regarding the proof of Theorem \ref{thm:PoincareSobolev-2weights}.

\bibliographystyle{amsalpha}

\end{document}